\documentclass[12pt,bezier]{article}
\usepackage{times}
\usepackage{booktabs}
\usepackage{pifont}
\usepackage{floatrow}
\usepackage{caption}
\usepackage{mathrsfs}
\usepackage[fleqn]{amsmath}
\usepackage{amsfonts,amsthm,amssymb,mathrsfs,bbding}
\usepackage{txfonts}
\usepackage{graphics,multicol}
\usepackage{graphicx}
\usepackage{color}
\usepackage{amssymb}
\usepackage{caption}
\usepackage{cite}
\usepackage{latexsym,bm}
\usepackage{indentfirst}
\usepackage{color}
\usepackage[colorlinks=true,anchorcolor=blue,filecolor=blue,linkcolor=blue,urlcolor=blue,citecolor=blue]{hyperref}
\usepackage{extarrows}
\usepackage{cite}
\usepackage{latexsym,bm}
\usepackage{mathtools}
\evensidemargin=\oddsidemargin\topmargin=-1.5cm

\newtheorem{thm}{Theorem}[section]
\newtheorem{case}{Case}
\newtheorem{subcase}{Case}[case]
\newtheorem{prob}{Problem}[section]
\newtheorem{claim}{Claim}

\newtheorem{lemma}{Lemma}[section]
\newtheorem{cor}{Corollary}[section]

\newtheorem{observation}{Observation}[section]

\theoremstyle{definition}

\addtocounter{section}{0}

\begin{document}
\title{Spectral radius and rainbow Hamiltonicity in bipartite graphs\footnote{Supported by the National Natural Science Foundation of China
{(No. 12371361)} and Distinguished Youth Foundation of Henan Province {(No. 242300421045)}.}}
\author{\bf  Meng Chen$^{a}$, {\bf Ruifang Liu$^{a}$}\thanks{Corresponding author.
E-mail addresses: cm4234635@163.com (M. Chen), rfliu@zzu.edu.cn (R. Liu), yuanqixuan6@gmail.com (Q. Yuan).}, \bf Qixuan Yuan$^{a}$\\
{\footnotesize $^a$ School of Mathematics and Statistics, Zhengzhou University, Zhengzhou, Henan 450001, China} }
\date{}

\maketitle
{\flushleft\large\bf Abstract}
Let $\mathcal{G}=\{G_1, G_2, \ldots , G_k\}$ be a family of bipartite graphs  on the same vertex set. A rainbow Hamilton path (cycle) in $\mathcal{G}$ is a path (cycle) that visits each vertex precisely once such that any two edges belong to different graphs of $\mathcal{G}.$ In this paper, by adopting the technique of bi-shifting, we present tight sufficient conditions in terms of the spectral radius for a family $\mathcal{G}$ to admit a rainbow Hamilton path and cycle, respectively. Meanwhile, we completely characterize the corresponding spectral extremal graphs.

\begin{flushleft}
\textbf{Keywords:} Rainbow Hamilton path, Rainbow Hamilton cycle, Bipartite graph, Spectral radius, Extremal graphs
\end{flushleft}
\textbf{AMS Classification:} 05C50; 05C35

\section{Introduction}
Throughout this paper, we only consider simple and undirected graphs. Let $V(G)$ and $E(G)$ be the set of vertices and edges of $G,$ respectively. For any vertex $u \in V(G),$ we denote by $N_G(u)$ the neighborhood of $u$ in $G.$ Let $G$ be a bipartite graph with bipartitions $(X, Y).$ Denote by $\widehat{G}$ its quasi-complement, where $V(\widehat{G})=V(G)$ and for any $x\in X$ and $y\in Y,$ $xy\in E(\widehat{G})$ if and only if $xy\notin E(G).$ Let $G_1$ and $G_2$ be two bipartite graphs with the bipartition $(X_1, Y_1)$ and $(X_2, Y_2),$ respectively. We use $G_1\sqcup G_2$ to denote the bipartite graph obtained from $G_1 \cup G_2$ by adding all possible edges between $X_1$ and $Y_2$ and all possible edges between $Y_1$ and $X_2.$

The adjacency matrix of $G$ is the $|V(G)|\times|V(G)|$ matrix $A(G)=(a_{uv}),$ where $a_{uv}$ is the number of edges joining $u$ and $v.$ The eigenvalues of $G$ are defined as the eigenvalues of its adjacency matrix $A(G).$ The maximum modulus of eigenvalues of $A(G)$ is called the spectral radius of $G$ and denoted by $\rho(G).$

Let $\mathcal{G}= \{G_1, G_2,\ldots, G_k\}$ be a family of not necessarily distinct graphs with the same vertex
set $V.$ Let $H$ be a graph with $k$ edges on the vertex set $V(H)\subseteq V.$ We say that $\mathcal{G}$
contains a rainbow copy of $H$ if there exists a bijection $\phi: E(H) \rightarrow[k]$ such that $e \in E(G_{\phi(e)})$ for all $e\in E(H),$ where $[k]=\{1, 2, \ldots, k\}.$ In other words, each edge of $H$ comes from a different graph $G_i$. In particular, if
$G_1=G_2=\cdots=G_k$, then the rainbow copy of H reduces to the classical copy of $H.$ The following general question was proposed by Joos and Kim in \cite{joos2020rainbow}.
\begin{prob}\label{prob1.1}
Let $H$ be a graph with $k$ edges and $\mathcal{G}= \{G_1, G_2,\ldots, G_k\}$ be a family of not necessarily distinct graphs on the same vertex set $V.$ Which properties and constraints imposed on the family $\mathcal{G}$ can yield a rainbow copy of $H$?
\end{prob}
Focusing on Problem \ref{prob1.1}, Guo et al.\cite{guo2023spectral} provided a sufficient condition in terms of the spectral radius for the existence of a rainbow matching in a family of graphs. He, Li and Feng\cite{he2024spectral} proposed a spectral radius condition for a family of graphs to admit a rainbow Hamilton path. Moreover, they also posed a spectral radius condition on a family of graphs to guarantee a rainbow linear forest of given size. Zhang and van Dam\cite{zhang2025} gave a sufficient condition based on the size and the spectral radius for the existence of a rainbow Hamilton cycle in a family of graphs, respectively. Zhang and Zhang\cite{zhangzhang2025} proposed a sufficient condition in terms of the spectral radius for a family of graphs to contain a rainbow $k$-factor.

 The bipartite version of Problem \ref{prob1.1} is also a natural and intriguing problem. Bradshaw\cite{Bradshaw2021} proposed  minimum degree conditions for a family of bipartite graphs to admit a rainbow Hamilton cycle and a rainbow perfect matching, respectively. Furthermore, Bradshaw\cite{Bradshaw2021} proved a stronger result which states that a family of bipartite graphs is bipancyclic under the minimum degree condition. Motivated by \cite{Bradshaw2021}, Hu et al.\cite{Hu2024} investigated the minimum degree condition of vertex-bipancyclicity in a family of bipartite graphs. Shi, Li and Chen\cite{Shi2024} provided a sufficient condition in terms of the spectral radius for the existence of a rainbow matching in a family of bipartite graphs.

A bipartite graph with the bipartition $(X, Y)$ is called balanced if $|X|=|Y|.$ Li and Ning\cite{li2017spectral} provided a spectral radius condition for a balanced bipartite graph with given minimum degree to contain a Hamilton path.
Denote $Q^k_n =K_{k, n-k-1}\sqcup \widehat{K_{n-k, k+1}}$ and $R^k_n =K_{k, k}\sqcup \widehat{K_{n-k, n-k}}.$
\begin{thm}[Li and Ning\cite{li2017spectral}]\label{thm1.1}
Let $G$ be a balanced bipartite graph on $2n$ vertices with minimum degree $\delta(G) \geq k,$ where $k \geq 0$ and $n \geq (k+2)^2.$\\
(i) If $k\neq 1$ and $\rho(G)\geq \rho(Q^k_n),$ then $G$ admits a Hamilton path unless $G\cong Q^k_n.$ \\
(ii) If $k = 1$ and $\rho(G)\geq \rho(R^k_n),$ then $G$ admits a Hamilton path unless $G\cong R^k_n.$
\end{thm}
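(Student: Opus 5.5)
We would follow the classical route of Li and Ning, combining a closure argument with an edge-count/spectral bound. First we would fix the bipartite closure tool of Moon and Moser. For a balanced bipartite graph on $2n$ vertices, repeatedly join a nonadjacent pair $x\in X$, $y\in Y$ whenever $d(x)+d(y)\ge n$ for cycles, or whenever $d(x)+d(y)\ge n-1$ for paths. This operation preserves the property of having (respectively lacking) a Hamilton path. Hence we may replace $G$ by its path-closure $\mathrm{cl}(G)$. Adding edges only increases $\rho$ and keeps $\delta\ge k$, so we may assume $G$ is closed and non-traceable, and aim to show $G\cong Q^k_n$ (or $R^1_n$ when $k=1$).

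Second, we would derive a structural degree-sequence condition. The bipartite analogue of Chvátal's theorem for Hamilton paths says the following: if a closed balanced bipartite graph is non-traceable, then there is some $s$ with $k\le s\le (n-1)/2$ (roughly) for which at least $s+1$ vertices of $X$ have degree at most $s$ and at least $s+1$ vertices of $Y$ (or the symmetric pair) have degree at most $n-s-1$. In the extremal configuration, $G$ is a subgraph of the graph $K_{s,n-s-1}\sqcup\widehat{K_{n-s,s+1}}$ or of a closely related one. Next we bound $\rho$ through edges, using $\rho(G)\le\sqrt{e(G)}$ for bipartite graphs or, more sharply, Hong-type bounds with minimum degree such as $\rho\le \sqrt{e-\delta(n-\delta)}$-style estimates for bipartite graphs. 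The edge count is $e(G)\le n(n-s-1)+s(s+1)$-type. For $s\ge k+1$, this gives $\rho(G)<\rho(Q^k_n)$ once $n\ge (k+2)^2$. Meanwhile $\rho(Q^k_n)\ge\sqrt{e(K_{k+?,\ldots})}$ can be lower-bounded by a large complete bipartite subgraph, namely $\rho(Q^k_n)>\sqrt{n(n-k-1)}$ approximately.

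Third, we would handle the boundary case $s=k$. There $G$ is a spanning subgraph of $Q^k_n$, and strict monotonicity of $\rho$ under adding edges for connected graphs forces $G=Q^k_n$. When $k=1$ the comparison flips: $R^1_n$ has the larger spectral radius among the two candidate extremal families. We would verify this by computing the characteristic polynomials of the equitable quotient matrices (size $4\times4$) of $Q^k_n$ and $R^k_n$ and comparing their largest roots. For $k=0$ we would also check that the disconnected candidates are dominated.

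We expect the main obstacle to be the quantitative comparison in step two. We must show that every candidate with $s\ge k+1$ has spectral radius strictly below $\rho(Q^k_n)$ (or $\rho(R^1_n)$), uniformly for $n\ge (k+2)^2$. Here we would first try the quotient-matrix computation for $K_{s,n-s-1}\sqcup\widehat{K_{n-s,s+1}}$ as a function of $s$, and show its largest eigenvalue is decreasing in $s$ on the relevant range. If that fails, we would fall back on the crude bound $\rho^2\le e(G)$ against the lower bound $\rho(Q^k_n)^2\ge n(n-k-1)$.
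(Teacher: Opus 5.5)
The paper does not prove this statement; it is quoted from Li and Ning, so I am judging your outline on its own. The architecture is sensible: reduce to a closed non-traceable graph, describe such graphs structurally, then play $\rho(G)^2\le e(G)$ against $\rho(Q^k_n)^2\ge n(n-k-1)$ (as $Q^k_n$ contains $K_{n,n-k-1}$). However, the first two load-bearing steps fail as written.

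Your path-closure threshold $d(x)+d(y)\ge n-1$ does not preserve non-traceability. In $Q^0_n=K_{n,n-1}\cup K_1$, the isolated vertex $y_0\in Y$ and any $x\in X$ have degree sum $n-1$, and $Q^0_n+xy_0$ is traceable. So your closure destroys exactly the extremal graph you want to find. Even threshold $n$ fails: in $K_{n-1,n-1}\cup K_{1,1}$ every nonadjacent pair $x\in X$, $y\in Y$ has $d(x)+d(y)=n$, yet adding any such edge creates a Hamilton path. In balanced bipartite graphs the correct stability threshold for traceability is $n+1$, the same as for Hamiltonicity. Your cycle threshold $n$ is also off by one, as $B^1_n$ shows.

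The more serious gap is Step 2. For $k\ge 1$, the graphs $K_{n-k,n-k}\cup K_{k,k}$ are closed even at threshold $n+1$, have $\delta=k$, and are non-traceable. They are not subgraphs of any $K_{s,n-s-1}\sqcup\widehat{K_{n-s,s+1}}$ with $k\le s\le (n-1)/2$. Your degree condition, with $s+1$ instead of $n-s$ low-degree vertices on the second side, would not force containment in such a graph anyway.

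This second family is exactly why $k=1$ is exceptional. We have $\rho(K_{n-k,n-k}\cup K_{k,k})=n-k$, and $(n-k)^2<n(n-k-1)$ iff $(k-1)n>k^2$. That holds for $k\ge 2$ and $n\ge (k+2)^2$, but fails for $k=1$. Indeed, $\rho(Q^1_n)^2$ is the largest root of $\lambda^2-(n^2-2n+2)\lambda+2(n-1)(n-2)$. This quadratic takes the value $(n-1)(n-3)>0$ at $\lambda=(n-1)^2$, a point to the right of its vertex, so $\rho(Q^1_n)<n-1$. Since your Step 2 never produces this family, the ``flip'' in Step 3 has nothing behind it.

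Worse, read $R^1_n=K_{1,1}\sqcup\widehat{K_{n-1,n-1}}$ literally as printed. Then $R^1_n$ is a double star with $\rho=(1+\sqrt{4n-3})/2<\sqrt{n(n-2)}\le\rho(Q^1_n)$. Your quotient-matrix comparison would come out the wrong way. Moreover, $Q^1_n$ is then a counterexample to (ii) as printed: it has $\delta=1$ and two pendant vertices in $Y$ sharing a neighbour, so it is non-traceable. The graph that (ii) must mean is $K_{n-1,n-1}\cup K_{1,1}$.

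What your argument needs is an Erd\H{o}s/Moon--Moser-type edge-stability lemma for non-traceable balanced bipartite graphs with $\delta\ge k$ that lists both extremal families. With that in hand, $\rho(G)^2\le e(G)$ and the comparisons above (including the equality cases) close the argument.
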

By Theorem \ref{thm1.1}, one can obtain the following result.
\begin{cor}[Li and Ning\cite{li2017spectral}]\label{cor1.1}
Let $G$ be a balanced bipartite graph on $2n$ vertices with $n \geq 4.$ If
\begin{eqnarray*}
\rho(G)\geq \rho(K_{n,n-1}\cup K_1),
\end{eqnarray*}
 then $G$ admits a Hamilton path unless $G\cong K_{n,n-1}\cup K_1.$
\end{cor}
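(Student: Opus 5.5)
The plan is to obtain the statement as the special case $k=0$ of Theorem \ref{thm1.1}(i). With $k=0$ the hypotheses of Theorem \ref{thm1.1} hold automatically: the minimum degree condition $\delta(G)\geq 0$ is vacuous, $k=0\neq 1$ so part (i) is the relevant one, and the order condition $n\geq (k+2)^2$ becomes exactly $n\geq 4$. It therefore remains to check that $Q^0_n\cong K_{n,n-1}\cup K_1$. Part (i) will then say that every balanced bipartite graph $G$ on $2n$ vertices with $\rho(G)\geq \rho(K_{n,n-1}\cup K_1)$ has a Hamilton path unless $G\cong K_{n,n-1}\cup K_1$.

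To identify $Q^0_n=K_{0,n-1}\sqcup \widehat{K_{n,1}}$, I will unpack the definitions.
\begin{itemize}
\item For $G_1=K_{0,n-1}$, take parts $X_1=\emptyset$ and $|Y_1|=n-1$; this graph has no edges.
\item For $G_2=\widehat{K_{n,1}}$, take parts $|X_2|=n$ and $|Y_2|=1$. Since $K_{n,1}$ is complete bipartite, its quasi-complement has no edges.
\item The operation $\sqcup$ adds all edges between $X_1$ and $Y_2$, and there are none because $X_1=\emptyset$.
\item It also adds all edges between $Y_1$ and $X_2$, which gives a complete bipartite graph $K_{n-1,n}$.
\end{itemize}
The single vertex of $Y_2$ receives no edges, so $Q^0_n\cong K_{n,n-1}\cup K_1$. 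Its bipartition $(X_1\cup X_2,\,Y_1\cup Y_2)$ has parts of sizes $n$ and $n$, so it is balanced on $2n$ vertices, consistent with the setting of Theorem \ref{thm1.1}.

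Substituting this identification into Theorem \ref{thm1.1}(i) yields the corollary. There is no real obstacle here. The only point needing care is reading the $\sqcup$ construction and the quasi-complement correctly in the degenerate case $k=0$, namely that an empty part contributes no edges and that the quasi-complement of a complete bipartite graph is edgeless.
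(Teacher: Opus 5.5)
Your proposal is correct and follows the paper's route exactly: the paper obtains this corollary by specializing Theorem~\ref{thm1.1}(i) to $k=0$, where $n\geq (k+2)^2$ becomes $n\geq 4$ and $Q^0_n=K_{0,n-1}\sqcup\widehat{K_{n,1}}\cong K_{n,n-1}\cup K_1$. The paper uses this same identification later, for example in $\rho(Q^0_n)=\sqrt{n(n-1)}$, and your unpacking of the degenerate $\sqcup$ and quasi-complement construction is accurate.
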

A bipartite graph with the bipartition $(X, Y)$ is called nearly balanced if $|X|-|Y|= 1$ (by the symmetry). Li and Ning\cite{li2017spectral} also proposed a spectral radius condition for a nearly balanced bipartite graph with given minimum degree to contain a Hamilton path. Define $S^k_n =K_{k, n-k-1}\sqcup \widehat{K_{n-k, k}}$ and $T^k_n =K_{k, n-k-1}\sqcup \widehat{K_{n-k-1, k+1}}.$
\begin{thm}[Li and Ning\cite{li2017spectral}]\label{thm1.2}
Let $G$ be a nearly balanced bipartite graph on $2n-1$ vertices with minimum degree $\delta(G) \geq k,$ where $k \geq 0$ and $n \geq (k+1)^2.$\\
(i) If $k\geq 1$ and $\rho(G)\geq \rho(S^k_n),$ then $G$ admits a Hamilton path unless $G\cong S^k_n.$\\
(ii) If $k = 0$ and $\rho(G)\geq \rho(T^0_n),$ then $G$ admits a Hamilton path unless $G\cong T^0_n.$
\end{thm}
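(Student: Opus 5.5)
We prove Theorem \ref{thm1.2} by combining a bipartite closure argument with the stability method of Li and Ning. Let $G$ have bipartition $(X,Y)$ with $|X|=n$ and $|Y|=n-1$. A Hamilton path in $G$ must start and end in $X$. Add a new vertex $y_0$ to $Y$ and join it to every vertex of $X$; call the result $G'$. Then $G'$ is balanced on $2n$ vertices, and $G$ has a Hamilton path if and only if $G'$ has a Hamilton cycle. This lets us use the bipartite closure of Bondy--Chv\'atal type (Moon--Moser): repeatedly join nonadjacent $x\in X$, $y\in Y$ with $d(x)+d(y)\ge n+1$ in $G'$. Equivalently, in $G$ we join $x,y$ when $d_G(x)+d_G(y)\ge n$. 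Hamiltonicity of $G'$ is preserved, so the closure $\mathrm{cl}(G)$ has a Hamilton path if and only if $G$ does.

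\textbf{Step 1 (from spectral radius to edge count).} Suppose $G$ has no Hamilton path. We first show that $\rho(G)\ge\rho(S^k_n)$ forces $e(G)$ to be large. For this we use the bipartite bound $\rho(G)\le\sqrt{e(G)}$, or the sharper Hong-type bound involving $\delta$. Since $S^k_n$ contains $K_{n-k,n-k-1}$, we get $\rho(S^k_n)>\sqrt{(n-k)(n-k-1)}$, and hence $e(G)>(n-k)(n-k-1)$.

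\textbf{Step 2 (structure of the closure).} Let $H=\mathrm{cl}(G)$. Then $H$ is non-traceable, $\delta(H)\ge k$, and $e(H)\ge e(G)$. We invoke an edge-extremal characterization, proved via a bipartite Chv\'atal-type degree sequence condition (Moon--Moser type): if $H$ is not traceable, there is some $s$ with $k\le s\le n/2$ and $s$ vertices of $X$ of degree at most $s$, together with a matching condition on the $Y$ side. Counting edges gives $e(H)\le\max\{f(k),f(\lfloor n/2\rfloor)\}$, where $f(s)$ is the edge count of the corresponding extremal family, with extremal graphs $K_{s,n-s-1}\sqcup\widehat{K_{n-s,s}}$. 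When $n\ge(k+1)^2$, the middle values of $s$ give too few edges to beat the bound from Step 1. Hence $s=k$, and $H$ is a subgraph of $S^k_n$. For $k=0$ the isolated-vertex structure instead gives a subgraph of $T^0_n$; here $T^0_n$ beats $S^0_n=K_{n,n-1}$-like configurations, since $S^0_n$ is traceable.

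\textbf{Step 3 (spectral comparison).} Since $G\subseteq H\subseteq S^k_n$ (respectively $T^0_n$), Perron--Frobenius monotonicity gives $\rho(G)\le\rho(S^k_n)$. Equality holds only if $G=S^k_n$, because $S^k_n$ is connected for $k\ge1$. For $k=0$, $T^0_n=K_{0,n-1}\sqcup\widehat{K_{n-1,1}}$ is $K_{n-1,n-1}$ plus an isolated vertex, and the same monotonicity applies to the nontrivial component.

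\textbf{Main obstacle.} The hard part is Step 2. We must rule out the intermediate $s$, and also those closures not contained in $S^k_n$ whose edge count is still close to $(n-k)(n-k-1)$. The edge bound from Step 1 alone may not suffice when $k$ is large relative to $n$. Our first attempt will be to replace $\sqrt{e}$ by the bound $\rho\le\sqrt{e-\delta(n-1-\delta)+\ldots}$ of Bhattacharya--Friedland--Peled type for bipartite graphs with minimum degree $\delta$. We will then check that the resulting inequality $f(s)<(n-k)(n-k-1)+O(k^2)$ holds for all $k<s\le n/2$ precisely when $n\ge(k+1)^2$.
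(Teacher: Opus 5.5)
The paper does not prove this theorem; it only quotes it from Li and Ning. So there is no in-paper argument to compare against, and I am judging your outline on its own. Your architecture is the standard one for results of this kind: pass to $G'$, take the bipartite closure, use an edge-stability statement, then finish with $\rho\le\sqrt{e}$ and Perron--Frobenius. But Step 1 as written breaks it.

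The subgraph $K_{n-k,n-k-1}$ only gives $e(G)>(n-k)(n-k-1)$, and that is too weak to discard the intermediate values of $s$. Already $S^{k+1}_n$ is non-traceable with $\delta\ge k$, and
\[
e(S^{k+1}_n)-(n-k)(n-k-1)=n(n-k-2)+(k+1)^2-(n-k)(n-k-1)=(k-1)n+k+1>0 .
\]
So the sentence ``the middle values of $s$ give too few edges to beat the bound from Step 1'' is false. The inequality $f(s)<(n-k)(n-k-1)+O(k^2)$ you plan to verify also fails at $s=k+1$ for $k\ge 2$, because the discrepancy is linear in $n$.

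No refined Bhattacharya--Friedland--Peled bound is needed; you only need the right subgraph. In $S^k_n$ every vertex of $X$ is joined to the $(n-k-1)$-set $Y_1$. Hence $K_{n,n-k-1}$ is a proper subgraph of the connected graph $S^k_n$, and
\[
e(G)\ge\rho(G)^2\ge\rho(S^k_n)^2>n(n-k-1).
\]
Since $f(s)=n(n-s-1)+s^2$ is convex, it suffices to check the endpoints. First, $f(k+1)\le n(n-k-1)$ if and only if $n\ge(k+1)^2$. Second, $f(n/2)=3n^2/4-n\le n(n-k-1)$ if and only if $n\ge 4k$. This is exactly where the hypothesis $n\ge(k+1)^2$ enters.

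Even with this fix, Step 2 is asserted rather than proved, and three points need care.

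\textbf{The side of the low-degree vertices.} In $S^k_n$ the $k$ vertices of degree $k$ lie in $Y$, the part of size $n-1$, while every vertex of $X$ has degree $n-k-1$ or $n-1$. So ``$s$ vertices of $X$ of degree at most $s$'' does not even describe the extremal graph. Chv\'atal's bipartite condition applied to the closure of $G'$ gives two cases.
\begin{itemize}
\item Low degrees in $X$: then $d_G\le s-1$, so $s\ge k+1$, and this case is excluded by the count above.
\item Low degrees in $Y$: then $s\ge k$, and the count forces $s=k$.
\end{itemize}

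\textbf{From $s=k$ to $H\subseteq S^k_n$.} This step is not automatic, because the count only places $e(H)$ in the interval $(n(n-k-1),\,n(n-k-1)+k^2]$. You must use the closure rule together with $\delta\ge k$:
\begin{itemize}
\item every $x\in X$ with $d_H(x)\ge n-k$ is adjacent to all of $Y$;
\item every $y\in Y$ with $d_H(y)\ge n-k$ is adjacent to all of $X$.
\end{itemize}
Let $A,B$ be these two sets, let $Y_0$ be the $k$ vertices of degree $k$, and set $c=n-k-1-|B|$. Counting the edges between $X\setminus A$ and $Y\setminus B$, and applying the closure rule once more to $Y\setminus(B\cup Y_0)$, yields $c(n-k-c)<k^2$. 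This is impossible for $c\ge1$ when $n\ge(k+1)^2$. Then $c=0$, which forces $N(Y_0)=A$, $|A|=k$ and $H=S^k_n$.

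\textbf{The case $k=0$.} Step 1 now gives only $e(G)\ge(n-1)^2$, and the inequality is not strict because $T^0_n$ attains $\rho=\sqrt{e}$. Moreover $S^1_n$ is non-traceable with exactly $(n-1)^2$ edges, so edge counting alone cannot separate it from $T^0_n$. You need $\rho(S^1_n)<\sqrt{e(S^1_n)}=n-1$, which holds because equality in $\rho\le\sqrt{e}$ occurs only for a complete bipartite graph plus isolated vertices. Your comparison with $S^0_n=K_{n,n-1}$ is beside the point, since that graph is traceable.
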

According to Theorem \ref{thm1.2}, one can deduce the following result.
\begin{cor}[Li and Ning\cite{li2017spectral}]\label{cor1.2}
Let $G$ be a nearly balanced bipartite graph on $2n-1$ vertices. If
\begin{eqnarray*}
\rho(G)\geq \rho(K_{n-1,n-1}\cup K_1),
\end{eqnarray*}
then $G$ admits a Hamilton path unless $G\cong K_{n-1,n-1}\cup K_1.$
\end{cor}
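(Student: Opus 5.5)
The plan is to read the statement off as the case $k=0$ of Theorem \ref{thm1.2}(ii). Every graph satisfies $\delta(G)\geq 0$, and the size requirement $n\geq (k+1)^2$ becomes $n\geq 1$, which holds for any graph on $2n-1$ vertices. So Theorem \ref{thm1.2}(ii) applies to every nearly balanced bipartite graph $G$ on $2n-1$ vertices. It gives: if $\rho(G)\geq \rho(T^0_n)$, then $G$ has a Hamilton path unless $G\cong T^0_n$. The remaining step is to identify $T^0_n$ with $K_{n-1,n-1}\cup K_1$.

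To do this I will unwind the definition $T^0_n=K_{0,n-1}\sqcup \widehat{K_{n-1,1}}$.

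\emph{First factor.} In $K_{0,n-1}$ the part $X_1$ is empty and $Y_1$ has $n-1$ vertices, with no edges.

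\emph{Second factor.} $\widehat{K_{n-1,1}}$ is the quasi-complement of a complete bipartite graph. It is therefore edgeless, with $|X_2|=n-1$ and $|Y_2|=1$.

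\emph{The $\sqcup$ operation.} It adds all edges between $X_1$ and $Y_2$; there are none, since $X_1=\emptyset$. It also adds all edges between $Y_1$ and $X_2$, which forms a complete bipartite graph $K_{n-1,n-1}$ on $Y_1\cup X_2$. The single vertex of $Y_2$ receives no edges, so it is isolated.

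Hence $T^0_n\cong K_{n-1,n-1}\cup K_1$. Its bipartition is $(X_1\cup X_2,\,Y_1\cup Y_2)$, with parts of sizes $n-1$ and $n$, so it is nearly balanced on $2n-1$ vertices, as required.

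Substituting this identification into Theorem \ref{thm1.2}(ii) gives the statement exactly. There is no real obstacle. The only point needing care is the bookkeeping of which side receives the isolated vertex. That vertex lies in the larger part, and this is consistent with the convention $|X|-|Y|=1$ up to symmetry. Consequently the extremal graph is determined up to isomorphism, and the exceptional case $G\cong T^0_n$ is precisely $G\cong K_{n-1,n-1}\cup K_1$.
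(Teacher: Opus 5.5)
Your proposal is correct and matches the paper, which derives Corollary \ref{cor1.2} directly from Theorem \ref{thm1.2}(ii) with $k=0$ (where $\delta(G)\geq 0$ and $n\geq 1$ hold automatically) together with the identification $T^0_n=K_{0,n-1}\sqcup\widehat{K_{n-1,1}}\cong K_{n-1,n-1}\cup K_1$. Your unwinding of the $\sqcup$ construction, including the check that the isolated vertex lies in the part of size $n$, spells out an identification that the paper only asserts.
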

Based on Corollaries \ref{cor1.1} and \ref{cor1.2}, a natural and interesting problem arises.
\begin{prob}\label{prob1.2}
What is the sufficient condition in terms of the spectral radius for the existence of a rainbow Hamilton path in a family of bipartite graphs?
\end{prob}
Focusing on Problem \ref{prob1.2}, we provide sufficient conditions based on the spectral radius for the existence of a rainbow Hamilton path in a family of bipartite graphs.
\begin{thm}\label{thm1.4}
    Let $\mathcal{G}=\{G_1, G_2, \ldots , G_{2n-1}\}$ be a family of balanced bipartite graphs on vertex set $[2n]$ and the bipartition $(X, Y),$ where $n\geq2.$ If
    \begin{eqnarray*}
        \rho (G_i) \geq \rho(K_{n,n-1}\cup K_1)
    \end{eqnarray*}
     for every $i\in [2n-1],$ then $\mathcal{G}$ admits a rainbow Hamilton path unless $G_1=G_2=\cdots =G_{2n-1}\cong K_{n,n-1}\cup K_1.$
\end{thm}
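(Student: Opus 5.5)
We argue by contradiction and induction, using bi-shifting to reduce to a structured family. Suppose $\mathcal{G}=\{G_1,\dots,G_{2n-1}\}$ satisfies $\rho(G_i)\ge \rho(K_{n,n-1}\cup K_1)$ for all $i$, has no rainbow Hamilton path, and is not the trivial extremal family.

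\textbf{Bi-shifting.} Write $X=\{x_1,\dots,x_n\}$ and $Y=\{y_1,\dots,y_n\}$. For $i<j$, the shift $S_{x_ix_j}$ replaces an edge $x_jy\in E(G_\ell)$ by $x_iy$ whenever $x_iy\notin E(G_\ell)$, applied simultaneously to all $G_\ell$; $S_{y_iy_j}$ is defined symmetrically. We need two facts.
\begin{itemize}
\item[(a)] Shifting does not decrease the spectral radius. This is the standard Kelmans-type argument: compare the Rayleigh quotient at the Perron vector, after relabelling so that the shifted-to vertex has the larger Perron entry.
\item[(b)] If the shifted family has a rainbow Hamilton path, so does the original family. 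This needs the usual case analysis on how the path passes through $x_i$ and $x_j$, exchanging their roles so that every edge keeps its original colour.
\end{itemize}
Iterating the shifts, we may assume every $G_\ell$ is bi-shifted (a Ferrers/chain bipartite graph): $N(x_1)\supseteq\cdots\supseteq N(x_n)$, each neighbourhood is an initial segment of $Y$, and symmetrically on the $Y$ side. Equality cases must be tracked so that the extremal characterisation survives the reduction.

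\textbf{Spectral bound to containment.} For a bi-shifted balanced graph $G$ on $2n$ vertices, we show that $\rho(G)\ge\rho(K_{n,n-1}\cup K_1)=\sqrt{n(n-1)}$ forces $G\supseteq K_{n,n-1}\cup K_1$ in shifted position. Concretely, this means $x_1,\dots,x_n$ are all adjacent to $y_1,\dots,y_{n-1}$, or the symmetric configuration with the roles of $X$ and $Y$ swapped; otherwise $G$ is the Hamiltonian-path-containing shifted graph. The route is as follows:
\begin{itemize}
\item If $G$ misses the edge $x_ny_{n-1}$ and also $x_{n-1}y_n$, then $G$ is contained in one of a small number of maximal Ferrers graphs, such as $K_{n,n-2}$ plus a few edges.
\item Directly compute that each of these maximal graphs has $\rho<\sqrt{n(n-1)}$, via $\rho^2=\lambda_{\max}(BB^T)$ for the biadjacency matrix $B$.
\end{itemize}
Alternatively, Corollary \ref{cor1.1} gives that each $G_i$ either has a Hamilton path or equals $K_{n,n-1}\cup K_1$. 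Either way, in the shifted setting every $G_\ell$ contains $x_ny_{n-1}$ or $y_nx_{n-1}$ type edges, and hence contains the shifted Hamilton path $P=x_1y_1x_2y_2\cdots$ or a near-complete structure.

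\textbf{Rainbow path construction.} Once every $G_\ell$ contains a copy of $K_{n,n-1}$ (vertex $y_n$ or $x_n$ possibly isolated), we build the rainbow path greedily. The $2n-1$ edges of the path $x_1y_1\cdots x_ny_n$ are all present in every graph except the last edge $x_ny_n$ (or, in the symmetric case, except the edge incident to $x_n$). So we need only one graph $G_\ell$ containing a suitable edge at the deficient vertex, assign that edge to $G_\ell$, and assign the remaining edges bijectively to the other graphs. Two situations remain:
\begin{itemize}
\item If some graph is $K_{n,n-1}\cup K_1$ with isolated vertex $y_n$ and another is $K_{n,n-1}\cup K_1$ with isolated vertex $x_n$, then a path from $y_n$ through the first type ending at $x_n$ works using one edge at each end.
\item If every graph has the same isolated vertex, then, undoing the shifts via (b) and the equality analysis in (a), all $G_\ell$ are equal to $K_{n,n-1}\cup K_1$. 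This is the exceptional family.
\end{itemize}

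\textbf{Main obstacle.} I expect the main obstacle to be the spectral step: proving that a shifted graph with $\rho\ge\sqrt{n(n-1)}$ other than $K_{n,n-1}\cup K_1$ has the required edges, uniformly for all $n\ge 2$ rather than only large $n$. The second difficulty is the equality tracking, namely showing that the extremal structure after shifting implies it before shifting.
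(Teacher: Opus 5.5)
\textbf{Verdict.} Your proposal has two genuine gaps. The first is in the spectral step, which is false as stated. The second is in the extremal case, where the shifting argument cannot reach the conclusion $G_1=\cdots=G_{2n-1}$.

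\textbf{The spectral containment step is false, and your rainbow construction rests on it.} Take the bi-shifted graph $H=K_{n,n}-\{x_{n-1}y_n,\,x_ny_{n-1},\,x_ny_n\}$.
\begin{itemize}
\item It misses both $x_ny_{n-1}$ and $x_{n-1}y_n$.
\item Since $d_H(x_n)=d_H(y_n)=n-2$, it contains $K_{n,n-1}$ in neither orientation.
\item Yet $\rho(H)\ge 2|E(H)|/|V(H)|=(n^2-3)/n>\sqrt{n(n-1)}$ for $n\ge 6$.
\end{itemize}
So your bullet about ``a small number of maximal Ferrers graphs'' fails. The alternative via Corollary~\ref{cor1.1} fails too: $H$ has a Hamilton path but neither of your edges. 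In any case, each $G_\ell$ having a Hamilton path individually gives no rainbow one.

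What the spectral condition really forces is weaker, and this is what the paper uses. Every shifted $S_\ell$ must contain the anti-diagonal edges $x_jy_{n+1-j}$ for $2\le j\le n-1$.
\begin{itemize}
\item If $x_py_{n+1-p}\notin E(S_\ell)$, then every edge $x_iy_k$ of $S_\ell$ has $i<p$ or $k<n+1-p$. So $S_\ell\subseteq Q^{p-1}_n$.
\item Nosal's bound gives $\rho(Q^k_n)\le\sqrt{nk+(n-k)(n-k-1)}<\sqrt{n(n-1)}$ for $1\le k\le n-2$, a contradiction.
\item Shiftedness then also supplies the edges $x_jy_{n-j}$ for $1\le j\le n-1$ in every $S_\ell$.
\end{itemize}
Hence the zigzag $y_nx_1y_{n-1}x_2\cdots x_{n-1}y_1x_n$ is rainbow as soon as $x_1y_n$ and $x_ny_1$ lie in two \emph{different} graphs. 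If they cannot be placed this way, one of three things happens:
\begin{itemize}
\item $y_n$ is isolated in every $S_\ell$;
\item $x_n$ is isolated in every $S_\ell$;
\item all but one $S_\ell$ lie in $K_{n-1,n-1}\cup 2K_1$.
\end{itemize}
In the first two cases each $S_\ell\subseteq Q^0_n$, hence $S_\ell\cong Q^0_n$. The third case contradicts the spectral bound.

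\textbf{Your treatment of the extremal case does not work.} Property (b) transfers only \emph{existence} of a rainbow path from $S(\mathcal G)$ back to $\mathcal G$. It says nothing when $S(\mathcal G)$ has none.

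Distinct labelled copies of $K_{n,n-1}\cup K_1$ shift to the same graph. For instance, let the isolated vertex be $x_1$ in $G_1$ and $x_2$ in every other $G_\ell$. The shifted family then has no rainbow Hamilton path, while the theorem asserts that the original family does. You need two further ingredients:
\begin{itemize}
\item From $\rho(G)\ge\rho(Q^0_n)$ and $S(G)\cong Q^0_n$, deduce $G\cong Q^0_n$. The paper does this shift by shift, using that a shift strictly increases $\rho$ of a connected graph that it changes (Corollary~\ref{cor3.1}).
\item Show directly, by a combinatorial argument, that copies of $Q^0_n$ which are not all identical admit a rainbow Hamilton path. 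The paper's Lemma~\ref{lem3.2} does this with a longest-rainbow-path and degree-counting analysis.
\end{itemize}
Without the second ingredient you cannot conclude $G_1=\cdots=G_{2n-1}$.
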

\begin{thm}\label{thm1.5}
    Let $\mathcal{G}=\{G_1, G_2, \ldots , G_{2n-2}\}$ be a family of nearly balanced bipartite graphs on vertex set $[2n-1]$ and the bipartition $(X, Y ),$ where $n \geq 2.$ If
    \begin{eqnarray*}
        \rho (G_i) \geq \rho(K_{n-1,n-1}\cup K_1)
    \end{eqnarray*}
     for every $i\in [2n-2],$ then $\mathcal{G}$ admits a rainbow Hamilton path unless $G_1=G_2=\cdots =G_{2n-2}\cong K_{n-1,n-1}\cup K_1.$
\end{thm}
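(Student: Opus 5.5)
Theorem \ref{thm1.5} will be proved by adapting the bi-shifting argument to the nearly balanced setting, where $|X|=n$ and $|Y|=n-1$. A Hamilton path on $2n-1$ vertices must then start and end in $X$ and alternate. We argue by contradiction: suppose $\mathcal{G}$ has no rainbow Hamilton path and not all $G_i$ equal $K_{n-1,n-1}\cup K_1$.

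\textbf{Step 1: single graphs.} The key observation is that a single graph $G$ with $\rho(G)\ge \rho(K_{n-1,n-1}\cup K_1)=n-1$ is very dense. If $G$ has $e$ edges, then $\rho(G)\le \sqrt{e}$ for bipartite graphs, so $e\ge (n-1)^2$. The quasi-complement $\widehat{G}$ therefore has at most $n(n-1)-(n-1)^2=n-1$ edges. Equality $\rho=\sqrt{e}$ holds only for a complete bipartite graph plus isolated vertices, which forces $G\cong K_{n-1,n-1}\cup K_1$ when $e=(n-1)^2$. Otherwise $e\ge (n-1)^2+1$, so $\widehat G$ has at most $n-2$ edges. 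Moreover, Corollary \ref{cor1.2} gives each such $G_i$ a Hamilton path.

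\textbf{Step 2: building the rainbow path.} I would not try to shift the family directly. Instead I would build the rainbow path greedily, in the spirit of a rainbow Dirac argument.

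Case (a): every $G_i$ is extremal. Each isolated vertex must lie in $X$. If two $G_i$'s isolate different vertices $x\neq x'$, a short explicit construction assigns colours so that no edge at $x$ uses a graph isolating $x$. Since the remaining edges are complete, this produces a rainbow Hamilton path, with a little care for small $n$. Hence all $G_i$ isolate the same vertex, so they coincide, which is the exceptional family.

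Case (b): some $G_j$ is non-extremal, so $\widehat{G_j}$ has at most $n-2$ missing edges. Here I would use the counting/matching approach. Consider an auxiliary bipartite graph between the $2n-2$ edge slots of a fixed alternating ordering $x_1y_1x_2\cdots y_{n-1}x_n$ and the colours, with a slot adjacent to colour $i$ when that edge lies in $G_i$. Each $G_i$ misses at most $n-1$ pairs, so a random relabelling of $X$ and $Y$ makes each slot miss few colours on average. Hall's condition would then yield a perfect matching, i.e.\ a rainbow Hamilton path, for suitable orderings.

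\textbf{Main obstacle.} I expect the hardest part to be handling the colours $G_i$ that are extremal: each misses all $n-1$ edges at its isolated vertex $v_i$. The ordering must place every such $v_i$ so that its one or two incident path-edges receive non-extremal colours or colours isolating other vertices. A Hall-violating set would have to come from many extremal graphs sharing one isolated vertex $v$ together with an edge at $v$ missing from almost all colours.

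To rule this out, I would make $v$ an endpoint of the path, so it has a single incident edge. That edge then needs just one colour $G_j$ in which $v$ has a neighbour, and such a $G_j$ exists because not all graphs coincide. The rest of the path lies inside the near-complete graph on the remaining vertices, where Hall's condition is easy.

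Small cases $n=2,3$ will be checked directly.
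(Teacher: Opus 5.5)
Your Step 1 and Case (a) are sound, but Case (b) has a genuine gap, and it is where all the work lies. On Step 1: the bound $\rho(G)\ge n-1$, together with Lemma \ref{lem2.5} and the equality case of the Nosal/Bhattacharya--Friedland--Peled bound, does force either $G\cong T^0_n$ or $|E(\widehat G)|\le n-2$. On Case (a): when every $G_i$ is extremal, the colour--slot graph satisfies Hall's condition once the vertex isolated most often is placed at an endpoint, unless all $G_i$ isolate the same vertex. This plays the role of the paper's Lemma \ref{lem4.2}.

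In Case (b) you only assert that Hall's condition holds ``for suitable orderings''. The averaging bound does not give this. A random ordering makes a fixed slot miss at most $(2n-2)/n<2$ colours in expectation, but Hall's condition is a worst-case statement. By K\"onig, a fixed path fails exactly when there are a set $C$ of colours and a set $S$ of path edges with $|C|+|S|\ge 2n-1$ and every colour in $C$ missing every edge in $S$. Such configurations are not rare. For example, if some $v\in X$ is isolated in at least $2n-3$ colours, it lands in the interior with probability $(n-2)/n$.

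Putting $v$ at an endpoint handles only that single configuration. You must still find a rainbow Hamilton path with $2n-3$ colours on $2n-3$ slots, so Hall has no slack. The remaining family may contain extremal graphs isolating other vertices of $X$, and non-extremal graphs each missing up to $n-2$ adversarially placed pairs. ``Hall's condition is easy'' is not an argument there.

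Note also that after Step 1 you have discarded the spectral hypothesis. Graphs with $(n-1)^2+1$ edges need not satisfy $\rho\ge n-1$; an example is $K_{n,n-2}$ plus a vertex of $Y$ joined to two vertices of $X$, for $n\ge 5$. So Case (b) as you set it up is a strictly stronger, purely edge-count rainbow statement, and it is left unproved.

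The paper avoids this combinatorics by bi-shifting. By Lemmas \ref{lem2.1} and \ref{lem2.3}, each $G_i$ may be replaced by its bi-shifted graph $S_i$ without decreasing $\rho$ and without creating a rainbow Hamilton path. In a bi-shifted graph, Observation \ref{obs::1} turns the presence of a few canonical edges $e_j=\{j,2n-j\}$ into the presence of all ``smaller'' edges. Lemma \ref{lem2.8} then forces $e_2,\dots,e_{n-1}\in E(S_i)$ for every $i$. Any $S_p$ meeting the last vertex of the larger part yields an explicit rainbow Hamilton path. Hence each $S_i\cong T^0_n$; Corollary \ref{cor4.1} pulls this back to $G_i\cong T^0_n$, and Lemma \ref{lem4.2} finishes.

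To keep your route, you need a deterministic K\"onig-type analysis for Case (b). One possible starting point: bound the pairs missing from at least $n$ colours (at most $2n-4$ of them when $n\ge 3$), then choose the path so that it uses such pairs only in positions where no large common $C$ exists. Otherwise, adopt the shifting reduction.
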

Li and Ning\cite{li2016spectral} also established a spectral radius condition for bipartite graphs with given minimum degree to guarantee the existence of rainbow Hamilton cycles. Denote $B^k_n =K_{k, n-k}\sqcup \widehat{K_{n-k, k}}.$
\begin{thm}[Li and Ning\cite{li2016spectral}]\label{thm1.3}
Let $G$ be a balanced bipartite graph on $2n$ vertices with minimum degree $\delta(G) \geq k\geq 1.$ If $n\geq(k+1)^2$ and $\rho(G)\geq \rho(B^k_n),$ then $G$ admits a Hamilton cycle unless $G\cong B^k_n.$
\end{thm}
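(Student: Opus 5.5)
The plan is to combine the bipartite Bondy--Chvátal closure with a structural description of non-Hamiltonian closed bipartite graphs, and then compare spectral radii. Recall the bipartite closure lemma. If $x\in X$ and $y\in Y$ are nonadjacent with $d(x)+d(y)\ge n+1$, then $G$ is Hamiltonian if and only if $G+xy$ is. Repeatedly adding such edges gives the closure $H=\mathrm{cl}(G)$, which has three properties:
\begin{itemize}
\item $H$ is Hamiltonian if and only if $G$ is;
\item $\delta(H)\ge\delta(G)\ge k$;
\item $\rho(H)\ge\rho(G)$, by Perron--Frobenius monotonicity, with strict inequality whenever $H\ne G$ and $H$ is connected.
\end{itemize}
So I assume $G$ is non-Hamiltonian, and hence $H$ is too. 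I aim to show $H\cong B^k_n$, and then that $G=H$.

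\textbf{Step 1: structure of $H$.} Since $H$ is closed and not complete, every nonadjacent pair $x\in X$, $y\in Y$ satisfies $d_H(x)+d_H(y)\le n$. I would use the bipartite Chvátal degree-sequence condition: a non-Hamiltonian balanced bipartite graph has an index $i\le n/2$ for which at least $i$ vertices of $X$ have degree at most $i$, and at least $i$ vertices of $Y$ have degree at most $n-i$ (or the symmetric statement with $X$ and $Y$ swapped).

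I expect that in the closed graph $H$ these low-degree vertices can be taken to form exactly the configuration of $B^i_n$. Concretely, there should be a set $X_1\subseteq X$ with $|X_1|=i$, all of degree $\le i$. Their neighbourhoods should be contained in a common set $Y_2\subseteq Y$ of size $i$; otherwise closure would force further edges. This would give $H\subseteq B^i_n$ for some $i$ with $k\le i\le n/2$, where $i\ge k$ comes from the minimum degree condition. The reason is $B^i_n=K_{i,n-i}\sqcup\widehat{K_{n-i,i}}$ and has the following pieces:
\begin{itemize}
\item $X_1$ of size $i$ is joined only to $Y_2$ of size $i$;
\item the remaining vertices form $K_{n-i,n-i}$ between $X_2$ and $Y_1$;
\item $X_1$ is joined completely to $Y_2$, as is $Y_1$ to $X_2$.
\end{itemize}
Thus $e(B^i_n)=n(n-i)+i^2$.

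\textbf{Step 2: spectral comparison.} Since $B^k_n$ contains $K_{n,n-k}$ properly, $\rho(G)\ge\rho(B^k_n)>\sqrt{n(n-k)}$. For a bipartite graph, $\rho^2\le e$. If $i\ge k+1$, then
\begin{eqnarray*}
\rho(H)^2\le e(B^i_n)=n(n-i)+i^2\le n(n-k-1)+(k+1)^2\le n(n-k).
\end{eqnarray*}
The last inequality uses the hypothesis $n\ge (k+1)^2$, and it can be made strict in the relevant cases. When $i$ is larger and up to $n/2$, the quantity $n(n-i)+i^2$ is convex in $i$, so it suffices to check the endpoint $i=k+1$ together with $i=n/2$; the endpoint $i=n/2$ gives $3n^2/4$, which is again below $n(n-k)$ in this range. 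This is a contradiction, so $i=k$ and $H\subseteq B^k_n$.

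Then $\rho(B^k_n)\le\rho(G)\le\rho(H)\le\rho(B^k_n)$. Since $B^k_n$ is connected for $k\ge1$, strict monotonicity forces $G=H=B^k_n$. Finally, one checks that $B^k_n$ is indeed non-Hamiltonian: the $k$ vertices of $X_1$ see only $Y_2$, and $|Y_2|=k$, so deleting $Y_2$ leaves at least $k+1$ components (the $k$ isolated vertices of $X_1$ plus the rest), which is impossible in a Hamiltonian graph.

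\textbf{Main obstacle.} I expect the hard part to be Step 1, extracting the exact embedding $H\subseteq B^i_n$ from the closure property. What I would try first is to pick a nonadjacent pair $x\in X$, $y\in Y$ with $d(x)+d(y)$ maximum, and argue as in the Erdős/Moon--Moser proofs that the low-degree vertices cluster into a $B^i_n$ pattern. A secondary concern is tightness in Step 2 near the boundary $n=(k+1)^2$. If the bound $\rho^2\le e$ is too weak there, I would replace it with an explicit computation of $\rho(B^i_n)$ via its $4\times4$ equitable quotient matrix.
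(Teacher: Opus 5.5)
\textbf{There is a genuine gap in Step 1.} The paper gives no proof of this theorem (it is quoted from Li and Ning), so your argument has to stand on its own. Its load-bearing claim in Step 1 is false: closure alone does not force the $i$ low-degree vertices supplied by the bipartite Chv\'atal condition to share a common neighbourhood of size $i$, so $H\subseteq B^i_n$ need not hold.

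Here is a counterexample. Take three disjoint copies of $K_{m,m}$ with parts $(A_j,B_j)$. Add a vertex $x\in X$ adjacent to all of $Y$ and a vertex $y\in Y$ adjacent to all of $X$, so $n=3m+1$.
\begin{itemize}
\item Every nonadjacent pair has degree sum $2m+2\le n$, so the graph is closed.
\item Deleting $x$ and $y$ leaves three components, so it is non-Hamiltonian.
\item Every nonempty proper $S$ on either side satisfies $|N(S)|\ge |S|+1$, so the graph lies in no $B^i_n$.
\item With $m=1$ and $k=1$ it even satisfies $\delta\ge k$ and $n\ge (k+1)^2$.
\end{itemize}
Connected $r$-regular non-Hamiltonian bipartite graphs with $2r\le n$ are further examples. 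Your heuristic ``otherwise closure would force further edges'' fails because, without a lower bound on $e(H)$, the relevant degree sums stay at most $n$. The spectral hypothesis has to be used inside the structural step, not only afterwards in Step 2.

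\textbf{How to repair it.} From the Chv\'atal condition keep only the degree count. If $i$ vertices of $X$ have degree at most $i$, then $e(H)\le i^2+(n-i)n=e(B^i_n)$.
\begin{itemize}
\item Since $e(H)\ge \rho(H)^2>n(n-k)$, your Step 2 computation excludes $k+1\le i\le n/2$. No separate check at $i=n/2$ is needed, because $n^2-ni+i^2$ is decreasing on $[0,n/2]$.
\item So $i=k$, and there is a set $X_1\subseteq X$ of $k$ vertices of degree exactly $k$. Put $X_2=X\setminus X_1$.
\item The number $M$ of non-edges between $X_2$ and $Y$ is $n(n-k)-(e(H)-k^2)<k^2$.
\item If some $x\in X_2$ and $y\in Y$ were nonadjacent, then $d(x)+d(y)\ge n+(n-k)-(M+1)\ge 2n-k-k^2\ge n+1$, using $n\ge (k+1)^2$. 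This contradicts closedness, so $X_2$ is complete to $Y$.
\item Hence every $y\in Y$ has degree at least $n-k$. For $x\in X_1$ and $y\notin N(x)$, closedness gives $d(y)=n-k$, so such a $y$ has no neighbour in $X_1$.
\item Therefore all vertices of $X_1$ share a single $k$-set of neighbours, and $H=B^k_n$.
\end{itemize}
After this, your final step ($G=H$ by strict Perron--Frobenius monotonicity, together with the non-Hamiltonicity check) goes through.

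\textbf{Two smaller slips.} First, the Chv\'atal condition should say ``at least $n-i$ vertices of $Y$ have degree at most $n-i$'', not ``at least $i$''. Second, your description of $B^i_n$ omits the complete join between $Y_2$ and $X_2$, although your edge count $n(n-i)+i^2$ is correct.
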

By Theorem \ref{thm1.3}, we can immediately obtain the following corollary.
\begin{cor}[Li and Ning\cite{li2016spectral}]\label{cor1.3}
Let $G$ be a balanced bipartite graph on $2n$ vertices, where $n\geq 4$. If
\begin{eqnarray*}
    \rho(G)\geq \rho(K_{1, n-1}\sqcup \widehat{K_{n-1, 1}}),
\end{eqnarray*}
then $G$ admits a Hamilton cycle unless $G\cong K_{1, n-1}\sqcup \widehat{K_{n-1, 1}}.$
\end{cor}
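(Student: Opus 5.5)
The plan is to deduce the statement from Theorem~\ref{thm1.3} with $k=1$, treating separately the case in which $G$ has an isolated vertex, since Theorem~\ref{thm1.3} needs $\delta(G)\geq 1$.

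\textbf{Case $\delta(G)\geq 1$.} With $k=1$ the requirement $n\geq (k+1)^2$ becomes $n\geq 4$, which is exactly our hypothesis. Moreover, $B^1_n=K_{1,n-1}\sqcup\widehat{K_{n-1,1}}$ by definition. So Theorem~\ref{thm1.3} applies directly and gives the conclusion.

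\textbf{Case $\delta(G)=0$.} Here $G$ has no Hamilton cycle, so I must show that the spectral hypothesis cannot hold, i.e.\ $\rho(G)<\rho(B^1_n)$.

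I first identify $B^1_n$ concretely. Write $X=X_1\cup X_2$ and $Y=Y_1\cup Y_2$ with $|X_1|=|Y_2|=1$ and $|X_2|=|Y_1|=n-1$. The part $K_{1,n-1}$ lives on $(X_1,Y_1)$, and $\widehat{K_{n-1,1}}$ has no edges on $(X_2,Y_2)$. The $\sqcup$ operation adds all $X_1Y_2$ and $X_2Y_1$ edges. Hence $B^1_n$ is $K_{n,n}$ with the $n-1$ edges between the vertex of $Y_2$ and $X_2$ deleted. It is connected, and it contains $K_{n,n-1}\cup K_1$ (delete the $Y_2$ vertex's single edge) as a proper spanning subgraph. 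By Perron--Frobenius, a connected graph has strictly larger spectral radius than any proper subgraph, so
\[
\rho(B^1_n)>\rho(K_{n,n-1}\cup K_1)=\sqrt{n(n-1)}.
\]

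Now let $G$ have an isolated vertex $v$, say $v\in Y$ by symmetry. Then $G$ is a spanning subgraph of $K_{n,n-1}\cup K_1$, with the $K_1$ being $v$. By monotonicity of the spectral radius under edge addition, $\rho(G)\leq\sqrt{n(n-1)}<\rho(B^1_n)$, contradicting the hypothesis. The case $v\in X$ is symmetric.

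This completes the argument. The only nonroutine step is the strict comparison $\rho(B^1_n)>\sqrt{n(n-1)}$, and it is settled by the subgraph observation above rather than by computing $\rho(B^1_n)$ explicitly.
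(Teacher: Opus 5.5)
Your proof is correct and follows the paper's derivation, which obtains this corollary directly from Theorem~\ref{thm1.3} with $k=1$. You also make explicit the isolated-vertex case, which the paper's ``immediately'' leaves implicit; you handle it with the same comparison $\rho(B^1_n)>\rho(Q^0_n)=\sqrt{n(n-1)}$ (a connected graph against a proper spanning subgraph) that the paper itself uses in the proof of Theorem~\ref{thm1.6}.
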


Motivated by Corollary \ref{cor1.3}, we propose the following problem.

\begin{prob}\label{prob1.3}
What is the sufficient condition in terms of the spectral radius for the existence of a rainbow Hamilton cycle in a family of bipartite graphs?
\end{prob}
Focusing on Problem \ref{prob1.3}, we pose a sufficient condition based on the spectral radius for a family of balanced bipartite graphs to contain a rainbow Hamilton cycle.
\begin{thm}\label{thm1.6}
    Let $\mathcal{G}=\{G_1, G_2, \ldots , G_{2n}\}$ be a family of balanced bipartite graphs on vertex set $[2n]$ and the bipartition $(X, Y),$ where $n \geq 2.$ If
    \begin{eqnarray*}
        \rho (G_i) \geq \rho(K_{1, n-1}\sqcup \widehat{K_{n-1, 1}})
    \end{eqnarray*}
    for every $i\in [2n],$ then $\mathcal{G}$ admits a rainbow Hamilton cycle unless $G_1=G_2=\cdots =G_{2n}\cong K_{1, n-1}\sqcup \widehat{K_{n-1, 1}}.$
\end{thm}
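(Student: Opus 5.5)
Write $B_n:=K_{1, n-1}\sqcup \widehat{K_{n-1, 1}}$. Concretely, $B_n$ is $K_{n,n}$ with a vertex $x_0\in X$ joined only to a single $y_0\in Y$; so $\rho(B_n)$ is the largest spectral radius among non-Hamiltonian balanced bipartite graphs. The plan is a bi-shifting argument combined with the spectral result of Corollary \ref{cor1.3} (for small $n$, a direct check).

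\textbf{Step 1 (reduction to the non-rainbow problem).} Assume $\mathcal{G}$ has no rainbow Hamilton cycle. If all $G_i$ are equal, say $G_i=G$, a rainbow Hamilton cycle is just a Hamilton cycle of $G$. By Corollary \ref{cor1.3} (for $n\ge 4$; for $n=2,3$ by direct inspection of the finitely many balanced bipartite graphs), $\rho(G)\ge\rho(B_n)$ and non-Hamiltonicity force $G\cong B_n$, which is exactly the exceptional case.

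So the task is to show that a non-identical family also yields a rainbow cycle. We argue by contradiction: among counterexample families that are not all equal, choose one in which the graphs $G_i$ are \emph{maximally bi-shifted}. That is, fix orders $x_1,\dots,x_n$ of $X$ and $y_1,\dots,y_n$ of $Y$, and apply the bipartite shifting operator $S_{x_j\to x_{j'}}$ (and its analogue on $Y$) simultaneously to all $G_i$.

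\textbf{Step 2 (shifting preserves what we need).} Two lemmas are required.
\begin{itemize}
\item[(a)] Shifting does not decrease $\rho$. Use the Perron vector $\mathbf{x}$ of $G_i$: moving an edge $x_jy$ to $x_{j'}y$ with $x_{x_{j'}}\ge x_{x_j}$ does not decrease $\mathbf{x}^TA\mathbf{x}$. Since we shift toward a fixed vertex order, apply the Rayleigh quotient at the original Perron vector and compare.
\item[(b)] If the shifted family has a rainbow Hamilton cycle, so does the original one. This is the standard swap argument. Take a rainbow cycle $C$ in the shifted family; each edge of $C$ not present in its original graph arose from a shift. Exchanging the roles of $x_j$ and $x_{j'}$ along $C$ gives a rainbow Hamilton cycle in the unshifted family. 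One checks the cases according to whether $x_j$ and $x_{j'}$ both lie on $C$, and in which colors their edges appear.
\end{itemize}
Hence we may assume every $G_i$ is stable: its neighborhoods form chains under inclusion, and each $G_i$ is a bipartite "threshold" (Ferrers) graph.

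\textbf{Step 3 (structure of stable graphs).} For a Ferrers bipartite graph $G$ with degree sequence $d_1\ge\dots\ge d_n$ on $X$, $G$ is Hamiltonian iff the bipartite Chvátal-type condition holds. Concretely, $\rho(G)\ge\rho(B_n)$ with $G$ stable forces $G\supseteq B_n$ (in the shifted labeling) or $G$ Hamiltonian. To see this, compare with the maximal non-Hamiltonian Ferrers graphs, which are exactly the $B^k_n$-type graphs, and use that $\rho(B^k_n)<\rho(B_n)$ for $k\ge2$. That inequality is a routine computation with a quotient matrix, via the edge count $e(B^k_n)=n^2-k(n-k)$ together with an upper bound $\rho\le\sqrt{e}$-type estimate refined for bipartite graphs; it needs $n$ not too small, and small $n$ are checked by hand.

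So each $G_i$ contains the common graph $B_n$: the one where $x_n$ is adjacent only to $y_1$, and $y_n$ is adjacent to all of $X$ except $x_n$. This is the main obstacle: uniformly getting $G_i\supseteq B_n$ with the \emph{same} labeling for all $i$, which is what simultaneous shifting provides.

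\textbf{Step 4 (building the rainbow cycle).} If some $G_{i_0}\ne B_n$, then $G_{i_0}$ has an extra edge $x_ny_j$ with $j\ge2$. Use $x_ny_j$ as the color $i_0$ edge together with $x_ny_1$ in some other color. Complete a Hamilton cycle through the remaining vertices inside the complete part $K_{n-1,n}$ minus a vertex. Assign the remaining $2n-2$ colors greedily: every remaining edge lies in all $G_i$, so any bijection works. This contradicts the choice of counterexample. Hence all $G_i=B_n$ after shifting. Finally, undo the shifts: equality in (a) forces each original $G_i\cong B_n$, and (b) with the cycle argument forces them to be identical. If two of them differ, their special vertices differ, and a rainbow Hamilton cycle is built directly, using the special vertex's second edge from the other graph.
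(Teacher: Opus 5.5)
Your proposal has two genuine gaps.

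\textbf{Step~3.} The decisive gap is here. Step~4 needs every shifted $G_i$ to contain one and the same labelled copy of $B_n$, and you assert that simultaneous shifting supplies this. It does not.

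Take $X=[n]$, $Y=[2n]\setminus[n]$. A bi-shifted copy of $B^1_n$ is either the one whose pendant vertex is $n$ (attached to $n+1$) or the one whose pendant vertex is $2n$ (attached to $1$). A family can contain both types, and then no $K_{n-1,n}$ is common to all the graphs, so Step~4 has nothing to route through.

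Worse, your Ferrers analysis only yields ``$G_i\supseteq B_n$ or $G_i$ Hamiltonian'', and you silently drop the second alternative. That alternative genuinely occurs. The graph $H=K_{n,n}-\{\{n-1,2n\},\{n,2n-1\},\{n,2n\}\}$ is bi-shifted and contains neither copy. For $n=5$ its equitable quotient gives $\lambda^3-4\lambda^2-3\lambda+3$, so $\rho(H)\approx 4.517>\rho(B^1_5)\approx 4.495$. Hamiltonicity of a single $G_i$ says nothing about a rainbow cycle.

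The paper never looks for a common copy. It shows each $S_i$ is connected and contains the staircase edges $e_j=\{j,2n-j+2\}$ for $3\le j\le n-1$; otherwise $S_i\subseteq B^{j-1}_n$, whose spectral radius is too small. Then, if some $S_p$ has $d(n)>1$ and some $S_q$ with $q\neq p$ has $d(2n)>1$, it assembles the explicit zigzag rainbow cycle $1,2n,2,2n-1,\dots,n,n+1$ from these edges and their shifts. In every other case the graphs are forced inside one fixed labelled copy of $B^1_n$, and connectivity of $B^1_n$ gives equality or a contradiction.

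\textbf{The final step.} Once the original $G_i$ are all isomorphic to $B_n$ but not all equal, you still must produce a rainbow Hamilton cycle. This is the paper's Lemma~\ref{lem5.2}, and it is not a one-liner.

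Your claim that two distinct copies have different special vertices is false: they may share the pendant vertex and differ only in its neighbour. Also, ``built directly'' ignores that the other $2n-2$ graphs are copies of $B_n$ with possibly different pendant vertices. So again there is no complete part common to all of them.

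The paper handles this in two stages. It applies Theorem~\ref{thm1.4} to $G_1,\dots,G_{2n-1}$, which is legitimate since $\rho(B^1_n)>\rho(Q^0_n)$, to get a rainbow Hamilton path $u_1\cdots u_{2n}$. It then closes the path by a case analysis on the neighbour $u_s$ of the pendant vertex $u_1$ of $G_{2n}$, rerouting to $u_1\cdots u_{s-1}u_{2n}u_{2n-1}\cdots u_su_1$ when needed.

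Two smaller points:
\begin{itemize}
\item ``Equality in (a) forces $G_i\cong B_n$'' needs the strict inequality of Lemma~\ref{lem2.2} for connected graphs, together with a connectivity check; this is the paper's Lemma~\ref{lem5.1}.
\item Choosing a ``maximally shifted counterexample that is not all equal'' is incoherent, because shifting can merge distinct copies of $B_n$ into identical ones. The contradiction claimed at the end of Step~4 is therefore not available.
\end{itemize}
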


\section{Preliminaries}
In this section, we first give an important technique and several auxiliary results, all of which will be employed in our subsequent arguments.

In extremal set theory, the technique of shifting is one of the most essential and widely-used tools. The shifting operation on graphs, also referred to as the Kelmans operation (see, e.g., \cite{brouwer2}), allows us to focus on sets possessing a particular structural property. Let $G$ be a graph on vertex set $[n].$ Define the $(x, y)$-shift $S_{xy}$ as $S_{xy}(G) = \{S_{xy}(e) :e \in E(G)\},$ where
\begin{equation}
\nonumber
S_{xy}(e)= \left\{
\begin{array}{cl}
(e\setminus\{y\})\cup\{x\},         &\text{if $y\in e,$ $x\notin e$ and $(e\setminus\{y\})\cup\{x\}\notin E(G)$}; \\
e,                     & \text{otherwise.}
\end{array} \right.
\end{equation}
If $S_{xy}(G) = G$ for every pair $(x, y)$ satisfying $x < y,$ then $G$ is said to be shifted, denoted by $S(G).$ By the definition of $(x, y)$-shift, we have $|E(G)|=|E(S_{xy}(G))|.$ In other words, $(x, y)$-shift does not change the number of edges. Moreover, one naturally poses the problem whether it affects the spectral radius of a graph. In 2009, Csikv\'{a}ri\cite{csikvari2009conjecture} proved that the shifting operation does not decrease the spectral radius of a graph.
\begin{lemma}[Csikv\'{a}ri\cite{csikvari2009conjecture}]\label{lem2.1}
    Let $x, y$ be two vertices of $G.$ Then $\rho(S_{xy}(G))\geq \rho(G).$
\end{lemma}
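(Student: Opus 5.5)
The plan is a Rayleigh quotient comparison using the Perron vector of $G$, together with a symmetry argument that lets us assume the Perron entry at $x$ is at least the one at $y$.

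\textbf{Step 1 (what the shift does).} Write $C=N_G(x)\cap N_G(y)$, $A=N_G(x)\setminus(N_G(y)\cup\{y\})$ and $B=N_G(y)\setminus(N_G(x)\cup\{x\})$. By the definition of $S_{xy}$, an edge $e$ is changed only if $y\in e$, $x\notin e$ and $(e\setminus\{y\})\cup\{x\}\notin E(G)$. These are exactly the edges $yb$ with $b\in B$, and each is replaced by $xb$. Every other edge is untouched: edges not meeting $y$, the edge $xy$ if present, and the edges $yc$ with $c\in C$. So in $S_{xy}(G)$ the vertex $x$ is adjacent to $C\cup A\cup B$ (plus $y$ if $xy\in E(G)$), the vertex $y$ is adjacent to $C$ (plus $x$ if $xy\in E(G)$), and all other adjacencies are as in $G$.

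\textbf{Step 2 (reduction by symmetry).} The roles of $x$ and $y$ are interchangeable up to isomorphism. In $S_{yx}(G)$ the vertex $y$ receives $C\cup A\cup B$ and $x$ keeps only $C$ (plus the edge $xy$ if present). Hence the transposition of $x$ and $y$ is an isomorphism $S_{xy}(G)\cong S_{yx}(G)$, and in particular $\rho(S_{xy}(G))=\rho(S_{yx}(G))$.

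\textbf{Step 3 (Rayleigh quotient).} Let $\mathbf f$ be a nonnegative unit eigenvector of $A(G)$ for $\rho(G)$, which exists by Perron--Frobenius; connectivity is not needed. By Step 2 we may assume $f_x\ge f_y$: otherwise we argue with $S_{yx}(G)$, which has the same spectral radius. Only the pairs $\{y,b\}$ and $\{x,b\}$ with $b\in B$ change, so
\[
\mathbf f^{T}A(S_{xy}(G))\mathbf f-\mathbf f^{T}A(G)\mathbf f=2\sum_{b\in B}f_b\,(f_x-f_y)\ge 0 .
\]
The Rayleigh principle then gives
\[
\rho(S_{xy}(G))\ge \mathbf f^{T}A(S_{xy}(G))\mathbf f\ge \mathbf f^{T}A(G)\mathbf f=\rho(G).
\]

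\textbf{Main obstacle.} The delicate point is the bookkeeping in Step 1. One must check that the shifted graph is again simple: no moved edge duplicates an existing one, since the rule only moves $yb$ when $xb\notin E(G)$, and $b\ne x$ so no loop appears. One must also check that the edge $xy$ and the edges to common neighbours are fixed. Once this is settled, the isomorphism in Step 2 and the sign computation in Step 3 are immediate.
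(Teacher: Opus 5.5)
Your proof is correct and follows essentially the same route as Csikv\'{a}ri's original argument; the paper itself gives no proof of this lemma and simply cites it. That argument also takes a nonnegative Perron vector, uses $S_{xy}(G)\cong S_{yx}(G)$ to assume $f_x\ge f_y$, and observes that the Rayleigh quotient increases by exactly $2(f_x-f_y)\sum_{b\in B}f_b\ge 0$.
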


Guo et al.\cite{guo2023spectral} determined changes of the spectral radius after $(x, y)$-shift operation in connected graphs.
\begin{lemma}[Guo et al.\cite{guo2023spectral}]\label{lem2.2}
    Let $G$ be a connected graph on vertex set $[n].$ Let $x, y$ be two vertices of $G.$ Then $\rho(S_{xy}(G))> \rho(G)$ unless $G\cong S_{xy}(G).$
\end{lemma}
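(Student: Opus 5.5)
The plan is a Rayleigh-quotient comparison using the Perron vector of $G$. Below, "vertex $x$" and "vertex $y$" refer to the two shifted vertices, and the Perron vector is written as $\mathbf{z}$ to avoid a clash of notation.

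\textbf{Setup.} Put
\[
A=N_G(y)\setminus (N_G(x)\cup\{x\}).
\]
By definition, $S_{xy}(G)$ is obtained from $G$ by replacing each edge $yz$ with $z\in A$ by the edge $xz$, and leaving every other edge alone. So if $A=\emptyset$ then $S_{xy}(G)=G$, and there is nothing to prove. Assume from now on that $A\neq\emptyset$.

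\textbf{Symmetry reduction.} Compare $S_{xy}(G)$ with $S_{yx}(G)$. In $S_{xy}(G)$ the vertex $x$ gets neighbourhood $(N_G(x)\cup N_G(y))\setminus\{x\}$ and $y$ gets $(N_G(x)\cap N_G(y))\cup(\{x\}\cap N_G(y))$. In $S_{yx}(G)$ the same holds with the roles of $x$ and $y$ exchanged. All other adjacencies are the same in both graphs. Hence the transposition of $x$ and $y$ is an isomorphism $S_{xy}(G)\cong S_{yx}(G)$. Spectral radius and the conclusion "$G\cong S_{xy}(G)$" are invariant under isomorphism, so we may swap the names of $x$ and $y$ if needed. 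Doing this, we may assume $z_x\ge z_y$, where $\mathbf{z}$ is the unit Perron vector of $A(G)$. Since $G$ is connected, Perron--Frobenius gives $\mathbf{z}>0$ entrywise.

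\textbf{Rayleigh comparison.} By the Rayleigh principle,
\[
\rho(S_{xy}(G))-\rho(G)\ \ge\ \mathbf{z}^{T}A(S_{xy}(G))\mathbf{z}-\mathbf{z}^{T}A(G)\mathbf{z}=2\sum_{w\in A}(z_x-z_y)\,z_w\ \ge\ 0.
\]
This already recovers Lemma \ref{lem2.1} in the connected case.

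\textbf{Equality case.} Suppose $\rho(S_{xy}(G))=\rho(G)=:\rho$. Then $\mathbf{z}$ attains the maximum of the Rayleigh quotient of the symmetric matrix $A(S_{xy}(G))$. Hence $\mathbf{z}$ is an eigenvector there: $A(S_{xy}(G))\mathbf{z}=\rho\,\mathbf{z}$. Read this identity at the coordinate $y$. Since $y$ loses exactly the neighbours in $A$ and gains none,
\[
\rho z_y=(A(S_{xy}(G))\mathbf{z})_y=(A(G)\mathbf{z})_y-\sum_{w\in A}z_w=\rho z_y-\sum_{w\in A}z_w .
\]
This forces $\sum_{w\in A}z_w=0$. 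But $\mathbf{z}>0$ and $A\neq\emptyset$, a contradiction. Therefore the inequality is strict whenever $A\neq\emptyset$. When $A=\emptyset$ we have $G=S_{xy}(G)$, which is the stated exception.

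\textbf{Main obstacle.} The only delicate point is the symmetry reduction, i.e.\ checking carefully that $S_{xy}(G)\cong S_{yx}(G)$, including the edge $xy$ when it is present. This is what lets us assume $z_x\ge z_y$. Without it, the sign of $z_x-z_y$ in the Rayleigh comparison would be uncontrolled.
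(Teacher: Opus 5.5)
Your argument is correct. The paper gives no proof of this lemma and simply quotes it from Guo et al.; your Perron-vector/Rayleigh-quotient argument is the standard proof behind that citation, following Csikv\'{a}ri. It uses $S_{xy}(G)\cong S_{yx}(G)$ to assume $z_x\ge z_y$, then reads the eigen-equation at $y$ to exclude equality.

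The only nit is presentational: impose $A\neq\emptyset$ after the relabeling, not before. The swapped pair's set $N_G(x)\setminus(N_G(y)\cup\{y\})$ can be empty even when the original $A$ is not; in that case $S_{yx}(G)=G$, so $S_{xy}(G)\cong G$. Your final sentence on the case $A=\emptyset$ already covers this.
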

For a bipartite graph, to ensure that the graph remains bipartite after the $(x, y)$-shift, we must choose the vertices $x$ and $y$ from the same part. Let $G$ be a bipartite graph on vertex set $[n].$ If $S_{xy}(G) = G$ for every pair $(x, y)$ in the same part of $G$ satisfying $x < y,$ then $G$ is said to be bi-shifted. The following observation shows a basic property of $(x, y)$-shift for a bipartite graph $G.$

\begin{observation}\label{obs::1}
Let $G$ be a bipartite graph on vertex set $[n]$ with the partition $(X, Y).$  If $G$ is bi-shifted, then for any $\{x_1, x_2\}\subseteq X$ and $\{y_1, y_2\} \subseteq Y$ such that $x_1 \leq x_2$ and $y_1\leq y_2,$ $\{x_2, y_2\} \in E(G)$ implies $\{x_1, y_1\} \in E(G).$
\end{observation}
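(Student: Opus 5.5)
The observation should follow from the definition of bi-shifted graphs by at most two single shifts, one in each part. I will first argue one coordinate at a time and then combine the two steps.

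\emph{Step 1: moving the $Y$-endpoint.} Suppose $\{x_2,y_2\}\in E(G)$ and $y_1<y_2$ with $y_1,y_2\in Y$. Since $G$ is bi-shifted, $S_{y_1y_2}(G)=G$. Consider the edge $e=\{x_2,y_2\}$. It contains $y_2$ and does not contain $y_1$. If $\{x_2,y_1\}\notin E(G)$, the definition of the shift gives $S_{y_1y_2}(e)=\{x_2,y_1\}$. This new edge is not in $E(G)$, so $S_{y_1y_2}(G)$ contains an edge that $G$ does not. That contradicts $S_{y_1y_2}(G)=G$, which is an equality of edge sets. Hence $\{x_2,y_1\}\in E(G)$. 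If $y_1=y_2$, there is nothing to do.

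\emph{Step 2: moving the $X$-endpoint.} Now $\{x_2,y_1\}\in E(G)$, and we may assume $x_1<x_2$ with $x_1,x_2\in X$. Apply the same argument to the shift $S_{x_1x_2}$ and the edge $\{x_2,y_1\}$. If $\{x_1,y_1\}$ were absent, the shift would replace $\{x_2,y_1\}$ by $\{x_1,y_1\}$, producing an edge outside $E(G)$. This contradicts $S_{x_1x_2}(G)=G$. Therefore $\{x_1,y_1\}\in E(G)$.

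The only point needing care is the set-equality reading of $S_{xy}(G)=G$. One might worry that the shift could move some other edge onto the vacated position. The contradiction does not depend on this: it comes from the image $S_{y_1y_2}(G)$ (respectively $S_{x_1x_2}(G)$) containing the edge $\{x_2,y_1\}$ (respectively $\{x_1,y_1\}$), which is not in $E(G)$. Apart from this, the argument is routine; it uses only the definitions and no earlier lemma.
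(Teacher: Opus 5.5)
Your proposal is correct. The paper states this observation without proof, and your argument is exactly the routine check it leaves implicit: first apply $S_{y_1y_2}$ to $\{x_2,y_2\}$ to get $\{x_2,y_1\}\in E(G)$, then apply $S_{x_1x_2}$ to get $\{x_1,y_1\}\in E(G)$, each time contradicting $S_{xy}(G)=G$ if the target edge were missing.
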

Iterating the $(x, y)$-shift for all pairs $(x, y)$ satisfying $x < y$ will eventually produce a shifted graph (see \cite{frankl1987} and \cite{frankl1995shifting}). Similarly, for a bipartite graph $G$, it can be verified that by repeatedly applying the shifting operation in the same part of $G$, one can obtain a bi-shifted graph.

\vspace{3mm}
Let $S_{xy}(\mathcal{G})=\{S_{xy}(G_1), S_{xy}(G_2), \ldots, S_{xy}(G_n)\}$ and $S(\mathcal{G})=\{S(G_1), S(G_2), \ldots , S(G_n)\}.$
\begin{lemma}[He, Li and Feng\cite{he2024spectral}]\label{lem2.3}
    Given a family of graphs $\mathcal{G}=\{G_1, G_2, \ldots,$ $ G_{n-1}\}$ on vertex set $[n].$ Let $x<y$ be two vertices in $[n].$ If $S_{xy}(\mathcal{G})$ admits a rainbow Hamilton path, then so does $\mathcal{G}.$
\end{lemma}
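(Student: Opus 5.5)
The plan is to start from a rainbow Hamilton path $P$ of $S_{xy}(\mathcal{G})$ and repair it locally around $x$ and $y$, leaving all colours unchanged. Say each edge $e$ of $P$ has colour $c(e)$, so that $e\in S_{xy}(G_{c(e)})$. Everything rests on the following description of $S_{xy}(G_i)$, which is read off directly from the definition.

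\begin{itemize}
\item[(a)] If $e\in S_{xy}(G_i)$ and $x\notin e$, then $e\in G_i$. Moreover, if $y\in e$ and $x\notin e$, then $(e\setminus\{y\})\cup\{x\}\in G_i$ as well, because otherwise $e$ would have been shifted.
\item[(b)] If $e=xy$, then $e\in S_{xy}(G_i)$ iff $e\in G_i$.
\item[(c)] If $x\in e$ and $y\notin e$, then either $e\in G_i$ (call $e$ \emph{old}), or $e\notin G_i$ and $(e\setminus\{x\})\cup\{y\}\in G_i$ (call $e$ \emph{new}).
\end{itemize}

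So the only edges of $P$ that may fail to lie in their own graph are the new edges at $x$, and there are at most two of them. If there are none, $P$ itself is the required rainbow path in $\mathcal{G}$.

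Next, suppose every non-$xy$ edge of $P$ at $x$ is new. Apply the transposition $\tau=(x\,y)$ to $P$ and keep the colours. Three kinds of edges change.
\begin{itemize}
\item Each new edge $xu$ becomes $yu\in G_{c(xu)}$, by (c).
\item Each edge $yv$ with $v\neq x$ becomes $xv\in G_{c(yv)}$, by (a).
\item The edge $xy$, if it is on $P$, is fixed and lies in its graph by (b).
\end{itemize}
All other edges avoid $x$ and lie in their graphs by (a). Hence $\tau(P)$ is a rainbow Hamilton path of $\mathcal{G}$.

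The remaining, mixed case is where I expect the real work: $x$ is an interior vertex of $P$ with neighbours $u$ and $w$, where $xu$ is new of colour $i$ (so $yu\in G_i$) and $xw$ is old of colour $j$ (so $xw\in G_j$). The idea is to reroute $P$ by reversing a segment, so that $u$ is attached to $y$ while $x$ keeps $w$.

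\begin{itemize}
\item \textbf{$w=y$.} Then $P=\cdots u\,x\,y\,v\cdots$. Replace the block $u\,x\,y\,v$ by $u\,y\,x\,v$, with $uy$ coloured $i$, $yx$ coloured $j$ and $xv$ coloured $c(yv)$. Here $xv\in G_{c(yv)}$ by (a), and $xy\in G_j$ by (b).
\item \textbf{$y$ on the $w$-side.} Write $P=A\,u\,x\,w\,B\,v\,y\,v'\,C$, where $v'$ and $C$ may be empty. Take $A\,u\,y\,v\,B^{-1}\,w\,x\,v'\,C$. The edges $uy$, $yv$, $wx$, $xv'$ keep the colours of $xu$, $yv$, $xw$, $yv'$ respectively, and each lies in the correct graph by (a) and (c).
\item \textbf{$y$ on the $u$-side.} Write $P=A\,v'\,y\,v\,B\,u\,x\,w\,C$. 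Take $A\,v'\,y\,u\,B^{-1}\,v\,x\,w\,C$, which uses $yu\in G_i$, $vx\in G_{c(yv)}$ and $xw\in G_j$.
\item \textbf{Degenerate subcases.} These are $u=y$, or $y$ an endpoint so that $v$ or $v'$ is missing. They are handled by the same reversals with the absent pieces dropped.
\end{itemize}

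In every case the colour set is unchanged, so the result is a rainbow Hamilton path of $\mathcal{G}$. The main care needed is checking these path-reversal subcases exhaustively.
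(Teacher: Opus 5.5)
Your proof is correct. Facts (a)--(c) are exactly right. In particular, an edge of $S_{xy}(G_i)$ that misses $x$ must be an unshifted edge of $G_i$, and if it contains $y$ then its shifted copy is already in $G_i$. Hence only the new edges at $x$ can be defective.

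In each case your rearrangement keeps the colour set and puts every edge in its own graph. I checked the segment reversals, including the boundary situations $v=w$ and $v=u$ (in the latter your move just swaps the colours of $yu$ and $ux$), and the case where $y$ is an endpoint.

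The paper gives no proof of this lemma; it is quoted from He, Li and Feng. So there is no in-text argument to compare against. Your local repair is a complete, self-contained proof: transpose $x$ and $y$ when no non-$xy$ edge at $x$ is old, and otherwise reverse the segment between $x$ and $y$.

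A few bookkeeping slips, none affecting validity:
\begin{itemize}
\item The subcase $w=y$ does not belong to the mixed case, since $xy$ is neither old nor new. It is already covered by the transposition, and your replacement there is just $\tau$.
\item The subcase $u=y$ cannot occur, because a new edge avoids $y$.
\item Of $v$ and $v'$, only $v'$ can be missing.
\end{itemize}
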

\begin{lemma}[Zhang and van Dam\cite{zhang2025}]\label{lem2.4}
    Let $\mathcal{G}=\{G_1, G_2, \ldots, G_n\}$ be a family of graphs on vertex set $[n].$ If $S(\mathcal{G})$ admits a rainbow Hamilton cycle, then so does $\mathcal{G}.$
\end{lemma}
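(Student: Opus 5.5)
The statement to prove is Lemma~\ref{lem2.4}: if the shifted family $S(\mathcal{G})$ has a rainbow Hamilton cycle, then so does $\mathcal{G}$. Since $S(\mathcal{G})$ is reached from $\mathcal{G}$ by finitely many $(x,y)$-shifts, I will reduce to one shift. That is, I will prove that for $x<y$, if $S_{xy}(\mathcal{G})$ has a rainbow Hamilton cycle then so does $\mathcal{G}$, and then induct on the number of shifts. This is the cycle analogue of Lemma~\ref{lem2.3}, and I will follow the same exchange argument.

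\textbf{Setup.} Let $C$ be a rainbow Hamilton cycle in $S_{xy}(\mathcal{G})=\{S_{xy}(G_1),\dots,S_{xy}(G_n)\}$, with edge $e$ coloured by $\phi(e)$. Call an edge $e$ of $C$ \emph{bad} if $e\notin E(G_{\phi(e)})$. By the definition of the shift, a bad edge contains $x$ and not $y$, say $e=xz$. Moreover $yz\in E(G_{\phi(e)})$ and $xz\notin E(G_{\phi(e)})$. Since $x$ has degree $2$ in $C$, there are at most two bad edges.

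\textbf{Case analysis.} Let $x$ have neighbours $a,b$ on $C$, and $y$ have neighbours $c,d$.

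\emph{No bad edge.} Then $C$ already works in $\mathcal{G}$.

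\emph{Otherwise, try swapping $x$ and $y$.} Let $C'$ be the cycle obtained from $C$ by exchanging the roles of $x$ and $y$, keeping colours. Each bad edge $xz$ becomes $yz$, which lies in the right graph.

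If $x,y$ are not adjacent on $C$, then $C'$ uses edges $ya,yb,xc,xd$. An edge $yw$ of $C$ with colour $j$ lies in $S_{xy}(G_j)$. Since the shift removes $yw$ only when it moves it, $yw\in S_{xy}(G_j)$ means $yw\in G_j$ and $xw\in G_j$. Hence $xc,xd$ lie in the right graphs.

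For an edge $xa$ that was good, we have $xa\in G_j$, but we also need $ya\in G_j$. If $ya\notin G_j$, I instead keep $x$ in its place for that edge. This is where care is needed, since the swap must be done consistently for the whole cycle.

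I therefore split further:
\begin{itemize}
\item If both $xa,xb$ are bad, swap fully.
\item If exactly one, say $xa$, is bad and $xb$ is good with colour $j$: either $yb\in G_j$ and the full swap works, or $yb\notin G_j$. In the latter case $xb$ in $S_{xy}(G_j)$ together with $yb\notin G_j$ is consistent. Then use $C$ with $x,y$ positions exchanged only partially, i.e.\ re-route the cycle along $a\,y$ and the $y$ segment. Concretely, consider the paths obtained by deleting the edges at $x$ and $y$, and reconnect them using the fact that $y$ is adjacent (in the right colours) to everything $x$ was and $x$ to everything $y$ was.
\end{itemize}
The adjacent case $x\sim y$ on $C$ is handled similarly: the edge $xy$ is fixed by the shift, and one neighbour each remains.

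\textbf{Main obstacle.} The main obstacle is this mixed situation: one bad edge at $x$ and one good edge at $x$ whose $y$-counterpart is absent. Here a simple vertex swap does not work, and a rerouting of the cycle is needed. For paths (Lemma~\ref{lem2.3}) the endpoint freedom makes this easier. For cycles I would first try the argument of the path case combined with a case split on the cyclic order of $a,x,b$ and $c,y,d$. If that fails, I would use the extra colour and the Hamiltonicity to find a chord exchange.
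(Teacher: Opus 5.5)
\textbf{There is a genuine gap: the mixed case of the single-shift step is left unresolved.} Your reduction to a single simultaneous shift $S_{xy}$ and your bookkeeping are correct. A bad edge of colour $i$ is some $xz$ with $xz\notin G_i$ and $yz\in G_i$. An edge $yw$ ($w\neq x$) of colour $l$ on $C$ forces $yw,xw\in G_l$. The following cases are correctly settled by keeping $C$ or swapping $x$ and $y$: no bad edge; both edges at $x$ bad; $x\sim y$ on $C$; one bad edge $xa$ with $yb\in G_j$.

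The remaining case is $xa$ bad in colour $i$, $xb$ good in colour $j$, and $yb\notin G_j$. You identify it yourself as the main obstacle, but you do not resolve it. The reconnection you sketch relies on ``$y$ is adjacent (in the right colours) to everything $x$ was'', and that is exactly what fails here, at $b$. Your last paragraph only lists strategies you would try. Also, there is no ``extra colour'' to exploit: $C$ has $n$ edges and uses all $n$ graphs. So the single-shift step, and hence the lemma, is not proved as written.

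The missing idea is that $x$ need not be moved wholesale. The single bad edge $xa$ can be traded against one edge at $y$ by reversing a segment of $C$ (a 2-opt move). Traverse $C$ from $x$ through $b$, and call $c$ and $d$ the vertices just before and just after $y$, so $C=x\,b\cdots c\,y\,d\cdots a\,x$. Let $l$ be the colour of $yd$. Delete $xa$ and $yd$, and insert $ya$ with colour $i$ and $xd$ with colour $l$. This gives the Hamilton cycle $x\,b\cdots c\,y\,a\cdots d\,x$, and every edge lies in the right graph:
\begin{itemize}
\item $ya\in G_i$ because $xa$ was bad;
\item $xd\in G_l$ because $yd$ survived the shift in $G_l$;
\item all other edges keep their colours and lie in the original graphs, since the only other edge at $x$ is the good edge $xb$ and $yc$ survived the shift.
\end{itemize}
If $d=a$, just exchange the colours of $xa$ and $ya$; again $ya\in G_i$ and $xa\in G_l$.

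This argument needs no information about $yb$, so it closes the mixed case. Combined with your other cases and the induction over a shifting sequence, it proves the lemma. For the reduction to work, $S(\mathcal{G})$ should be read as the family obtained by applying the same shifts $S_{xy}$ to all $G_i$ simultaneously. Note that the paper gives no proof of this lemma and simply cites Zhang and van Dam.
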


We will use the following spectral inequality for bipartite graphs, which is a direct corollary of a result of Nosal \cite{nosal1970eigenvalues}. (See also \cite{bhattacharya2008first}.)
\begin{lemma}[Nosal \cite{nosal1970eigenvalues}, Bhattacharya, Friedland and Peled \cite{bhattacharya2008first}]\label{lem2.5}
Let $G$ be a bipartite graph. Then $$\rho(G) \leq \sqrt{|E(G)|}.$$
\end{lemma}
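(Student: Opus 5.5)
The plan is to compare the two largest-in-modulus eigenvalues of $A(G)$ with the trace of $A(G)^2$. The key point is that a bipartite spectrum is symmetric about $0$, so $\rho(G)^2$ is counted twice in $\sum_i \lambda_i^2$.

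Let $m=|E(G)|$ and write $\lambda_1\geq\lambda_2\geq\cdots\geq\lambda_N$ for the eigenvalues of the real symmetric matrix $A=A(G)$, where $N=|V(G)|$. If $m=0$ then $A=0$, so $\rho(G)=0$ and there is nothing to prove. So I assume $m\geq 1$.

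\textbf{Step 1: $\rho(G)$ is an eigenvalue.} Since $A$ is entrywise nonnegative, the Perron--Frobenius theorem gives that $\rho(G)$ is itself an eigenvalue of $A$, namely $\lambda_1=\rho(G)$. This holds even if $G$ is disconnected, by applying the theorem to a component attaining the maximum.

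\textbf{Step 2: symmetry of the spectrum.} Let $(X,Y)$ be the bipartition and order the vertices so that
\[
A=\begin{pmatrix} 0 & B\\ B^{T} & 0\end{pmatrix}.
\]
If $(u,w)^{T}$ is an eigenvector for $\lambda$, then $Bw=\lambda u$ and $B^{T}u=\lambda w$. Hence $(u,-w)^{T}$ is an eigenvector for $-\lambda$. This map is a linear isomorphism between the $\lambda$- and $(-\lambda)$-eigenspaces, so $\lambda$ and $-\lambda$ have the same multiplicity. In particular $-\rho(G)$ is an eigenvalue.

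Moreover $\rho(G)>0$ when $m\geq1$, since $\operatorname{tr}(A^2)=2m>0$ forces some eigenvalue to be nonzero. Therefore $\rho(G)$ and $-\rho(G)$ are two distinct entries among $\lambda_1,\dots,\lambda_N$.

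\textbf{Step 3: trace identity.} Because $A$ is a symmetric $0/1$ matrix with zero diagonal,
\[
\sum_{i=1}^{N}\lambda_i^2=\operatorname{tr}(A^2)=\sum_{v}d(v)=2m.
\]
Every term in this sum is nonnegative. Keeping only the two terms coming from $\rho(G)$ and $-\rho(G)$ gives $2\rho(G)^2\leq 2m$, that is, $\rho(G)\leq\sqrt{m}$.

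There is no genuine obstacle here. The only point needing care is Step 2, where one must check that $-\rho(G)$ occurs as a separate eigenvalue rather than coinciding with $\rho(G)$. This is exactly why the case $m=0$ is handled separately.
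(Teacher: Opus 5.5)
Your proof is correct and complete. The block form gives the symmetry $\lambda\leftrightarrow-\lambda$ with equal multiplicities. For $m=|E(G)|\geq 1$ you have $\rho(G)>0$, so $\rho(G)$ and $-\rho(G)$ are two distinct spectral values, and $\operatorname{tr}(A^2)=2m$ then gives $2\rho(G)^2\leq 2m$. Your Step 1 is not actually needed: $\rho(G)$ is by definition $\max_i|\lambda_i|$, and symmetry of the spectrum already puts both $\pm\rho(G)$ in it.

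The paper gives no proof of this lemma. It quotes it as a direct corollary of Nosal's theorem that every triangle-free graph satisfies $\rho(G)\leq\sqrt{|E(G)|}$. That theorem covers a strictly larger class, where the spectrum need not be symmetric, so it needs a different idea. One takes a nonnegative Perron vector $x$ and uses that adjacent vertices $u,v$ have disjoint neighbourhoods, so $\rho(G)(x_u+x_v)\leq\sum_w x_w$. Summing over all edges and using $\sum_v d(v)x_v=\rho(G)\sum_v x_v$ gives $\rho(G)^2\sum_w x_w\leq m\sum_w x_w$.

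Your route works only for bipartite graphs, but it is shorter and purely linear-algebraic. It also gives the equality case directly: $\rho(G)^2=m$ forces $\pm\rho(G)$ to be simple and every other eigenvalue to be $0$. Then $A(G)$ has rank $2$, so $G$ is a complete bipartite graph plus isolated vertices. The paper applies the bound only to bipartite graphs ($Q^k_n$, $T^k_n$, $B^k_n$ and the $G_i$), so your version fully suffices for its purposes.
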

Let $A$ be a symmetric real matrix whose rows and columns are indexed by $X=[n].$ Given a partition $\Pi:$ $X=X_1\cup X_2\cup \dots \cup X_m,$ the matrix $A$ is denoted by
\begin{eqnarray*}
A=\begin{bmatrix}
   A_{11}&\dots&A_{1m}\\\vdots &\ddots&\vdots\\A_{m1}&\dots&A_{mm}
\end{bmatrix},
\end{eqnarray*}
where $A_{ij}$ is the submatrix of $A$ with respect to rows in $X_i$ and columns in $X_j.$ Let $B_{\Pi}$ be a matrix of order $m$ whose $(i,j)$-entry equals the average row sum of $A_{ij}.$ Then $B_{\Pi}$ is called a quotient matrix of $A$ corresponding to this partition. If the row sum of each block $A_{ij}$ is constant, then the partition $\Pi$ is equitable.
\begin{lemma}[Brouwer and Haemers\cite{brouwer2}, Godsil and Royle\cite{Godsil}]\label{lem2.6} Let $A$ be a real symmetric matrix and $\rho(A)$ be its largest eigenvalue. Let $B_{\Pi}$ be an equitable quotient matrix of $A$. Then the eigenvalues of $B_{\Pi}$ are also eigenvalues of $A$. Furthermore, if $A$ is nonnegative and irreducible, then $\rho(A)=\rho(B_{\Pi}).$
\end{lemma}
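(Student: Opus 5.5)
The plan is the standard characteristic-matrix argument. Let $S$ be the $n\times m$ characteristic matrix of the partition $\Pi$: $S_{uj}=1$ if $u\in X_j$ and $S_{uj}=0$ otherwise. Since the parts $X_1,\dots,X_m$ are nonempty and pairwise disjoint, the columns of $S$ have disjoint nonempty supports. Hence $S$ has full column rank $m$, and $Sv\neq 0$ whenever $v\neq 0$.

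The first step is to translate equitability into the matrix identity $AS=SB_{\Pi}$. Take $u\in X_i$ and a column index $j$. The $(u,j)$-entry of $AS$ is $\sum_{w\in X_j}a_{uw}$, the row sum of row $u$ within the block $A_{ij}$. By equitability this row sum is constant over $u\in X_i$, so it equals the average row sum $(B_{\Pi})_{ij}$. This is exactly the $(u,j)$-entry of $SB_{\Pi}$.

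Next, the eigenvalue inclusion follows directly. If $B_{\Pi}v=\lambda v$ with $v\neq 0$ (possibly complex), then
\[
A(Sv)=SB_{\Pi}v=\lambda Sv .
\]
Since $Sv\neq 0$, $\lambda$ is an eigenvalue of $A$ with eigenvector $Sv$, the vector $v$ "lifted" to be constant on each part.

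For the Perron statement, assume $A$ is nonnegative and irreducible. Then $B_{\Pi}$ is entrywise nonnegative, being a matrix of averages of nonnegative entries. By the Perron--Frobenius theorem for nonnegative matrices, irreducibility not being needed here, $B_{\Pi}$ has a nonzero eigenvector $v\geq 0$ with eigenvalue $\rho(B_{\Pi})$. The lifted vector $Sv$ is then a nonzero nonnegative eigenvector of $A$ with eigenvalue $\rho(B_{\Pi})$. For an irreducible nonnegative matrix, the Perron--Frobenius theorem says that any nonnegative eigenvector is a positive multiple of the Perron vector, and hence belongs to $\rho(A)$. Therefore $\rho(B_{\Pi})=\rho(A)$.

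The only delicate point is this last step: ensuring that the eigenvalue of $A$ produced by lifting is the largest one, and not merely some eigenvalue. The nonnegativity of the lifted Perron vector, together with the uniqueness of nonnegative eigenvectors for irreducible nonnegative matrices, settles it. Alternatively, since every eigenvalue of $B_{\Pi}$ is an eigenvalue of $A$, one already has $\rho(B_{\Pi})\leq\rho(A)$, and the lifted nonnegative eigenvector supplies the reverse inequality.
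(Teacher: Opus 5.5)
Your proof is correct and is the standard argument from the cited sources; the paper itself only cites Brouwer--Haemers and Godsil--Royle and gives no proof. Equitability gives $AS=SB_{\Pi}$ for the characteristic matrix $S$, so eigenvectors of $B_{\Pi}$ lift to eigenvectors of $A$, and the lifted nonnegative Perron vector of $B_{\Pi}$ must belong to $\rho(A)$ by irreducibility.
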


\begin{lemma}\label{lem2.8}
  (i) For $1\leq k\leq n-2,$ we have $\rho(Q^k_n)<\rho(Q^0_n)$ and $\rho(T^k_n)<\rho(T^0_n).$\\
  (ii) For $2\leq k\leq n-2,$ we have $\rho(B^k_n)<\rho(B^1_n).$
\end{lemma}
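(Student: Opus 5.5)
The plan is to avoid computing the quotient matrices in full. Instead I compare the Nosal bound of Lemma \ref{lem2.5} with explicit complete bipartite subgraphs, and add a strict form of Lemma \ref{lem2.5} for the one boundary case where the edge counts tie.

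\textbf{Edge counts.} Write each graph as $K_{a,b}\sqcup \widehat{K_{c,d}}$ with parts $X_1,Y_1$ (sizes $a,b$) and $X_2,Y_2$ (sizes $c,d$). Then $X_1$ is joined to all of $Y=Y_1\cup Y_2$, $Y_1$ is joined to all of $X=X_1\cup X_2$, and the only missing $X$--$Y$ pairs are those between $X_2$ and $Y_2$. Hence $|E|=(a+c)(b+d)-cd$. This gives
\[
|E(Q^k_n)|=n(n-1)-k(n-k-1),\qquad |E(T^k_n)|=kn+(n-k-1)^2,\qquad |E(B^k_n)|=n^2-k(n-k).
\]
Also $Q^0_n=K_{n,n-1}\cup K_1$ and $T^0_n=K_{n-1,n-1}\cup K_1$, so $\rho(Q^0_n)=\sqrt{n(n-1)}$ and $\rho(T^0_n)=n-1$.

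\textbf{Part (i).} For $1\le k\le n-2$ we have $k(n-k-1)>0$, so Lemma \ref{lem2.5} gives $\rho(Q^k_n)\le\sqrt{n(n-1)-k(n-k-1)}<\rho(Q^0_n)$.

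For $T^k_n$ we have $(n-1)^2-|E(T^k_n)|=k(2n-k-2)-kn=k(n-k-2)$. When $k\le n-3$ this is positive, and Lemma \ref{lem2.5} again finishes.

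The obstacle is $k=n-2$, where $|E(T^{n-2}_n)|=(n-1)^2$ exactly. Here I need the strict version: $\rho(G)^2=|E(G)|$ only if $G$ is a complete bipartite graph plus isolated vertices. I would prove it via the nonzero eigenvalues $\pm\sigma_i$ of a bipartite graph, where the $\sigma_i$ are the singular values of the biadjacency matrix $M$. Since $\sum_i\sigma_i^2=\operatorname{tr}(MM^T)=|E|$, equality $\sigma_1^2=|E|$ forces $M$ to have rank one. A $0/1$ matrix of rank one is an all-ones block, so its edges form a complete bipartite graph.

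$T^{n-2}_n$ is not of this form: $X_2=\{x\}$ is adjacent only to $Y_1$, while $X_1$ is adjacent to all of $Y$ and $Y_2\neq\emptyset$. Hence $\rho(T^{n-2}_n)<n-1$.

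\textbf{Part (ii).} Here Lemma \ref{lem2.5} alone is insufficient, because $B^1_n$ is not complete bipartite. So I bound $\rho(B^1_n)$ from below instead. In $B^1_n$ the set $Y_1$ (of size $n-1$) is joined to all $n$ vertices of $X$, so $B^1_n\supseteq K_{n,n-1}$. Since $B^1_n$ is connected and strictly contains this subgraph, Perron--Frobenius gives $\rho(B^1_n)>\sqrt{n(n-1)}$.

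On the other hand, for $2\le k\le n-2$ (so $n\ge4$) we have $k(n-k)\ge 2(n-2)$. Lemma \ref{lem2.5} then gives $\rho(B^k_n)\le\sqrt{n^2-2n+4}$. Since $n^2-2n+4\le n^2-n$ exactly when $n\ge4$, we get $\rho(B^k_n)\le\sqrt{n(n-1)}<\rho(B^1_n)$. The strict inequality comes from the Perron--Frobenius step, which also covers the tight case $n=4$, $k=2$.
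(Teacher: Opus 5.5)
Your proof is correct. Its skeleton is the same as the paper's: compute the edge counts of $K_{a,b}\sqcup\widehat{K_{c,d}}$ and feed them into Lemma~\ref{lem2.5}. This is exactly how the paper handles $Q^k_n$, $T^k_n$ with $k\le n-3$, and part (ii).

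\textbf{The case $T^{n-2}_n$.} This is the one genuinely different step, where the edge count ties with $(n-1)^2$.
\begin{itemize}
\item The paper writes down the $4\times 4$ equitable quotient matrix and computes its characteristic polynomial $x^4-(n-1)^2x^2+(n-1)(n-2)$. It then checks that this polynomial is positive for $x\ge n-1$.
\item You instead prove the equality case of Nosal's bound: $\sum_i\sigma_i^2=|E|$ forces a rank-one biadjacency matrix, hence a complete bipartite graph plus isolated vertices. You then observe that $T^{n-2}_n$ has two non-isolated vertices in $X$ with different neighbourhoods.
\end{itemize}
Your route is computation-free and gives a reusable criterion. The paper's is concrete, but it is tailored to this one graph.

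\textbf{Part (ii).} Here your placement of the strict inequality is better than the paper's. The paper's chain uses only $\rho(Q^0_n)\le\rho(B^1_n)$ and asserts $\sqrt{n^2-2n+4}<\sqrt{n(n-1)}$. That inequality is an equality when $n=4$ (both sides are $\sqrt{12}$), so the written argument does not cover $n=4$, $k=2$.

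That case is actually needed: it arises in the proof of Theorem~\ref{thm1.6} with $n=4$, $p=3$. Your strict Perron--Frobenius step $\rho(B^1_n)>\sqrt{n(n-1)}$ closes this gap. The paper itself invokes that fact at the start of the proof of Theorem~\ref{thm1.6}, but not in the lemma.
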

\begin{proof}
(i) By calculation, we have $\rho(Q^0_n)=\sqrt{n(n-1)}$ and $\rho(T^0_n)=n-1.$ Combining Lemma \ref{lem2.5} and $1\leq k\leq n-2,$ we have
\begin{eqnarray*}
\rho(Q^k_n) \leq \sqrt{|E(Q^k_n)|} &=& \sqrt{nk+(n-k)(n-k-1)} \\
&=&\sqrt{k^2-(n-1)k+n(n-1)}\\
&<& \sqrt{n(n-1)}=\rho(Q^0_n).
\end{eqnarray*}

For $1\leq k\leq n-3,$ by Lemma \ref{lem2.5}, we have
\begin{eqnarray*}
\rho(T^k_n) \leq \sqrt{|E(T^k_n)|}
                &=&\sqrt{nk+(n-k-1)^2} \\
                &=&\sqrt{k^2-(n-2)k+(n-1)^2}\\
                &<& n-1=\rho(T^0_n).
\end{eqnarray*}
For $k=n-2,$ the adjacent matrix of $T^{n-2}_n=K_{n-2, 1}\sqcup \widehat{K_{1, n-1}}$ has the equitable quotient matrix
\begin{eqnarray*}
B_{\Pi}=\begin{bmatrix}
   0&0&1&n-1\\0&0&1&0\\n-2&1&0&0\\n-2&0&0&0
\end{bmatrix}.
\end{eqnarray*}
According to Lemma \ref{lem2.6}, we obtain that $\rho(B_{\Pi})=\rho(T^{n-2}_n).$ By computation, the characteristic polynomial of $B_{\Pi}$ is
\begin{eqnarray*}
P(x)=x^4-(n-1)^2x^2+(n-1)(n-2).
\end{eqnarray*}
Then we have
\begin{eqnarray*}
    P(n-1)=(n-1)^4-(n-1)^4+(n-1)(n-2)=(n-1)(n-2)>0.
\end{eqnarray*}
It is straightforward to check that if $x>n-1,$ then $P(x)>P(n-1).$ Thus, it follows that $\rho(T^{n-2}_n)<\rho(T^0_n).$

(ii) Note that $Q^0_n$ is a proper subgraph of $B^1_n.$ Then we have $\rho(Q^0_n)\leq
\rho(B^1_n).$ For $2\leq k\leq n-2,$ by Lemma \ref{lem2.5}, we have
\begin{eqnarray*}
\rho(B^k_n) \leq \sqrt{|E(B^k_n)|}
                &=&\sqrt{nk+(n-k)^2} \\
                &=&\sqrt{k^2-nk+n^2}\\
                &\leq& \sqrt{n^2-2n+4}\\
                &<&\sqrt{n(n-1)}\\
                &=&\rho(Q^0_n)\leq \rho(B^1_n).
\end{eqnarray*}
The result follows.
\end{proof}
  \begin{lemma}\label{lem2.8.0}
Let $p\geq q\geq 0$ be integers. Let $G\cong G_1\cup pK_1$ and $G'$ be a subgraph of $G,$ where $G_1$ is connected. If $\rho(G')=\rho(G),$ then $G'\cong G_1\cup qK_1.$
\end{lemma}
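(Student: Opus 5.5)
The plan is to reduce everything to the components of $G'$ and apply the Perron--Frobenius strict monotonicity of the spectral radius under taking proper subgraphs of a connected graph. I read ``$G'\cong G_1\cup qK_1$'' as saying that $G'$ is $G_1$ together with some number $q\le p$ of isolated vertices. Since $V(G')\subseteq V(G)$, the vertices of $G'$ outside $V(G_1)$ are among the $p$ isolated vertices of $G$, so at most $p$ of them occur; this gives the bound on $q$ automatically.

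First I dispose of the degenerate case in which $G_1$ has no edge, i.e.\ $G_1\cong K_1$. Then $G$ is edgeless and so is every subgraph $G'$. Thus $G'$ consists of at most $p+1$ isolated vertices, which is $K_1\cup qK_1$ with $q\le p$ (for nonempty $G'$).

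Now assume $G_1$ has at least one edge, so $\rho(G)=\rho(G_1)>0$. Every component $C$ of $G'$ is connected, so it lies inside a single component of $G$. Hence either $C$ is an isolated vertex of $G$, with $\rho(C)=0$, or $C$ is a subgraph of $G_1$. Since the adjacency matrix of $G'$ is block diagonal over its components, $\rho(G')=\max_C\rho(C)$. The hypothesis $\rho(G')=\rho(G_1)>0$ therefore forces some component $C$ to be a subgraph of $G_1$ with $\rho(C)=\rho(G_1)$.

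The key step is to show $C=G_1$. Since $G_1$ is connected, $A(G_1)$ is nonnegative and irreducible. If $C$ were a proper subgraph of $G_1$, i.e.\ missing a vertex or an edge, then $A(C)$, padded with zeros to the size of $A(G_1)$, is entrywise $\le A(G_1)$ and not equal to it. Perron--Frobenius theory then gives $\rho(C)<\rho(G_1)$. Concretely, take the positive Perron vector $\mathbf{x}$ of $G_1$ and a nonnegative unit eigenvector $\mathbf{y}$ of $C$. We have $\rho(C)=\mathbf{y}^{T}A(C)\mathbf{y}\le \mathbf{y}^{T}A(G_1)\mathbf{y}\le\rho(G_1)$. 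If equality held throughout, $\mathbf{y}$ would be a Perron vector of $G_1$, hence positive. Then a missing edge $uv$ would contribute $2y_uy_v>0$, and a missing vertex would contradict the positivity of $\mathbf{y}$ on it. So $C=G_1$.

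Consequently $G_1$ is a component of $G'$, and all remaining vertices of $G'$ lie among the $p$ isolated vertices of $G$. Each of them is an isolated $K_1$ in $G'$, and there are $q\le p$ of them, so $G'\cong G_1\cup qK_1$. The only delicate point is the strict-inequality step (irreducibility of $A(G_1)$), which I handle with the Rayleigh quotient and positivity argument above. The rest is bookkeeping on components, plus the trivial case $G_1\cong K_1$.
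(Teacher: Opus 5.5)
Your proposal is correct and follows the same route as the paper: take a component of $G'$ attaining $\rho(G')$, note that it must lie inside $G_1$, and use the connectivity of $G_1$ (strict Perron--Frobenius monotonicity) to force it to be all of $G_1$, so that the remaining vertices are isolated. You also make explicit the details the paper leaves implicit: the strict-inequality argument via the positive Perron vector, ruling out isolated-vertex components when $\rho(G_1)>0$, and the trivial case $G_1\cong K_1$.
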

\begin{proof}
 Suppose that $G'_1$ is a component of $G'$ such that $\rho(G'_1)=\rho(G').$ Then $G'_1$ is a subgraph of $G.$ Furthermore, $G'_1$ is a subgraph of $G_1.$ Note that $\rho(G)=\rho(G_1).$ Then $\rho(G'_1)=\rho(G_1).$ Since $G_1$ is connected, $G'_1\cong G_1.$ Hence $G'\cong G_1\cup qK_1.$
\end{proof}
\begin{lemma}\label{lem2.9}
Let $r$ be a positive integer and $\mathcal{G}=\{G_1, G_2, \ldots, G_{n-1}\}$ be a family of graphs on vertex set $[n].$ Suppose that $P_{r+1}=u_1 u_2\cdots u_r u_{r+1}$ is a longest rainbow path of $\mathcal{G}$ such that $u_i u_{i+1}\in E(G_i)$ for every $i\in[r].$ If $d_{G_{r+1}}(u_{r+1})> 0,$ then $N_{G_{r+1}}(u_{r+1})\subset V(P_{r+1}).$
\end{lemma}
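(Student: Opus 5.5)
We argue by contradiction, using that $P_{r+1}$ is a longest rainbow path. Suppose $d_{G_{r+1}}(u_{r+1})>0$ and that some $w\in N_{G_{r+1}}(u_{r+1})$ lies outside $V(P_{r+1})$. We then append the edge $u_{r+1}w$ to the path, which yields
$$P'=u_1u_2\cdots u_ru_{r+1}w.$$
This is a path on $r+2$ vertices: $w\notin V(P_{r+1})$, so no vertex is repeated.

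Next we check that $P'$ is rainbow. By hypothesis, for each $i\in[r]$ the edge $u_iu_{i+1}$ is taken from $G_i$. The new edge $u_{r+1}w$ is taken from $G_{r+1}$, and the index $r+1$ differs from each of $1,\dots,r$. So the $r+1$ edges of $P'$ come from distinct members of $\mathcal{G}$.

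We must also make sure the color $G_{r+1}$ exists, i.e.\ $r+1\le n-1$. The hypothesis $d_{G_{r+1}}(u_{r+1})>0$ already presupposes that $G_{r+1}\in\mathcal{G}$, which means $r\le n-2$. Alternatively, if $r=n-1$, then $P_{r+1}$ is a Hamilton path, so $V(P_{r+1})=[n]$ and the inclusion of the neighbourhood is trivial.

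Therefore $P'$ is a rainbow path strictly longer than $P_{r+1}$, contradicting maximality. Hence $N_{G_{r+1}}(u_{r+1})\subseteq V(P_{r+1})$. The inclusion is in fact proper because $u_{r+1}\notin N_{G_{r+1}}(u_{r+1})$ (graphs are simple) while $u_{r+1}\in V(P_{r+1})$. The only point needing care is the index bookkeeping, namely that $G_{r+1}$ is a fresh color not used on $P_{r+1}$.
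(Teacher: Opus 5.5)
Your proof is correct and follows essentially the same argument as the paper: assume a neighbour $w$ of $u_{r+1}$ in $G_{r+1}$ lies off the path, append the edge $u_{r+1}w$ using the unused colour $G_{r+1}$, and obtain a longer rainbow path, contradicting maximality. Your extra checks (that $r+1\le n-1$ so $G_{r+1}$ exists, and that the inclusion is proper because $u_{r+1}$ is not its own neighbour) are valid refinements that the paper leaves implicit.
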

\begin{proof}
Suppose to the contrary that there exists a vertex $u_{r+2}\notin V(P_{r+1})$ such that $u_{r+1}u_{r+2}\in E(G_{r+1})$. Then $P_{r+2}=u_1 u_2 \cdots u_{r+1}u_{r+2}$ is a longer rainbow path of $\mathcal{G},$ which contradicts the choice of $P_{r+1}.$
\end{proof}

\section{Proof of Theorem \ref{thm1.4}}
Recall that $Q^0_n= K_{n,n-1}\cup K_1.$ Before presenting our proof, we show some crucial lemmas.
\begin{lemma}\label{lem3.1}
Let $G$ be a balanced bipartite graph on vertex set $[2n]$ and the bipartition $(X, Y),$ where $n\geq 2.$ Let $\{u, v\} \subseteq X$ or $\{u, v\} \subseteq Y$ with $u<v.$ If $\rho (G) \geq \rho(Q^0_n)$ and $S_{uv}(G)\cong Q^0_n,$ then $G\cong Q^0_n.$
\end{lemma}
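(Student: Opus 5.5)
Our plan is to compare edge counts and spectral radii, and then use the rigid structure of $Q^0_n=K_{n,n-1}\cup K_1$ to reconstruct $G$ from $S_{uv}(G)$. The shift preserves the number of edges, so $|E(G)|=|E(S_{uv}(G))|=n(n-1)$. By Lemma \ref{lem2.1}, $\rho(G)\le \rho(S_{uv}(G))=\rho(Q^0_n)$. Together with the hypothesis this gives $\rho(G)=\rho(Q^0_n)=\sqrt{n(n-1)}=\sqrt{|E(G)|}$, so equality holds in Lemma \ref{lem2.5}.

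\textbf{Characterising equality in Lemma \ref{lem2.5}.} We want to show that a bipartite graph with $\rho(G)=\sqrt{|E(G)|}$ is a complete bipartite graph plus isolated vertices. Let $G_1$ be a component with $\rho(G_1)=\rho(G)$. Then
\[
|E(G)|=\rho(G)^2=\rho(G_1)^2\le |E(G_1)|\le |E(G)|,
\]
so every edge of $G$ lies in $G_1$. Since $G_1$ is bipartite, its spectrum is symmetric, and $\sum_i\lambda_i^2=2|E(G_1)|$. Hence $2\rho(G_1)^2=2|E(G_1)|$ forces all other eigenvalues to be $0$. Thus $A(G_1)$ has rank $2$, and a connected bipartite graph with adjacency rank $2$ is complete bipartite, because the biadjacency matrix has rank $1$ and no zero rows. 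So $G\cong K_{a,b}\cup (2n-a-b)K_1$ with $ab=n(n-1)$.

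\textbf{Pinning down the parameters.} The complete bipartite component sits inside the bipartition $(X,Y)$ with $|X|=|Y|=n$, so $a,b\le n$. Combined with $ab=n(n-1)$, this forces $\{a,b\}=\{n,n-1\}$: for $n\ge 2$, $a\le n$ and $b=n(n-1)/a\le n$ give $a\ge n-1$, and then $a=n$ or $a=n-1$ by integrality. Therefore $G\cong K_{n,n-1}\cup K_1=Q^0_n$.

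The main obstacle is the equality case of Nosal's bound, since the paper states Lemma \ref{lem2.5} only as an inequality. This is why the rank argument above is spelled out. As an alternative, one could observe that $S_{uv}(G)$ differs from $G$ only in edges at $u$ and $v$. One would then argue directly that $G$ is obtained from $Q^0_n$ by swapping the roles of $u$ and $v$, which is again isomorphic to $Q^0_n$. However, the edge-count and spectral route seems cleaner.
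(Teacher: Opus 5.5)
Your proof is correct, but it takes a different route from the paper. The paper works with the structure of the shift. It lets $w$ be the isolated vertex of $S_{uv}(G)$ and notes that $G=S_{uv}(G)$ unless $w\in\{u,v\}$. In the remaining case, $N_G(u)$ and $N_G(v)$ are disjoint with union $Y$, and the paper uses the strict inequality of Lemma \ref{lem2.2} for connected graphs to force $d_G(u)=0$ or $d_G(v)=0$. It then finishes with Lemma \ref{lem2.8.0}.

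You use only that the shift preserves the number of edges, together with the equality case of Nosal's bound. You establish that equality case correctly via the trace of $A^2$ and a rank argument. One small point: to conclude that the biadjacency matrix is all-ones you need no zero columns as well as no zero rows, and connectivity of $G_1$ gives both. In fact you do not even need Lemma \ref{lem2.1}, since $\sqrt{|E(G)|}=\rho(Q^0_n)\le\rho(G)\le\sqrt{|E(G)|}$ already follows from Lemma \ref{lem2.5}.

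Your route buys three things:
\begin{itemize}
\item It is shorter.
\item It yields Corollary \ref{cor3.1} in the same stroke, because the full bi-shift also preserves edges.
\item It is uniform in $n\ge 2$. The paper's assertion that $G$ is connected when $d_G(u),d_G(v)>0$ actually needs $n\ge 3$: for $n=2$ one can get $G\cong 2K_2$, which is excluded only because $\rho(2K_2)=1<\sqrt2$.
\end{itemize}

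The paper's approach buys portability. It is the one that transfers to Lemma \ref{lem5.1}. There $B^1_n$ is not a complete bipartite graph plus isolated vertices, so $\rho(B^1_n)<\sqrt{|E(B^1_n)|}$ and the Nosal-equality argument is unavailable. Your method does work just as well for Lemma \ref{lem4.1}.

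The alternative you sketch at the end is essentially the paper's route, but as stated it skips the key step. The shift only tells you that $N_G(u)$ and $N_G(v)$ are disjoint with union $Y$, so $G$ need not be $Q^0_n$ with $u,v$ swapped. Some spectral input, such as Lemma \ref{lem2.2}, is needed to force one of these neighbourhoods to be empty.
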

\begin{proof}
Note that $S_{uv}(G)\cong Q^0_n.$ Let $w$ be the isolated vertex in $S_{uv}(G).$ Without loss of generality, we assume that $w\in X.$ Denote $X_1=X\setminus\{w\}.$ If $\{u, v\} \subseteq X_1,$ then by the definition of $(u, v)$-shift, we have $G=S_{uv}(G)\cong Q^0_n.$ If $u=w$ or $v=w,$ it follows from $u<v$ that $u\in X_1$ and $v=w\in X.$ We claim that $d_G(u)=0$ or $d_G(v)=0.$ In fact, if $d_G(u)>0$ and $d_G(v)>0,$ then $G$ is connected and $G \ncong S_{uv}(G).$ By Lemma \ref{lem2.2}, we have $\rho(S_{uv}(G)) >\rho(G).$ Note that $S_{uv}(G)\cong Q^0_n.$ Then
\begin{eqnarray*}
\rho(Q^0_n)=\rho(S_{uv}(G)) >\rho(G)\geq \rho(Q^0_n),
\end{eqnarray*}
a contradiction. Hence $d_G(u)=0$ or $d_G(v)=0,$ and so $G$ is a subgraph of $Q^0_n.$ Note that $\rho(G)\geq \rho(Q^0_n).$ By Lemma \ref{lem2.8.0}, $G\cong Q^0_n.$ If $\{u, v\} \subseteq Y,$ then we have $G=S_{uv}(G)\cong Q^0_n.$
\end{proof}
By Lemmas \ref{lem2.1} and \ref{lem3.1}, we immediately obtain the following result.
\begin{cor}\label{cor3.1}
Let $G$ be a balanced bipartite graph on vertex set $[2n]$ and the bipartition $(X, Y),$ where $n\geq 2.$ If $\rho(G) \geq \rho(Q^0_n)$ and $S(G)\cong Q^0_n,$ then $G\cong Q^0_n.$
\end{cor}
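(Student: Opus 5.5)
The plan is to unwind the shifting process one step at a time and run a backward induction along it, using Lemma \ref{lem2.1} and Lemma \ref{lem3.1} at each step. By definition, $S(G)$ is obtained from $G$ by a finite sequence of shifts. Each shift is of the form $S_{uv}$ with $u<v$ lying in the same part of the bipartition. We fix such a sequence:
\begin{eqnarray*}
G=H_0,\quad H_1=S_{u_1v_1}(H_0),\quad \ldots,\quad H_m=S_{u_mv_m}(H_{m-1})=S(G)\cong Q^0_n.
\end{eqnarray*}
Every $H_j$ is a balanced bipartite graph on $[2n]$ with the same bipartition $(X,Y)$. This holds because shifting inside a part keeps the graph bipartite with respect to $(X,Y)$.

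The first step is a spectral lower bound for every intermediate graph. Lemma \ref{lem2.1} gives $\rho(H_j)\geq\rho(H_{j-1})$ for each $j$. Chaining these inequalities yields
\begin{eqnarray*}
\rho(H_j)\geq\rho(G)\geq\rho(Q^0_n)\quad \text{for every } 0\leq j\leq m.
\end{eqnarray*}
So each $H_j$ satisfies the spectral hypothesis of Lemma \ref{lem3.1}.

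The second step is a downward induction on $j$, proving $H_j\cong Q^0_n$ for all $j$, starting from $H_m\cong Q^0_n$. Suppose $H_j\cong Q^0_n$ for some $j\geq 1$. Then $H_{j-1}$ is a balanced bipartite graph on $[2n]$ with $\rho(H_{j-1})\geq\rho(Q^0_n)$, and $S_{u_jv_j}(H_{j-1})=H_j\cong Q^0_n$, where $u_j<v_j$ lie in the same part. Lemma \ref{lem3.1} applied to $H_{j-1}$ then gives $H_{j-1}\cong Q^0_n$. At $j=0$ this is $G\cong Q^0_n$, as required.

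The only point needing care is that Lemma \ref{lem3.1} only assumes $S_{uv}(G)\cong Q^0_n$ up to isomorphism, not that $S_{uv}(G)$ is the specific bi-shifted labelled copy of $Q^0_n$. Checking this is what makes the induction valid. I do not expect any step to be a real obstacle, since everything reduces to the two cited lemmas.
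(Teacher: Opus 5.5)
Your argument is correct and is the one the paper intends when it says the corollary follows immediately from Lemmas \ref{lem2.1} and \ref{lem3.1}. Monotonicity of $\rho$ along the shifting sequence gives every intermediate graph the hypothesis $\rho\geq\rho(Q^0_n)$, and Lemma \ref{lem3.1}, which is stated only up to isomorphism, then pulls $Q^0_n$ back one shift at a time until it reaches $G$.
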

Next we prove a technical lemma which is very important to our main result.
\begin{lemma}\label{lem3.2}
    Let $\mathcal{G}=\{G_1, G_2, \ldots , G_{2n-1}\}$ be a family of balanced bipartite graphs on vertex set $[2n]$ and the bipartition $(X, Y),$ where $n\geq 2.$ If $G_i\cong Q^0_n$ for every $i\in [2n-1],$ and there exist $p, q\in [2n-1]$ such that $G_p \neq G_q,$ then $\mathcal{G}$ admits a rainbow Hamilton path.
\end{lemma}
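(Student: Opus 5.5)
Each $G_i$ is determined by a single vertex. Since $G_i\cong Q^0_n=K_{n,n-1}\cup K_1$ lives on the fixed bipartition $(X,Y)$ with $|X|=|Y|=n$, the non-isolated part is a $K_{n,n-1}$ inside $(X,Y)$. It must therefore be complete between $X\setminus\{w_i\}$ and $Y$, or between $X$ and $Y\setminus\{w_i\}$, where $w_i$ is the isolated vertex of $G_i$. So an edge $xy$ with $x\in X$, $y\in Y$ lies in $E(G_i)$ if and only if $w_i\notin\{x,y\}$. The hypothesis $G_p\neq G_q$ says exactly that $w_p\neq w_q$.

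For $v\in[2n]$ let $c_v=|\{i: w_i=v\}|$, so $\sum_v c_v=2n-1$ and at least two $c_v$ are positive. I first fix the vertex order of a Hamilton path $P$ alternating between $X$ and $Y$, with edges $e_1,\dots,e_{2n-1}$. Then I look for a bijection from colours to edges such that colour $i$ goes to an edge avoiding $w_i$; this is precisely a rainbow Hamilton path. I would obtain the bijection from Hall's theorem applied to the bipartite graph between colours and edges of $P$, where colour $i$ is joined to every edge of $P$ not incident with $w_i$.

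Next I check Hall's condition for a set $S$ of colours, in two cases.
\begin{itemize}
\item If all colours in $S$ share the same isolated vertex $v$, then $S$ sees $2n-1-d_P(v)$ edges. So I need $c_v\le 2n-1-d_P(v)$: this is $c_v\le 2n-2$ if $v$ is an endpoint of $P$, and $c_v\le 2n-3$ if $v$ is interior.
\item If $S$ involves at least two distinct isolated vertices, an edge is missed by all of $S$ only if it contains all of them. Hence at most one edge is missed, and none is missed unless exactly two vertices $a,b$ occur and $ab\in E(P)$. Since $|S|\le 2n-1$, the only dangerous case is $S$ the full set of colours with exactly two distinct isolated vertices $a,b$ adjacent on $P$.
\end{itemize}

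So the requirements on $P$ are the following. First, let $v^*$ maximise $c_v$; since another colour has a different isolated vertex, $c_{v^*}\le 2n-2$, and I put $v^*$ at an end of $P$. Every other vertex $u$ then has $c_u\le \lfloor (2n-1)/2\rfloor = n-1\le 2n-3$ for $n\ge2$, because at most one vertex can have $c_u\ge n$. Second, if exactly two vertices $a=v^*$ and $b$ occur as isolated vertices, they must be nonadjacent on $P$. If $a,b$ are in the same part this is automatic. Otherwise, after swapping the roles of $X$ and $Y$ (the path can start in either part), put $a$ first, at position $1$, and $b$ at position $4$, which exists since $P$ has $2n\ge4$ vertices and position $4$ lies in $b$'s part. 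Clearly such a Hamilton ordering of $[2n]$ exists.

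With $P$ chosen this way, Hall's condition holds for every $S$, which yields a perfect matching and hence the rainbow Hamilton path. The main point to verify carefully is the second case of the Hall check, namely that a multi-vertex colour set misses at most one edge and only in the configuration described. The small case $n=2$, where $2n-3=1$, should also be double-checked against the bound $c_u\le n-1$.
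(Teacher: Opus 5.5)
Your proof is correct, and it takes a genuinely different route from the paper.

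\textbf{Your approach.} The key ingredient you add is the structural observation that each $G_i$ is exactly the complete bipartite graph on $(X,Y)$ with the single vertex $w_i$ made isolated. You then fix the vertex order of the Hamilton path first and turn the colouring into a perfect matching between colours and path edges. The Hall check has only two tight configurations:
\begin{itemize}
\item the most frequent isolated vertex, which you place at an end using $c_{v^*}\le 2n-2$;
\item exactly two isolated vertices lying in opposite parts, which positions $1$ and $4$ resolve.
\end{itemize}
Both points you flagged hold, including $n=2$, where $n-1=2n-3$.

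\textbf{The paper's approach.} The paper never states this structural fact and works only with degree information: one isolated vertex, all other degrees $n-1$ or $n$. It lets $u_1$ be the isolated vertex of $G_{2n-1}$, so that $d_{G_1}(u_1)>0$. It takes a longest rainbow path $u_1u_2\cdots u_{r+1}$ with $u_iu_{i+1}\in E(G_i)$ and extends it by neighbourhood counting together with Lemma~\ref{lem2.9}. This forces several separate pieces:
\begin{itemize}
\item a hand check for $n=2$;
\item a split by the parity of $r$;
\item ad hoc endgames for $r\in\{2n-4,2n-3\}$, which rely on $u_1$ being isolated in $G_{2n-1}$.
\end{itemize}

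\textbf{Comparison.} Your route is shorter and uniform in $n$. It also shows transparently that the only obstruction is all $w_i$ coinciding. The paper's longest-path method is the more conventional tool in this area and is reused almost verbatim for Lemma~\ref{lem4.2}. Your matching argument also carries over to that nearly balanced case with only minor changes. There all $w_i$ lie in the larger part, which contains both ends of the path, so two distinct isolated vertices are never adjacent on the path.
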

\begin{proof}
Without loss of generality, we assume that $p=1$ and $q=2n-1.$ Since $G_i\cong Q^0_n,$ there exists exactly one vertex of degree $0$ and all other vertices have degree either $n-1$ or $n$ in $G_i$ for every $i\in [2n-1].$ Let $u_1$ is the isolated vertex of $G_{2n-1}.$ Assume that $u_1\in X.$ Since $G_1\neq G_{2n-1}$ and $G_1\cong Q^0_n,$ we conclude that $d_{G_1}(u_1)\geq n-1 >0.$ Next we divide our proof into two cases.

\begin{case}
$n=2.$
\end{case}

Suppose that $X=\{u_1, u_3\}$ and $Y=\{u_2, u_4\}.$ Note that $d_{G_1}(u_1)\geq n-1=1.$ Then we assume that $u_1u_2\in E(G_1).$ Since $d_{G_3}(u_1)=0,$ $d_{G_3}(u_3)=2$ and $u_2 u_3, u_3 u_4\in E(G_3).$ If $d_{G_2}(u_2)= 0,$ then $u_3 u_4\in E(G_2).$ Therefore, $P=u_1 u_2 u_3 u_4$ is a rainbow Hamilton path of $\mathcal{G},$ where $u_1 u_2\in E(G_1),$ $ u_2 u_3\in E(G_3)$ and $u_3 u_4\in E(G_2).$ If $d_{G_2}(u_3)=0,$ then $u_4 u_1\in E(G_2).$ Hence
$P=u_4 u_1 u_2 u_3$ is a rainbow Hamilton path of $\mathcal{G},$ where $u_4u_1\in E(G_2),$ $u_1u_2\in E(G_1)$  and $u_2 u_3\in E(G_3).$  If $d_{G_2}(u_3)\neq 0$ and $d_{G_2}(u_2)\ne 0,$ then $u_2 u_3\in E(G_2).$ Hence $P=u_1 u_2 u_3 u_4$ is a rainbow Hamilton path of $\mathcal{G},$ where $u_1 u_2\in E(G_1),$ $u_2 u_3\in E(G_2)$ and $u_3 u_4\in E(G_3).$

\begin{case}
$n\geq3.$
\end{case}

Since $d_{G_1}(u_1)\geq n-1\geq 2,$ there exists a vertex $u_2$ such that $u_1u_2\in E(G_1)$ and $d_{G_2}(u_2)\geq n-1\geq 2.$ Then we can find an edge $u_2 u_3\in E(G_2)$ and $u_3\neq u_1.$ Without loss of generality, let $P_{r+1}=u_1 u_2\cdots u_r u_{r+1}$ be a longest rainbow path of $\mathcal{G}$ such that $u_i u_{i+1}\in E(G_i)$ for every $i\in[r],$ where $r\geq 2.$
\begin{subcase}
$r$ is odd.
\end{subcase}
In this case, $3\leq r\leq 2n-1.$ Recall that $u_1\in X.$ It follows that $u_{2i-1}\in X$ and $u_{2i}\in Y$ for every $i\in[\frac{r+1}{2}].$
Since $u _r\in X$, we have $N_{G_r}(u_r)\subseteq Y.$ Note that $d_{G_r}(u_r)\geq n-1$ and $|V(P_{r})\cap Y| = \frac{r-1}{2}.$ If $3\leq r\leq 2n-5,$ then we have
\begin{eqnarray*}
    |N_{G_{r}}(u_{r})\setminus V(P_{r})|\geq(n-1)-\frac{r-1}{2}\geq 2.
\end{eqnarray*}
Hence there exists a new vertex $u'_{r+1}\notin V(P_r)$ such that $u_{r}u'_{r+1}\in E(G_{r})$ and $d_{G_{r+1}}(u'_{r+1})\geq n-1.$ Let $P'_{r+1}=u_1 u_2\cdots u_r u'_{r+1}.$ Then we have
\begin{eqnarray*}
|N_{G_{r+1}}(u'_{r+1})\setminus V(P'_{r+1})|\geq (n-1)-\frac{r+1}{2}\geq 1.
\end{eqnarray*}
 This implies that there exists a vertex $u_{r+2}\notin V(P'_{r+1})$ such that $u'_{r+1}u_{r+2}\in E(G_{r+1}).$ Now we find that $P=u_1 u_2\cdots u_r u'_{r+1}u_{r+2}$ is a longer rainbow path of $\mathcal{G},$ which contradicts the choice of $P_{r+1}.$ Hence $r=2n-3$ or $r=2n-1.$ If $r=2n-1,$ then $P_{2n}=u_1 u_2\cdots u_{2n-1}u_{2n}$ is a rainbow Hamilton path of $\mathcal{G}.$ Next we consider $r=2n-3.$ Let $u_{2n-1}, u_{2n}\in [2n]\setminus V(P_{2n-2}).$ Without loss of generality, we assume that $u_{2n-1}\in X$ and $u_{2n}\in Y.$
\begin{claim}\label{claim::1}
$d_{G_{2n-2}}(u_{2n-2})=0$ or $d_{G_{2n-2}}(u_{2n-1})=0.$
\end{claim}
\begin{proof}
If $d_{G_{2n-2}}(u_{2n-2})=0,$ then the result follows. Next we consider $d_{G_{2n-2}}(u_{2n-2})>0.$ By Lemma \ref{lem2.9}, $N_{G_{2n-2}}(u_{2n-2})\subset V(P_{2n-2}).$ If $d_{G_{2n-2}}(u_{2n-2})=n,$ then $u_{2n-1}\in N_{G_{2n-2}}(u_{2n-2})$ and $u_{2n-1}\in V(P_{2n-2}),$ a contradiction. Hence $d_{G_{2n-2}}(u_{2n-2})=n-1.$  Since $u_{2n-1}\notin V(P_{2n-2}),$ we have $u_{2n-1}\notin N_{G_{2n-2}}(u_{2n-2}).$ By $G_{2n-2}\cong Q^0_n,$ $d_{G_{2n-2}}(u_{2n-1})=0.$
\end{proof}
By Claim \ref{claim::1}, we have $u_1 u_{2n}\in E(G_{2n-2}).$ Recall that $d_{G_{2n-1}}(u_1)=0$ and $G_{2n-1}\cong Q^0_n$. Then $d_{G_{2n-1}}(u_{2n-2})=n-1$ and $u_{2n-2} u_{2n-1}\in E(G_{2n-1}).$ Hence $P= u_{2n}u_1u_2\cdots u_{2n-3}u_{2n-2} u_{2n-1}$ is a rainbow Hamilton path of $\mathcal{G}.$

\begin{subcase}
$r$ is even.
\end{subcase}

In this case, $2\leq r\leq 2n-2.$ Recall that $u_1\in X.$ It follows that $u_{2i+1}\in X$ and $u_{2i}\in Y$ for every $i\in[\frac{r}{2}].$ If $2\leq r\leq 2n-6,$ then we have
\begin{eqnarray*}
    |N_{G_{r}}(u_{r})\setminus V(P_{r})|\geq (n-1)-\frac{r}{2}\geq 2.
\end{eqnarray*}
Hence there exists a vertex $u''_{r+1}\notin V(P_{r})$ such that $u_{r}u''_{r+1}\in E(G_{r})$ and $d_{G_{r+1}}(u''_{r+1})\geq n-1.$ Let $P''_{r+1}=u_1 u_2\cdots u_r u''_{r+1}.$ Then we have
\begin{eqnarray*}
|N_{G_{r+1}}(u''_{r+1})\setminus V(P''_{r+1})|\geq (n-1)-\frac{r}{2}\geq 2.
\end{eqnarray*}
This implies that there exists a vertex $u_{r+2}\notin V(P''_{r+1})$ such that $u''_{r+1}u_{r+2}\in E(G_{r+1}).$ Therefore, $P=u_1 u_2\cdots u_r u''_{r+1}u_{r+2}$ is a longer rainbow path of $\mathcal{G},$ which contradicts the choice of $P_{r+1}.$ Then $r\geq 2n-4.$ Suppose that $r=2n-2.$ Recall that $u_1, u_{2n-1}\in X$ and $d_{G_{2n-1}}(u_1)=0.$ Then $d_{G_{2n-1}}(u_{2n-1})=n.$ Therefore,
\begin{eqnarray*}
|N_{G_{2n-1}}(u_{2n-1})\cap V(P_{2n-1})|\leq n-1<n.
\end{eqnarray*}
This implies that $N_{G_{2n-1}}(u_{2n-1}) \nsubseteq V(P_{2n-2}),$ contradicting Lemma \ref{lem2.9}. Next we consider $r=2n-4.$ Let $u_{2n-2},$ $u_{2n-1}, u_{2n}\in [2n]\setminus V(P_{2n-3})$ such that $u_{2n-1}\in X$ and $u_{2n-2}, u_{2n}\in Y.$ We claim that $d_{G_{2n-3}}(u_{2n-3})=0.$ In fact, if $d_{G_{2n-3}}(u_{2n-3})\geq n-1,$ then we have
\begin{eqnarray*}
    |N_{G_{2n-3}}(u_{2n-3})\cap V(P_{2n-3})|\leq n-2<n-1.
\end{eqnarray*}
It follows that $N_{G_{2n-3}}(u_{2n-3})\nsubseteq V(P_{2n-3}),$ contrary to Lemma \ref{lem2.9}. Note that $G_{2n-3}\cong Q^0_n.$ Then there exists an edge $u_{2n-2}u_{2n-1}$ $\in E(G_{2n-3}).$ Recall that $u_1$ is the unique isolated vertex of $G_{2n-1}.$ By $u_{2n-3}\in X$ and $u_{2n-3}\neq u_1,$ we have $d_{G_{2n-1}}(u_{2n-3})=n.$ Hence there exist two edges $u_{2n-3}u_{2n-2},$ $u_{2n-3}u_{2n}\in E(G_{2n-1}).$ Note that $d_{G_{2n-2}}(u_{2n-2})\geq n-1$ or $d_{G_{2n-2}}(u_{2n})\geq n-1.$ Without loss of generality, assume that $d_{G_{2n-2}}(u_{2n})\geq n-1.$ Then we have $u_{2n-1}u_{2n}$ $\in E(G_{2n-2})$ or $u_{1}u_{2n}$ $\in E(G_{2n-2}).$ If $u_{2n-1}u_{2n}$ $\in E(G_{2n-2})$, then $P'=u_1u_2\cdots u_{2n-3}u_{2n-2} u_{2n-1}u_{2n}$ is a rainbow Hamilton path of $\mathcal{G}.$ If $u_{1}u_{2n}$ $\in E(G_{2n-2}),$ then $P''=u_{2n}u_1u_2\cdots u_{2n-3}u_{2n-2}u_{2n-1}$ is a rainbow Hamilton path of $\mathcal{G}.$
\setcounter{case}{0}
\end{proof}

Now we are in a position to present the proof of Theorem \ref{thm1.4}.
\vspace{1mm}

\medskip
\noindent  \textbf{Proof of Theorem \ref{thm1.4}.}
For $n=2,$ by Lemma \ref{lem2.5}, we have $\rho(Q^0_2)\leq \rho(G_i)\leq \sqrt{|E(G_i)|},$ which implies that $2=\rho^2(Q^0_2)\leq |E(G_i)|\leq 4.$ If $|E(G_i)|=2$ for each $i\in [3],$ then $G_i \cong Q^0_2$ or $K_2\cup K_2.$ Note that $\rho(G_i)\geq\rho(Q^0_2)>\rho(K_2\cup K_2).$ Then $G_i \cong Q^0_2.$ Suppose to the contrary that $\mathcal{G}$ has no rainbow Hamilton path. By Lemma \ref{lem3.2}, we have $G_1= G_2=G_3\cong Q^0_2.$ If there exists some $i\in[3]$ such that $|E(G_i)|\geq 3,$ then without loss of generality, we assume that $|E(G_1)|\geq 3,$ and hence $Q^0_2$ is a proper subgraph of $G_1.$ It suffices to prove that $\mathcal{G}$ has a rainbow Hamilton path. Note that $Q^0_2$ is a subgraph of $G_i$ for each $i\in[3].$ Let $G'_i\cong Q^0_2$ be a subgraph of $G_i,$ where $i=2,3.$ Since $|E(G_1)|\geq 3,$ there exist two subgraphs $G'_1$ and $G''_1$ of $G_1$ such that $G'_1\cong Q^0_2,$ $G''_1\cong Q^0_2$ and $G'_1\neq G''_1.$ Then $G'_1\neq G'_2$ or $G''_1\neq G'_2.$ By Lemma \ref{lem3.2}, $\{G'_1, G'_2, G'_3\}$ or $\{G''_1, G'_2, G'_3\}$ has a rainbow Hamilton path, which implies that so does $\mathcal{G}.$

Next we consider the case $n\geq 3.$ Assume that $\mathcal{G}=\{G_1, G_2, \ldots,$ $ G_{2n-1}\}$ has no rainbow Hamilton path. For convenience, we denote $S_i=S(G_i)$ for each $i\in [2n-1].$ According to Lemma \ref{lem2.3}, the bi-shifted family $\{S_1,$ $S_2,$ $\ldots,$ $S_{2n-1}\}$ has no rainbow Hamilton path. By Lemma \ref{lem2.1} and $\rho(G_i)\geq \rho(Q^0_n),$ we have
\begin{eqnarray}\label{eq::1}
    \rho(S_i)\geq \rho(G_i)\geq \rho(Q^0_n).
\end{eqnarray}
 We first prove that $S_i\cong Q^0_n$ for every $i\in[2n-1].$ Without loss of generality, we assume that $X=[n]$ and $Y=[2n]\setminus[n].$ Define $e_j=\{j, 2n-j+1\},$ where $1\leq j\leq n.$

\begin{claim}\label{claim::2}
$\{e_2, e_3, \ldots, e_{n-1}\}\subseteq E(S_i)$ for each $i\in [2n-1].$
\end{claim}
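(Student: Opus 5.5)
The plan is to argue by contradiction, using the bi-shifted structure of $S_i$ to trap all of its edges inside a copy of some $Q^k_n$ with $1\le k\le n-2$, and then to invoke Lemma \ref{lem2.8}(i) against inequality (\ref{eq::1}).

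Fix $i\in[2n-1]$ and suppose that $e_j=\{j,2n-j+1\}\notin E(S_i)$ for some $2\le j\le n-1$. Since $S_i$ is bi-shifted, Observation \ref{obs::1} gives the following. For every $x\in X$ with $x\ge j$ and every $y\in Y$ with $y\ge 2n-j+1$, we have $\{x,y\}\notin E(S_i)$; otherwise the observation would force $\{j,2n-j+1\}\in E(S_i)$. Hence every edge $xy$ of $S_i$ (with $x\in X$, $y\in Y$) satisfies at least one of the following:
\begin{itemize}
\item[(a)] $x\in X_1:=\{1,\ldots,j-1\}$ and $y\in Y$ is arbitrary;
\item[(b)] $x\in X_2:=\{j,\ldots,n\}$ and $y\in Y_1:=\{n+1,\ldots,2n-j\}$.
\end{itemize}

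Set $k=j-1$, so that $1\le k\le n-2$. Then $|X_1|=k$, $|Y_1|=n-j=n-k-1$, $|X_2|=n-k$, and $Y_2:=Y\setminus Y_1$ has $|Y_2|=k+1$. The bipartite graph $H$ on $[2n]$ whose edge set consists of all pairs of types (a) and (b) is exactly $K_{k,n-k-1}\sqcup\widehat{K_{n-k,k+1}}$. Indeed, its edges are all of $X_1\times Y_1$, all of $X_1\times Y_2$, and all of $X_2\times Y_1$, with $X_2\times Y_2$ empty. Thus $H\cong Q^{k}_n$, and $S_i$ is a subgraph of $H$.

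By monotonicity of the spectral radius under taking subgraphs (Perron--Frobenius), we get
\[
\rho(S_i)\le \rho(Q^{k}_n).
\]
Lemma \ref{lem2.8}(i) gives $\rho(Q^{k}_n)<\rho(Q^0_n)$ for $1\le k\le n-2$. Combining these,
\[
\rho(S_i)\le \rho(Q^{k}_n)<\rho(Q^0_n),
\]
which contradicts (\ref{eq::1}). Hence $e_j\in E(S_i)$ for every $2\le j\le n-1$ and every $i\in[2n-1]$.

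The only delicate step is the identification of the forbidden region and the check that the complementary edge set is exactly a labelled copy of $Q^{j-1}_n$. This amounts to matching the part sizes $k$, $n-k-1$, $n-k$, $k+1$ with the definition of $Q^k_n$; the spectral comparison itself is supplied by Lemma \ref{lem2.8}(i). The restriction $2\le j\le n-1$ is exactly what keeps $k=j-1$ in the range $[1,n-2]$ where that lemma applies. This is why $e_1$ and $e_n$ are excluded from the claim: for $j=1$ or $j=n$ the bound would only give $Q^0_n$ or a degenerate case.
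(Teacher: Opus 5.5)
Your proposal is correct and follows the paper's proof essentially verbatim. Assuming some $e_j\notin E(S_i)$ with $2\le j\le n-1$, Observation~\ref{obs::1} confines every edge of the bi-shifted graph $S_i$ to $X_1\times Y\cup X_2\times Y_1$, a labelled copy of $Q^{j-1}_n$. Lemma~\ref{lem2.8}(i) then contradicts (\ref{eq::1}), and your explicit check of the part sizes $k,\,n-k-1,\,n-k,\,k+1$ makes precise a step the paper leaves implicit.
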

\begin{proof}
Suppose that Claim \ref{claim::2} does not hold. Then there exist some $2\leq p \leq n-1$ and some $q\in [2n-1]$ such that $e_p=\{p, 2n-p+1\} \notin E(S_q).$ Note that $S_q$ is bi-shifted. By Observation \ref{obs::1}, for every edge $\{i, j\}\in E(S_q),$ we have $1\leq i<p$ or $n+1\leq j <2n-p+1.$ Hence $S_q$ is a subgraph of $Q^{p-1}_n,$ which implies that $ \rho(S_q)\leq \rho(Q^{p-1}_n).$ According to Lemma \ref{lem2.8}, we have
\begin{eqnarray*}
    \rho(S_q)\leq \rho(Q^{p-1}_n)<\rho(Q^0_n),
\end{eqnarray*}
which contradicts (\ref{eq::1}).
\end{proof}

If $d_{S_i}(n)=0$ for each $i\in [2n-1],$ then $S_i$ is a subgraph of $Q^0_n.$ By (\ref{eq::1}) and Lemma \ref{lem2.8.0}, $S_i\cong Q^0_n$ for $i\in[2n-1].$ Similarly, if $d_{S_i}(2n)=0$ for each $i\in [2n-1],$ then $S_i\cong Q^0_n.$ Next we consider the case that there exist $p, q\in [2n-1]$ such that $d_{S_p}(n)>0$ and $d_{S_q}(2n)>0.$ Recall that $S_p$ and $S_q$ are bi-shifted. By Observation \ref{obs::1}, we can deduce that $e_n=\{n, n+1\}\in E(S_p)$ and $e_1=\{1, 2n\}\in E(S_q).$ Next we divide our proof into two cases.
\begin{case}\label{c1}
$p\neq q.$
\end{case}
Without loss of generality, we assume that $p=n$ and $q=1.$ Then $e_1\in E(S_1)$ and $e_n\in E(S_n).$ By Claim \ref{claim::2}, we can choose $e_j\in E(S_j)$ for $2\leq j\leq n-1.$ Note that $e_2=\{2, 2n-1\}\in E(S_{n+1})$ and $e_j=\{j, 2n-j+1\}\in E(S_{n+j})$ for $2\leq j\leq n-1.$ By Observation \ref{obs::1}, $\{1, 2n-1\}\in E(S_{n+1})$ and $\{j, 2n-j\}\in E(S_{n+j})$ for $2\leq j\leq n-1.$
Hence $\{S_1, S_2, \ldots, S_{2n-1}\}$ has a rainbow Hamilton path, a contradiction.
\begin{case}
$p=q.$
\end{case}
Without loss of generality, we assume that $p=q=1.$ We claim that $d_{S_i}(n)=d_{S_i}(2n)=0$ for every $2\leq i\leq 2n-1.$ In fact, if there exists some $2\leq j\leq 2n-1$ such that $d_{S_j}(n)>0$ or $d_{S_j}(2n)>0,$ then we have $d_{S_1}(n)>0$ and $d_{S_j}(2n)>0,$ or $d_{S_1}(2n)>0$ and $d_{S_j}(n)>0.$
By Case \ref{c1}, $\{S_1, S_2, \ldots, S_{2n-1}\}$ has a rainbow Hamilton path, a contradiction. Hence $\rho(S_i)\leq \rho(K_{n-1,n-1})<\rho(Q^0_n)$ for every $2\leq i\leq 2n-1,$ which contradicts (\ref{eq::1}).

Therefore, $S_i\cong Q^0_n$ for each $i\in [2n-1].$ Combining (\ref{eq::1}), we have $\rho(S_i)=\rho(G_i)=\rho(Q^0_n).$ It follows from Corollary \ref{cor3.1} that $G_i\cong Q^0_n.$ Suppose that there exist $p,q\in [2n-1]$ such that $G_p\neq G_q.$ By Lemma \ref{lem3.2}, $\mathcal{G}$ has a rainbow Hamilton path, a contradiction. Hence $G_1=G_2=\cdots=G_{2n-1} \cong Q^0_n.$
\setcounter{case}{0}
\hspace*{\fill}$\Box$

\section{Proof of theorem \ref{thm1.5}}
Recall that $T^0_n= K_{n-1,n-1}\cup K_1.$ In this section, we shall give the proof of Theorem \ref{thm1.5}.
\begin{lemma}\label{lem4.1}
Let $G$ be a nearly balanced bipartite graph on vertex set $[2n-1]$ and the bipartition $(X, Y).$ Let $\{u, v\} \subseteq X$ or $\{u, v\} \subseteq Y$ with $u<v.$ If $\rho (G) \geq \rho(T^0_n)$ and $S_{uv}(G)\cong T^0_n,$ then $G\cong T^0_n.$
\end{lemma}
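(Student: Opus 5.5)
The argument mirrors the proof of Lemma \ref{lem3.1}, adapted to the structure of $T^0_n=K_{n-1,n-1}\cup K_1$. Since $S_{uv}(G)\cong T^0_n$ and $G$ is nearly balanced on $2n-1$ vertices with $|X|-|Y|=1$, the isolated vertex $w$ of $S_{uv}(G)$ must lie in the larger part. Say $|X|=n$, $|Y|=n-1$, and $w\in X$. Put $X_1=X\setminus\{w\}$, so that $S_{uv}(G)$ is complete bipartite between $X_1$ and $Y$. I would split into three cases according to the location of $\{u,v\}$.

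\emph{Case $\{u,v\}\subseteq Y$.} In $S_{uv}(G)$ the vertices $u$ and $v$ both have neighbourhood exactly $X_1$. A shift that genuinely moves an edge creates a vertex-pair whose neighbourhoods differ. More precisely, if some edge $vz$ were moved to $uz$, then $uz\notin E(G)$ beforehand and $vz\notin E(S_{uv}(G))$ afterwards. Hence $v$ would lose the neighbour $z$, contradicting $N_{S_{uv}(G)}(v)=X_1\ni z$; the only alternative is $z=w$, but then $uw\in E(S_{uv}(G))$, which is impossible since $w$ is isolated. Therefore no edge moves and $G=S_{uv}(G)\cong T^0_n$.

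\emph{Case $\{u,v\}\subseteq X_1$.} The same reasoning applies: both $u$ and $v$ have neighbourhood $Y$ in the shifted graph, so no edge was moved and $G=S_{uv}(G)$.

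\emph{Case $w\in\{u,v\}$.} Since $u<v$ and $w$ is isolated in $S_{uv}(G)$, we cannot have $u=w$. Indeed, edges are only moved onto $u$, so $d_{S_{uv}(G)}(u)\ge d_G(u)$, and $u$ isolated afterwards would force $v$ isolated as well; then the shift is trivial and $G\cong T^0_n$ directly. So the main situation is $v=w$ with $u\in X_1$. I would claim that $d_G(u)=0$ or $d_G(v)=0$. If both degrees are positive, then $G$ is connected: every vertex of $Y$ is adjacent in $G$ to all of $X_1\setminus\{u\}$ together with at least one of $u,v$ (because the union of $N_G(u)$ and $N_G(v)$ is $Y$), and $u$ and $v$ each have a neighbour, so connectivity holds at least for $n\ge 3$. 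The case $n=2$ needs a separate small check. Moreover $G\ncong S_{uv}(G)$, since $G$ has no isolated vertex. Lemma \ref{lem2.2} then gives $\rho(T^0_n)=\rho(S_{uv}(G))>\rho(G)\ge\rho(T^0_n)$, a contradiction. Hence one of $u,v$ is isolated in $G$, so $G$ is a subgraph of a copy of $K_{n-1,n-1}\cup K_1$. Lemma \ref{lem2.8.0}, applied with $\rho(G)\ge\rho(T^0_n)$, then yields $G\cong T^0_n$.

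The main obstacle is verifying connectivity of $G$ in the last case, especially for small $n$. For $n=2$ the graph is $K_{1,1}\cup K_1$ on three vertices, and the case can be handled by direct inspection: a graph with both $u$ and $v$ of positive degree would be $K_{1,2}=P_3$, which is connected and has spectral radius $\sqrt2>1$. That contradicts $S_{uv}(G)\cong T^0_2$ only through the edge count, so I would instead argue directly that the shift preserves the number of edges, giving $|E(G)|=1$ and forcing $G\cong T^0_2$.
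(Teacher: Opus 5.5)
Your proposal is correct and follows the paper's proof essentially step for step. It uses the same case split according to whether $\{u,v\}\subseteq X_1$, $\{u,v\}\subseteq Y$, or $v=w$, applies Lemma~\ref{lem2.2} to rule out both $u$ and $v$ having positive degree in $G$, and closes with Lemma~\ref{lem2.8.0}. You also justify points the paper only asserts: why the shift is trivial when $\{u,v\}\subseteq X_1$ or $\{u,v\}\subseteq Y$, why $u\neq w$, and why $G$ is connected, including the edge-count argument for $n=2$. One small wording point: $u=w$ would leave both $u$ and $v$ isolated in $S_{uv}(G)$, so that case is impossible rather than merely trivial.
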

\begin{proof}
Note that $S_{uv}(G)\cong T^0_n.$ Let $w$ be the vertex of degree $0$ in $S_{uv}(G).$ Without loss of generality, suppose that $w\in X.$ Denote $X_1=X\setminus\{w\}.$  If $\{u, v\} \subseteq X_1,$ then we have $G=S_{uv}(G)\cong T^0_n.$ If $u=w$ or $v=w,$ then we obtain that $u\in X_1$ and $v=w\in X$ for $u<v.$ We claim that $d_G(u)=0$ or $d_G(v)=0.$ Otherwise, we get $d_G(u)>0$ and $d_G(v)>0.$ Then $G$ is connected and $G\ncong S_{uv}(G).$ By Lemma \ref{lem2.2}, $\rho(S_{uv}(G)) >\rho(G).$ Recall that $S_{uv}(G)\cong T^0_n.$ Then we have
\begin{eqnarray*}
    \rho(T^0_n)=\rho(S_{uv}(G)) >\rho(G)\geq \rho(T^0_n),
\end{eqnarray*}
a contradiction. Hence $d_G(u)=0$ or $d_G(v)=0,$ which implies that $G$ is a subgraph of $T^0_n.$ Note that $\rho(G)\geq \rho(T^0_n).$ By Lemma \ref{lem2.8.0}, $G\cong T^0_n.$ If $\{u, v\} \subseteq Y,$ then we deduce that $G=S_{uv}(G)\cong T^0_n.$
\end{proof}
Based on Lemmas \ref{lem2.1} and \ref{lem4.1}, we can directly obtain the following result.
\begin{cor}\label{cor4.1}
Let $G$ be a nearly balanced bipartite graph on vertex set $[2n-1]$ and the bipartition $(X, Y).$ If $\rho (G) \geq \rho(T^0_n)$ and $S(G)\cong T^0_n,$ then $G\cong T^0_n.$
\end{cor}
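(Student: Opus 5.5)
The plan is to argue exactly as for Corollary \ref{cor3.1}, with Lemma \ref{lem4.1} in place of Lemma \ref{lem3.1}. The bi-shifted graph $S(G)$ is reached from $G$ by a finite sequence of shifts inside one part,
\[
G=H_0,\quad H_1=S_{u_1v_1}(H_0),\quad \ldots,\quad H_m=S_{u_mv_m}(H_{m-1})=S(G),
\]
where each pair satisfies $u_j<v_j$ and $\{u_j,v_j\}\subseteq X$ or $\{u_j,v_j\}\subseteq Y$. Every $H_j$ is again a nearly balanced bipartite graph on $[2n-1]$ with the same bipartition $(X,Y)$, since a shift inside one part keeps the graph bipartite with respect to $(X,Y)$. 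I would then induct backwards along this chain to show that every $H_j$ is isomorphic to $T^0_n$.

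The first ingredient is the spectral monotonicity along the chain. By Lemma \ref{lem2.1},
\[
\rho(H_j)\geq \rho(H_{j-1})\geq\cdots\geq \rho(G)\geq \rho(T^0_n)
\]
for every $j$. So each intermediate graph $H_{j-1}$ satisfies the hypothesis $\rho(H_{j-1})\geq \rho(T^0_n)$ of Lemma \ref{lem4.1}.

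For the backward induction, the base case is $H_m=S(G)\cong T^0_n$, which is given. For the step, suppose $H_j=S_{u_jv_j}(H_{j-1})\cong T^0_n$. Then Lemma \ref{lem4.1}, applied to $H_{j-1}$ with the pair $u_j<v_j$ in one part, gives $H_{j-1}\cong T^0_n$. Descending to $j=0$ yields $G\cong T^0_n$.

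The only point needing care is that the shifting process is a finite chain of single in-part shifts, each preserving the bipartition; the paper already asserts this when it defines bi-shifted graphs. Once that is accepted, no step is a genuine obstacle, because Lemma \ref{lem4.1} carries all the spectral content.
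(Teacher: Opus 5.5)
Your proposal is correct and is exactly the argument the paper intends when it says the corollary follows directly from Lemmas \ref{lem2.1} and \ref{lem4.1}. Lemma \ref{lem2.1} gives monotonicity of $\rho$ along the finite chain of in-part shifts, which supplies $\rho(H_{j-1})\geq\rho(T^0_n)$ at each step, and Lemma \ref{lem4.1} is then applied backwards from $S(G)$ to $G$; you have only made explicit the chain and the backward induction that the paper leaves implicit.
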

We next establish a technical lemma that plays a crucial role in our main result.
\begin{lemma}\label{lem4.2}
Let $\mathcal{G}=\{G_1, G_2, \ldots , G_{2n-2}\}$ be a family of nearly balanced bipartite graphs on vertex set $[2n-1]$ and the bipartition $(X, Y).$ If $G_i\cong T^0_n$ for every $i\in [2n-2],$ and there exist $p, q\in [2n-2]$ such that $G_p \neq G_q,$ then $\mathcal{G}$ admits a rainbow Hamilton path.
\end{lemma}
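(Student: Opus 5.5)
Every $G_i\cong T^0_n=K_{n-1,n-1}\cup K_1$ sits on the bipartition $(X,Y)$ with $|X|=n$ and $|Y|=n-1$. The plan is to turn the problem into a matching problem and check Hall's condition. First note that the isolated vertex of $G_i$ must lie in $X$: the component $K_{n-1,n-1}$ needs $n-1$ vertices from each side, and $|Y|=n-1$ only. Write $x_i\in X$ for the isolated vertex of $G_i$. Then $G_i$ is exactly the complete bipartite graph between $X\setminus\{x_i\}$ and $Y$. The hypothesis $G_p\neq G_q$ says precisely that the vertices $x_i$ are not all equal. For $v\in X$ let $c_v=|\{i: x_i=v\}|$. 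Then $\sum_{v\in X}c_v=2n-2$, and $c_v\le 2n-3$ for every $v$.

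A Hamilton path on $2n-1$ vertices in this bipartite setting must alternate and start and end in $X$. So it has the form $P=a_1b_1a_2b_2\cdots b_{n-1}a_n$, where $X=\{a_1,\dots,a_n\}$ and $Y=\{b_1,\dots,b_{n-1}\}$. Each of its $2n-2$ edges has exactly one endpoint in $X$. An edge $e$ with $X$-endpoint $a$ lies in $G_i$ if and only if $a\neq x_i$, since every vertex of $Y$ is adjacent in $G_i$ to every vertex of $X\setminus\{x_i\}$. Hence $P$ is rainbow as soon as there is a bijection from the colours $[2n-2]$ to the edges of $P$ that never sends colour $i$ to an edge at $x_i$. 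The endpoints $a_1,a_n$ carry one edge of $P$ each, and the internal vertices $a_2,\dots,a_{n-1}$ carry two each.

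I would choose the ordering so that $a_1$ and $a_n$ are the two vertices of $X$ with the largest values of $c_v$, and order the rest arbitrarily. Then I apply Hall's theorem to the bipartite graph of colours versus edges of $P$. Take a set $C$ of colours.
\begin{itemize}
\item If the $x_i$ with $i\in C$ take at least two distinct values, then every edge of $P$ is admissible for some colour of $C$. The neighbourhood of $C$ is then all $2n-2$ edges, so Hall's condition holds.
\item If all $x_i$ with $i\in C$ equal a single vertex $v$, then $|C|\le c_v$ and the neighbourhood of $C$ has $2n-2-\deg_P(v)$ edges.
\end{itemize}
So it suffices to show $c_v\le 2n-2-\deg_P(v)$ for every $v\in X$.

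For $v\in\{a_1,a_n\}$ this is $c_v\le 2n-3$, which holds because the $x_i$ are not all equal. For an internal vertex $w$ (only present when $n\ge 3$) we need $c_w\le 2n-4$. Since $c_{a_1},c_{a_n}\ge c_w$ and all three are summands of $2n-2$, we get $3c_w\le 2n-2$. This gives $c_w\le (2n-2)/3\le 2n-4$ for $n\ge 3$. When $n=2$ there are no internal vertices, and the two colours have distinct isolated vertices, which we place at the two ends of the path.

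Hall's theorem then yields the required bijection, and hence a rainbow Hamilton path. The only real point of care is the end-vertex case, where the hypothesis $G_p\neq G_q$ is used exactly once to get $c_v\le 2n-3$.
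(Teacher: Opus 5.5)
Your proof is correct, and it takes a genuinely different route from the paper's.

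\textbf{The paper's route.} The paper fixes $u_1$ as the isolated vertex of $G_{2n-2}$ and grows a longest rainbow path $u_1u_2\cdots u_{r+1}$ with $u_iu_{i+1}\in E(G_i)$. It then runs a parity case analysis on $r$, using Lemma~\ref{lem2.9} and the fact that every non-isolated vertex has degree $n-1$. Short paths can always be extended by two more steps, and the few lengths near $2n-2$ are either ruled out by Lemma~\ref{lem2.9} or completed by hand.

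\textbf{Your route.} You exploit the full rigidity of $T^0_n$ here: each $G_i$ is exactly the complete bipartite graph between $X\setminus\{x_i\}$ and $Y$. So the vertex order of the Hamilton path can be fixed in advance, and what remains is a perfect matching between colours and path edges in which colour $i$ is forbidden only on the edges at $x_i$. Hall's condition then reduces to $c_v\le 2n-2-\deg_P(v)$. You secure this by placing the two most frequent isolated vertices at the ends.

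\textbf{Comparison.} Your argument is shorter and handles $n=2$ and $n\ge 3$ uniformly. It gives an explicit recipe for the path and pinpoints the single place where $G_p\neq G_q$ is used, namely the end-vertex bound $c_v\le 2n-3$. The price is importing Hall's theorem and relying on all isolated vertices lying on the same side. The paper's longest-path template needs no matching theory and is the same one used for Lemma~\ref{lem3.2}. There the isolated vertices of $Q^0_n$ may lie on either side, so two colours with different isolated vertices can both be blocked on the same path edge. Your step that two distinct isolated vertices give the full neighbourhood would then fail, and a Hall-type count would need more care.
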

\begin{proof}
Without loss of generality, we assume that $p=1,$ $q=2n-2,$ and $|X|=n.$ Since $G_i\cong T^0_n$ and $|X|=n,$ there exists the unique isolated vertex in $X$ and all other vertices have degree $n-1$ in $G_i$ for $i\in [2n-2].$ Let $u_1$ be the isolated vertex of $G_{2n-2}.$ Clearly, $u_1\in X.$ Since $G_1\neq G_{2n-2}$ and $G_1\cong T^0_n,$ we can obtain that $d_{G_1}(u_1)=n-1>0.$ Next we divide the following proof into two cases.
\begin{case}
    $n=2.$
\end{case}
Suppose that $X=\{u_1, u_3\}$ and $Y=\{u_2\}.$ Note that $d_{G_1}(u_1)=n-1=1.$ Then we have $u_1u_2\in E(G_1).$ Since $d_{G_2}(u_1)=0,$ we deduce that $d_{G_2}(u_3)=1,$ and hence $u_2 u_3\in E(G_2).$ Then $P=u_1 u_2 u_3$ is a rainbow Hamilton path of $\mathcal{G},$ where $u_1 u_2\in E(G_1)$ and $u_2 u_3\in E(G_2).$
\begin{case}
    $n\geq 3.$
\end{case}
Since $d_{G_1}(u_1)=n-1\geq 2,$ there exists a vertex $u_2$ such that $u_1 u_2\in E(G_1)$ and $d_{G_2}(u_2)=n-1\geq 2.$ Then we can find an edge $u_2 u_3\in E(G_2)$ such that $u_3\neq u_1.$ Without loss of generality, let $P_{r+1}=u_1 u_2\cdots u_r u_{r+1}$ be a longest rainbow path of $\mathcal{G}$ such that $u_i u_{i+1}\in E(G_i)$ for every $i\in[r],$ where $r\geq 2.$
\begin{subcase}
    $r$ is odd.
\end{subcase}
In this case, $3\leq r \leq 2n-3.$ By $u_1\in X,$ we have $u_{2i-1}\in X$ and $u_{2i}\in Y$ for $i\in[\frac{r+1}{2}].$ Then $N_{G_r}(u_r)\subseteq Y.$ Notice that $d_{G_r}(u_r)=n-1$ and $|V(P_r)\cap Y| = \frac{r-1}{2}.$ If $3\leq r\leq 2n-5,$ then we have
\begin{eqnarray*}
    |N_{G_{r}}(u_{r})\setminus V(P_r)|=(n-1)-\frac{r-1}{2}\geq 2.
\end{eqnarray*}
Hence there exists some vertex $u'_{r+1}\notin V(P_r)$ such that $u_{r}u'_{r+1}\in E(G_{r})$ and $d_{G_{r+1}}(u'_{r+1})= n-1.$ Similarly, we have
\begin{eqnarray*}
|N_{G_{r+1}}(u'_{r+1})\setminus V(P_r)|\geq (n-1)-\frac{r+1}{2}\geq 1.
\end{eqnarray*}
Then there exists some vertex $u_{r+2}\notin V(P_r)$ such that $u'_{r+1}u_{r+2}\in E(G_{r+1}).$ Consequently, $P=u_1 u_2\cdots u_r u'_{r+1}u_{r+2}$ is a longer rainbow path of $\mathcal{G},$ which contradicts the choice of $P_{r+1}.$
Then we conclude that $r=2n-3,$ which implies that $|V(P_{2n-2})|=2n-2.$ Let $u_{2n-1}\in X$ such that $u_{2n-1}\notin V(P_{2n-2}).$ Since $G_{2n-2}\cong T^0_n$ and $d_{G_{2n-2}}(u_1)=0,$ we have $d_{G_{2n-2}}(u_{2n-2})=n-1.$ It follows that
\begin{eqnarray*}
    |N_{G_{2n-2}}(u_{2n-2})\cap V(P_{2n-2})|\leq n-2<n-1.
\end{eqnarray*}
One can deduce that $N_{G_{2n-2}}(u_{2n-2})\nsubseteq V(P_{2n-2}),$  which contradicts Lemma \ref{lem2.9}.
\begin{subcase}
    $r$ is even.
\end{subcase}
In this case, $2\leq r\leq 2n-2.$ By $u_1\in X,$ we have $u_{2i+1}\in X$ and $u_{2i}\in Y$ for every $i\in[\frac{r}{2}].$
If $2\leq r\leq 2n-6,$ then we have
\begin{eqnarray*}
    |N_{G_{r}}(u_{r})\setminus V(P_r)|\geq (n-1)-\frac{r}{2}\geq 2.
\end{eqnarray*}
Hence there exists some vertex $u''_{r+1}$ such that $u_{r}u''_{r+1}\in E(G_{r})$ and $d_{G_{r+1}}(u''_{r+1})= n-1.$ Similarly, we have
\begin{eqnarray*}
    |N_{G_{r+1}}(u''_{r+1})\setminus V(P_r)|\geq (n-1)-\frac{r}{2}\geq 2.
\end{eqnarray*}
Then there exists some vertex $u_{r+2}\notin V(P_{r+1})$ such that $u''_{r+1}u_{r+2}\in E(G_{r+1}).$ Furthermore, we can obtain that $P=u_1 u_2\cdots u_r u''_{r+1}u_{r+2}$ is a longer rainbow path of $\mathcal{G},$ contradicting the choice of $P_{r+1}.$
Then $r\geq2n-4.$ If $r=2n-2,$ then $P_{2n-2}=u_1 u_2\cdots u_{2n-2}u_{2n-1}$ is a rainbow Hamilton path of $\mathcal{G}.$ If $r=2n-4,$ then $|V(P_{2n-3})|=2n-3.$ Let $u_{2n-2},u_{2n-1}\in [2n-1]\setminus V(P_{2n-3}).$ Without loss of generality, suppose that $u_{2n-1}\in X$ and $u_{2n-2}\in Y.$ We claim that $d_{G_{2n-3}}(u_{2n-3})=0.$ Otherwise, we have $d_{G_{2n-3}}(u_{2n-3})=n-1.$ Since $u_{2n-3}\in X,$ we can obtain that
\begin{eqnarray*}
    |N_{G_{2n-3}}(u_{2n-3})\cap V(P_{2n-3})|\leq|Y\cap V(P_{2n-3})|=n-2<n-1.
\end{eqnarray*}
This implies that $N_{G_{2n-3}}(u_{2n-3})\nsubseteq V(P_{2n-3}),$ contrary to Lemma \ref{lem2.9}. Then there is an edge $u_{2n-2}u_{2n-1}\in E(G_{2n-3}).$ Since $d_{G_{2n-2}}(u_1)=0$ and $G_{2n-2}\cong T^0_n,$ there is an edge $u_{2n-3}u_{2n-2}\in E(G_{2n-2}).$ Hence $P=u_1u_2\cdots u_{2n-3}u_{2n-2}u_{2n-1}$ is a rainbow Hamilton path of $\mathcal{G}.$
\setcounter{case}{0}
\end{proof}
Now we are in a position to present the proof of Theorem \ref{thm1.5}.

\medskip
\noindent  \textbf{Proof of Theorem \ref{thm1.5}.}
For $n=2$ and $i\in [2],$ if $\rho(G_i)\geq\rho(T^0_2),$ then we have $G_i\cong K_2\cup K_1=T^0_2$ or $G_i\cong K_{2,1}.$ Hence $T^0_2$ is a subgraph of $G_i.$ Without loss of generality, suppose that $X=\{u_1, u_3\},$ $Y=\{u_2\}$ and $|E(G_1)|\geq |E(G_2)|.$ If $|E(G_1)|=1,$ then $G_1\cong G_2\cong T^0_2.$ Suppose that $\mathcal{G}$ has no rainbow Hamilton path. By Lemma \ref{lem4.2}, $G_1=G_2\cong T^0_2.$ If $|E(G_1)|=2,$ then $G_1\cong K_{2,1}.$  We arbitrarily choose an edge, say $u_1u_2,$ in $G_2.$ Since $u_2u_3\in E(G_1),$ $P=u_1u_2u_3$ is a rainbow Hamilton path of $\mathcal{G}.$

For $n\geq 3,$ we suppose to the contrary that $\mathcal{G}=\{G_1, G_2, \ldots,$ $ G_{2n-2}\}$ has no rainbow Hamilton path. For convenience, we denote $S_i=S(G_i)$ for every $ i\in [2n-2].$ It follows from Lemma \ref{lem2.3} that the bi-shifted family $\{S_1,$ $S_2,$ $\ldots,$ $S_{2n-2}\}$ has no rainbow Hamilton path. By Lemma \ref{lem2.1} and $\rho(G_i)\geq \rho(T^0_n)$, we have
\begin{eqnarray}\label{eq::2}
    \rho(S_i)\geq \rho(G_i)\geq \rho(T^0_n).
\end{eqnarray}
Next we show that $S_i\cong T^0_n$ for every $i\in[2n-2].$ Without loss of generality, we suppose that $X=[n-1]$ and $Y=[2n-1]\setminus[n-1].$ Define $e_j=\{j, 2n-j\},$ where $1\leq j\leq n-1.$
\begin{claim}\label{claim::4}
$\{e_2, e_3, \ldots, e_{n-1}\}\subseteq E(S_i)$ for each $i\in [2n-2].$
\end{claim}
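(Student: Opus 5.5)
The plan is to copy the argument of Claim \ref{claim::2} in the proof of Theorem \ref{thm1.4}: a missing edge $e_p$ forces $S_q$ inside one of the graphs $T^{k}_n$ with $1\leq k\leq n-2$, and Lemma \ref{lem2.8}(i) then contradicts (\ref{eq::2}). The only new ingredient is matching the part sizes to $T^{k}_n$.

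Suppose the claim fails. Then there exist $2\leq p\leq n-1$ and $q\in[2n-2]$ with $e_p=\{p,2n-p\}\notin E(S_q)$. Since $S_q$ is bi-shifted, Observation \ref{obs::1} applies. Take any edge $\{i,j\}\in E(S_q)$ with $i\in X=[n-1]$ and $j\in Y=\{n,\ldots,2n-1\}$. If $i\geq p$ and $j\geq 2n-p$, the observation would give $e_p\in E(S_q)$, which is false. Hence every edge $\{i,j\}$ of $S_q$ satisfies $1\leq i\leq p-1$ or $n\leq j\leq 2n-p-1$.

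Next I identify the graph containing all such edges with $T^{p-1}_n$. Recall $T^k_n=K_{k,n-k-1}\sqcup\widehat{K_{n-k-1,k+1}}$. Write it with parts $X_1,Y_1$ (sizes $k$ and $n-k-1$) from $K_{k,n-k-1}$ and $X_2,Y_2$ (sizes $n-k-1$ and $k+1$) from the quasi-complement. Its edges are then all of $X_1$--$Y_1$, all of $X_1$--$Y_2$, and all of $X_2$--$Y_1$, with nothing between $X_2$ and $Y_2$.

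Take $k=p-1$ and set
\begin{itemize}
\item[] $X_1=[p-1]$, $X_2=\{p,\ldots,n-1\}$, of sizes $p-1$ and $n-p$;
\item[] $Y_1=\{n,\ldots,2n-p-1\}$, $Y_2=\{2n-p,\ldots,2n-1\}$, of sizes $n-p$ and $p$.
\end{itemize}
With these choices, every edge of $S_q$ lies in $T^{p-1}_n$. So $S_q$ is a subgraph of $T^{p-1}_n$, and by monotonicity of the spectral radius under subgraphs, $\rho(S_q)\leq\rho(T^{p-1}_n)$.

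Since $2\leq p\leq n-1$, we have $1\leq p-1\leq n-2$, so Lemma \ref{lem2.8}(i) gives $\rho(T^{p-1}_n)<\rho(T^0_n)$. Hence $\rho(S_q)<\rho(T^0_n)$, contradicting (\ref{eq::2}), and the claim follows.

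The only step needing care is the bookkeeping: matching the ranges of indices to the part sizes of $T^{k}_n$, in particular that $|Y|=n$, that $e_j$ pairs $j$ with $2n-j$, and that the boundary case $p=n-1$ (i.e.\ $k=n-2$) is still covered by Lemma \ref{lem2.8}(i). Once that identification holds, the rest is immediate.
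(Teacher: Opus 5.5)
Your proposal is correct and follows the paper's proof essentially verbatim. Like the paper, you assume $e_p\notin E(S_q)$, use Observation~\ref{obs::1} to force every edge $\{i,j\}$ of $S_q$ to satisfy $i<p$ or $n\le j<2n-p$, embed $S_q$ in $T^{p-1}_n$, and contradict (\ref{eq::2}) via Lemma~\ref{lem2.8}(i). Your explicit choice of parts $X_1,X_2,Y_1,Y_2$ with sizes $p-1,\,n-p,\,n-p,\,p$ just fills in bookkeeping the paper leaves implicit, and you correctly take $q\in[2n-2]$ where the paper has the typo $[2n-1]$.
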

\begin{proof}
Assume that Claim \ref{claim::4} does not hold. Then there exist some $2\leq p \leq n-1$ and some $q\in [2n-1]$ such that $e_p=\{p, 2n-p\} \notin E(S_q).$ Since $S_q$ is bi-shifted, by Observation \ref{obs::1}, we can obtain that $1\leq i<p$ or $n\leq j <2n-p$ for each edge $\{i, j\}\in E(S_q).$ This implies that $S_q$ is a subgraph of $T^{p-1}_n.$ Hence $\rho(S_q)\leq \rho(T^{p-1}_n).$ By Lemma \ref{lem2.8}, we obtain that
\begin{eqnarray*}
    \rho(S_q)\leq \rho(T^{p-1}_n)<\rho(T^0_n),
\end{eqnarray*}
which contradicts (\ref{eq::2}).
\end{proof}
If $d_{S_i}(2n-1)=0$ for each $i\in [2n-2],$ then $S_i$ is a subgraph of $T^0_n.$ By (\ref{eq::2}) and Lemma \ref{lem2.8.0}, we have $S_i\cong T^0_n.$ Now we assume that there exists some $p\in [2n-2]$ such that $d_{S_p}(2n-1)>0.$ Recall that $S_p$ are bi-shifted. By Observation \ref{obs::1}, we conclude that $e_1=\{1, 2n-1\}\in E(S_p).$ Without loss of generality, we assume that $p=1.$ It follows from Claim \ref{claim::4} that $e_j\in E(S_j)$ for $2\leq j\leq n-1.$ Note that $e_2=\{2, 2n-2\}\in E(S_{n})$ and $e_j=\{j, 2n-j\}\in E(S_{n+j-1})$ for $2\leq j\leq n-1.$ By Observation \ref{obs::1}, $\{1, 2n-2\}\in E(S_{n})$ and $\{j, 2n-j-1\}\in E(S_{n+j-1}).$ Then $\{S_1, S_2, \ldots, S_{2n-2}\}$ has a rainbow Hamilton path, a contradiction. Hence $S_i\cong T^0_n$ for $i\in [2n-2].$

Based on (\ref{eq::2}), we have $\rho(S_i)=\rho(G_i)=\rho(T^0_n).$ According to Corollary \ref{cor4.1}, we can obtain that $G_i\cong T^0_n.$ Assume that there exist $p,q\in [2n-1]$ such that $G_p\neq G_q.$ By Lemma \ref{lem4.2}, $\mathcal{G}$ has a rainbow Hamilton path, a contradiction. Hence $G_1=G_2=\cdots=G_{2n-2}$ and $G_1\cong T^0_n.$
\hspace*{\fill}$\Box$

\section{Proof of theorem \ref{thm1.6}}
Recall that $B^1_n= K_{1, n-1}\sqcup \widehat{K_{n-1, 1}}.$ In this section, we present the proof of Theorem \ref{thm1.6}.
\begin{lemma}\label{lem5.1}
Let $G$ be a balanced bipartite graph on vertex set $[2n]$ and the bipartition $(X, Y),$ where $n\geq 4.$ Suppose that $\{u, v\} \subseteq X$ or $\{u, v\} \subseteq Y$ with $u<v.$ If $\rho (G) \geq \rho(B^1_n)$ and $S_{uv}(G)\cong B^1_n,$ then $G\cong B^1_n.$
\end{lemma}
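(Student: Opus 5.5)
The plan is to follow the pattern of Lemmas \ref{lem3.1} and \ref{lem4.1}, but with one difference. Here $B^1_n$ is connected: it is $K_{n,n-1}$, with parts $X$ and $Y_1=Y\setminus\{y^*\}$, together with a pendant vertex $y^*\in Y$ joined to the unique vertex $x_1\in X$ of degree $n$. So the argument cannot be organized around an isolated vertex. Instead I would split according to whether $u$ and $v$ are isolated in $G$, and whether $G$ is connected. Throughout, recall that $|E(G)|=|E(S_{uv}(G))|=|E(B^1_n)|=n(n-1)+1$.

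\emph{Step 1: one of $u,v$ isolated.} If $d_G(v)=0$, the shift does nothing and $G=S_{uv}(G)\cong B^1_n$. If $d_G(u)=0$, then $S_{uv}$ moves every edge at $v$ onto $u$. Thus $S_{uv}(G)$ is obtained from $G$ by swapping the labels $u$ and $v$, and again $G\cong S_{uv}(G)\cong B^1_n$. Hence we may assume $d_G(u)>0$ and $d_G(v)>0$.

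\emph{Step 2: a spectral gap.} I would show $\rho(B^1_n)>\sqrt{n(n-1)}$. The graph $K_{n,n-1}\cup K_1$ is a proper spanning subgraph of the connected graph $B^1_n$, so the Perron--Frobenius theorem gives
\[
\rho(B^1_n)>\rho(K_{n,n-1})=\sqrt{n(n-1)}.
\]

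\emph{Step 3: reduce to a single nontrivial component.} Let $C$ be a component of $G$ with $\rho(C)=\rho(G)$. Combining Lemma \ref{lem2.5} with Step 2 gives
\[
\sqrt{|E(C)|}\geq\rho(C)=\rho(G)\geq\rho(B^1_n)>\sqrt{n(n-1)}.
\]
Therefore $|E(C)|\geq n(n-1)+1=|E(G)|$, so $C$ contains every edge of $G$ and all other components are isolated vertices. Since $d_G(u),d_G(v)>0$, both $u$ and $v$ lie in $C$.

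\emph{Step 4: apply Lemma \ref{lem2.2}.} The shift $S_{uv}$ acts only inside $C$, so $S_{uv}(G)=S_{uv}(C)\cup(\text{isolated vertices})$. If $S_{uv}(C)\neq C$, Lemma \ref{lem2.2} applied to the connected graph $C$ gives
\[
\rho(B^1_n)=\rho(S_{uv}(G))=\rho(S_{uv}(C))>\rho(C)=\rho(G)\geq\rho(B^1_n),
\]
which is a contradiction. Hence $S_{uv}(C)=C$, so $G=S_{uv}(G)\cong B^1_n$.

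The main obstacle is Step 3. Unlike in Lemma \ref{lem3.1}, the fact that $S_{uv}(G)$ is connected does not force $G$ to be connected, so we must rule out a disconnected $G$ with large spectral radius. The strict gap of Step 2, combined with the edge count, is what handles this. As far as I can see, $n\geq4$ is not needed by this argument; it is presumably imposed for consistency with Corollary \ref{cor1.3}.
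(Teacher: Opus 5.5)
Your proof is correct, but it obtains connectivity of $G$ by a different mechanism than the paper.

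\emph{Comparison of routes.} The paper works inside the structure of $B^1_n$. It names the pendant vertex $w$ of $S_{uv}(G)$. It disposes of the cases where $\{u,v\}$ avoids $w$, or lies in the other part, by asserting $G=S_{uv}(G)$. For $v=w$ it rules out $u$ or $v$ being isolated, deduces that $G$ is connected, and applies Lemma~\ref{lem2.2}. You skip all position cases: the strict gap $\rho(B^1_n)>\sqrt{n(n-1)}$, Nosal's bound and the exact count $|E(G)|=n(n-1)+1$ force one component to carry every edge. This is uniform in $u,v$ and needs no unique pendant vertex, so it works for all $n\geq 2$. It does lean on the coincidence that $|E(B^1_n)|$ exceeds $\rho(K_{n,n-1})^2$ by exactly one.

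\emph{A simpler route.} Neither device is actually needed, and your ``main obstacle'' is not one, because a shift never merges components. Since $S_{uv}(G)\cong B^1_n$ has no isolated vertex, $N_G(u)\cap N_G(v)=N_{S_{uv}(G)}(v)\neq\emptyset$. So $u$ and $v$ lie in one component of $G$. Every edge of $S_{uv}(G)$ is either an edge of $G$ or of the form $uz$ with $z\in N_G(v)$, so it lies inside a component of $G$. Connectivity of $S_{uv}(G)$ therefore forces $G$ to be connected. This also shows your Step 1 cases are vacuous. Lemma~\ref{lem2.2} together with $\rho(G)\geq\rho(S_{uv}(G))$ then gives $G\cong B^1_n$ directly.

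\emph{Correction to Step 4.} Lemma~\ref{lem2.2} gives strict inequality unless $C\cong S_{uv}(C)$, not unless $C=S_{uv}(C)$. If $\emptyset\neq N_G(u)\subsetneq N_G(v)$, the shift merely swaps $u$ and $v$. For example, take $G\cong B^1_n$ with $u$ the pendant vertex and $v$ a degree-$n$ vertex in the same part. This satisfies all hypotheses, yet $S_{uv}(G)\neq G$ and the spectral radius is unchanged. So you should conclude $C\cong S_{uv}(C)$ and hence $G\cong S_{uv}(G)\cong B^1_n$. The assertion $G=S_{uv}(G)$ is false in general, though this is harmless for the lemma.
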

\begin{proof}
Note that $S_{uv}(G)\cong B^1_n.$ Denote by $w$ the vertex of degree $1$ in $S_{uv}(G).$ Without loss of generality, we assume that $w\in X.$ Let $X_1=X\setminus \{w\}.$ If $\{u, v\} \subseteq X_1,$ then we have $G=S_{uv}(G)\cong B^1_n.$ If $u=w$ or $v=w,$ then by $u<v,$ we have $u\in X_1$ and $v=w.$ Next we show that $d_G(u)>0$ and $d_G(v)>0.$ Suppose to the contrary that $d_G(u)=0$ or $d_G(v)=0.$ Then $d_{S_{uv}(G)}(u)=0$ or $d_{S_{uv}(G)}(v)=0,$ contradicting $S_{uv}(G)\cong B^1_n.$ Then $d_G(u)>0$ and $d_G(v)>0,$ and hence $G$ is connected. Note that
\begin{eqnarray*}
\rho(G)\geq \rho(B^1_n)=\rho(S_{uv}(G)).
\end{eqnarray*}
By Lemma \ref{lem2.2}, we have $\rho(G)=\rho(S_{uv}(G))$ and $G\cong S_{uv}(G)\cong B^1_n.$ If $\{u, v\} \subseteq Y,$ then we can obtain that $G= S_{uv}(G)\cong B^1_n.$
\end{proof}
Combining Lemmas \ref{lem2.1} and \ref{lem5.1}, we have the following result.
\begin{cor}\label{cor5.1}
Let $G$ be a balanced bipartite graph on vertex set $[2n]$ and the bipartition $(X, Y),$ where $n\geq 4.$ If $\rho (G) \geq \rho(B^1_n)$ and $S(G)\cong B^1_n,$ then $G\cong B^1_n.$
\end{cor}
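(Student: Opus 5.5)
The plan is to follow the template of Corollaries \ref{cor3.1} and \ref{cor4.1}. The bi-shifted graph $S(G)$ is produced from $G$ by a finite sequence of shifts. Write this as $G=H_0, H_1=S_{u_1v_1}(H_0), \ldots, H_m=S_{u_mv_m}(H_{m-1})=S(G)$. Here each pair $\{u_t,v_t\}$ lies in one part of the bipartition and $u_t<v_t$. Every $H_t$ is again a balanced bipartite graph on $[2n]$ with the same bipartition $(X,Y)$, because a shift inside one part preserves bipartiteness.

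First, I apply Lemma \ref{lem2.1} along the sequence to get the chain
\begin{eqnarray*}
\rho(G)=\rho(H_0)\leq \rho(H_1)\leq \cdots \leq \rho(H_m)=\rho(S(G))=\rho(B^1_n).
\end{eqnarray*}
Combined with the hypothesis $\rho(G)\geq \rho(B^1_n)$, all these spectral radii are equal to $\rho(B^1_n)$. In particular, $\rho(H_t)\geq \rho(B^1_n)$ for every $t$.

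Next, I run a backward induction on $t$ from $m$ down to $0$, with the inductive claim that $H_t\cong B^1_n$. The base case $t=m$ is the hypothesis $S(G)\cong B^1_n$. For the inductive step, suppose $H_t=S_{u_tv_t}(H_{t-1})\cong B^1_n$. Since $\rho(H_{t-1})\geq \rho(B^1_n)$ and $n\geq 4$, Lemma \ref{lem5.1} applied to $H_{t-1}$ with the pair $\{u_t,v_t\}$ gives $H_{t-1}\cong B^1_n$. At $t=0$ this yields $G\cong B^1_n$.

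The only point needing care is that the hypotheses of Lemma \ref{lem5.1} hold at every step. We need the same vertex set $[2n]$, the same bipartition, a pair inside one part with $u_t<v_t$, and the spectral bound; all of these follow from the first step. The real work (connectivity of $G$ and the strict inequality from Lemma \ref{lem2.2}) is already inside Lemma \ref{lem5.1}, so I expect no substantive obstacle here.
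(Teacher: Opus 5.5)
Your proposal is correct and follows the paper's approach: the paper proves Corollary~\ref{cor5.1} in one line, ``combining Lemmas~\ref{lem2.1} and~\ref{lem5.1}'', and your sequence of shifts $H_0,\ldots,H_m$ with non-decreasing spectral radii, followed by backward induction using Lemma~\ref{lem5.1}, is exactly what that sentence means. At each step you only need $\rho(H_{t})\geq\rho(G)\geq\rho(B^1_n)$, which your chain already gives, so deducing that all the spectral radii are equal is correct but not needed.
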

\begin{lemma}\label{lem5.2}
Let $\mathcal{G}=\{G_1, G_2, \ldots , G_{2n}\}$ be a family of balanced bipartite graphs on vertex set $[2n]$ and the bipartition $(X, Y),$ where $n \geq 2.$ If $G_i\cong B^1_n$ for every $i\in [2n],$ and there exist $p, q\in [2n]$ such that $G_p \neq G_q,$ then $\mathcal{G}$ admits a rainbow Hamilton cycle.
\end{lemma}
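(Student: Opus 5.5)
The plan is to build the cycle around the pendant vertex of one of the graphs. First I record the structure of $B^1_n$. Write $B^1_n=K_{1,n-1}\sqcup\widehat{K_{n-1,1}}$ with $X=\{w\}\cup X_2$ and $Y=\{z\}\cup Y_1$, where $|X_2|=|Y_1|=n-1$. Its edges are all of $w$--$Y_1$, the edge $wz$, and all of $X_2$--$Y_1$. So $B^1_n$ is $K_{n,n}$ with the edges from $z$ to $X_2$ deleted. It has exactly one vertex of degree $1$, namely $z$, whose unique neighbour $w$ has degree $n$. Every other vertex has degree $n-1$ or $n$, and $B^1_n-wz\cong Q^0_n$.

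The key consequence is that a copy of $B^1_n$ on $[2n]$ with the fixed bipartition is uniquely determined by the ordered pair $(z,w)$ (pendant vertex, its neighbour). For each $i$ let $(z_i,w_i)$ be this pair for $G_i$. Since $G_p\ne G_q$, the pairs are not all equal.

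Step 1 fixes the two cycle edges at one vertex. Relabel so that $q=2n$ and $p=1$, and put $z=z_{2n}$, $a=w_{2n}$, so that $za\in E(G_{2n})$. I claim $z$ has a neighbour $b\ne a$ in $G_1$:
\begin{itemize}
\item if $z_1=z$, then $w_1\ne a$ because the pairs differ, and we take $b=w_1$;
\item if $z_1\ne z$, then $d_{G_1}(z)\ge n-1\ge 2$, so such a $b$ exists.
\end{itemize}
This reduces the problem to finding a rainbow Hamilton path from $a$ to $b$ on $V'=[2n]\setminus\{z\}$, using the $2n-2$ graphs $G_2,\dots,G_{2n-1}$ restricted to $V'$. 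The path has $2n-1$ vertices and $2n-2$ edges, $a,b\in Y\setminus\{z\}$ or $a,b\in X$ (both lie on the side opposite $z$), and closing it with $za\in G_{2n}$ and $zb\in G_1$ gives the rainbow Hamilton cycle. The side containing $z$ shrinks to $n-1$ vertices, and the path alternates sides starting and ending on the larger side, as required.

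Step 2 builds that path greedily. Each $G_i-z$ has minimum degree at least $n-2$ except at the single vertex $z_i$ (when $z_i\ne z$). I would start at $a$ and extend a longest rainbow path $a=u_1u_2\cdots u_{r+1}$ with $u_ju_{j+1}\in E(G_{j+1})$. At each step I choose the next vertex among at least two fresh neighbours, so as to avoid the at most one "bad" pendant vertex $z_{j+2}$ of the next colour, and also to avoid $b$ until the very end. The same counting as in Case 2 of Lemma~\ref{lem3.2} (fresh neighbours $\ge (n-2)-\lfloor r/2\rfloor\ge 2$ for $r\le 2n-6$) shows the path reaches length $2n-5$ or so. The cases $n=2,3$ I would settle by direct inspection, as in Case 1 of Lemma~\ref{lem3.2}.

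The main obstacle is the last three or four steps, because the endpoint $b$ is prescribed and Lemma~\ref{lem2.9} no longer suffices. Here I would use that every graph is nearly complete bipartite. Among the remaining two or three uncovered vertices together with $b$, every $G_j$ contains all edges between them except those at its pendant $z_j$ or between $z_j$ and $X\setminus\{w_j\}$. I would do a short case analysis on whether $z_j$ lies among the final vertices for the last two or three colours. If it does, I would swap the order of the final colours (the colours are interchangeable since we only need some bijection), or reroute through $w_j$. If that fails, I would reselect the colour assigned to $za$ or $zb$: any graph $G_j$ with $z_j\ne z$ gives $z$ degree at least $n-1$, allowing a different choice of $b$.
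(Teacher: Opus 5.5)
Your proposal has a genuine gap. The Step 1 reduction can fail, and the endgame that would have to repair it is only gestured at.

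\textbf{Why Step 1 can fail.} A rainbow Hamilton $(a,b)$-path on $V'=[2n]\setminus\{z\}$ in the colours $G_2,\dots,G_{2n-1}$ need not exist. Take $n\ge 3$ and let:
\begin{itemize}
\item $G_1$ and $G_{2n}$ be the copies of $B^1_n$ with pendant $z$ and neighbours $b$ and $a\neq b$, respectively;
\item $G_2=\cdots=G_{2n-1}$ be the copy with pendant $b$ and neighbour $z$.
\end{itemize}
With the choice $p=1$, $q=2n$, which your Step 1 allows, you are forced to take $b=w_1$. But $b$ is isolated in every $G_i-z$, $2\le i\le 2n-1$, so the required path does not exist.

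Yet $\mathcal{G}$ does have a rainbow Hamilton cycle. Take $bz\in E(G_2)$, then $zx\in E(G_3)$ for some $x\neq b$ on $b$'s side, then $by\in E(G_1)$ for some $y\neq z$. Finish with an $(x,y)$-path inside $V\setminus\{z,b\}$, where $G_4,\dots,G_{2n}$ all induce the same complete $K_{n-1,n-1}$.

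So the fallback of ``reselecting the colour assigned to $za$ or $zb$'' is not a corner case; it carries the argument. You give neither a rule for choosing $z,a,b$ and the two closing colours, nor a check that the last three or four steps then succeed.

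\textbf{Smaller holes in Step 2.} The first edge out of $a$ in colour $G_2$ is impossible when $z_2=a$ and $w_2=z$. Also, on $b$'s side you must avoid both $b$ and $z_{j+2}$ simultaneously, which a bound of two fresh neighbours does not allow.

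\textbf{The missing idea.} You never need to route a path to a prescribed endpoint. Put $p=2n$. The family $\{G_1,\dots,G_{2n-1}\}$ satisfies the hypothesis of Theorem~\ref{thm1.4}, since $Q^0_n\subseteq B^1_n$. It is not the exceptional family, since $B^1_n\not\cong Q^0_n$. So it already has a rainbow Hamilton path $u_1u_2\cdots u_{2n}$ with $u_iu_{i+1}\in E(G_i)$.

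The only non-edges between $X$ and $Y$ in a copy of $B^1_n$ are incident to its pendant vertex. Hence either $u_1u_{2n}\in E(G_{2n})$ and you are done, or, after reversing the path, $u_1$ is the pendant of $G_{2n}$ with neighbour $u_s$. In the latter case $G_{2n}$ contains every edge between $X$ and $Y$ that avoids $u_1$.
\begin{itemize}
\item For $s\ge 4$, either recolour, or rotate once. To recolour: if $u_1u_{2n}\in E(G_{s-1})$, use $G_{s-1}$ for the closing edge and give $u_{s-1}u_s$ the colour $G_{2n}$. Otherwise rotate to $u_1\cdots u_{s-1}u_{2n}u_{2n-1}\cdots u_su_1$, placing the freed colours $G_{s-1}$ and $G_{2n}$ on $u_{s-1}u_{2n}$ and $u_1u_s$ in the appropriate order.
\item For $s=2$, a similar short swap/rotation argument works, using $G_q\neq G_{2n}$.
\end{itemize}
This is the paper's proof. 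Reusing the path theorem and fixing the closing edge locally makes the delicate endgame of your greedy construction unnecessary.
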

\begin{proof}
Suppose that $p=2n.$ Since $Q^0_n$ is a subgraph of $B^1_n,$ we have $\rho(Q^0_n)\leq \rho(B^1_n).$ By Theorem \ref{thm1.4}, $\{G_1, G_2, \ldots , G_{2n-1}\}$ has a rainbow Hamilton path, denoted as $P=u_1u_2\cdots u_{2n-1}u_{2n}.$ Suppose that $u_{2i-1}\in X$ and $u_{2i}\in Y$ for each $i\in[n].$ Let $u_iu_{i+1}\in E(G_i)$ for $ i\in [2n-1]$ by reordering the graphs $G_1, G_2, \ldots , G_{2n-1}$ if necessary. If $u_1u_{2n}\in E(G_{2n}),$ then $C=u_1u_2\cdots u_{2n-1}u_{2n}u_1$ is a rainbow Hamilton cycle of $\mathcal{G}.$ Next we assume that $u_1u_{2n}\notin E(G_{2n}).$

For $n=2,$ we have $u_iu_{i+1}\in E(G_{4}),$ where $i\in[3].$ Since $G_q\neq G_4,$ it is clear that $u_1u_4\in E(G_q).$ We choose $u_qu_{q+1}\in E(G_4).$ Then $C=u_1u_2u_3u_4u_1$ is a rainbow Hamilton cycle of $\mathcal{G}.$

For $n\geq 3,$ since $G_{2n} \cong B^1_n$ and $u_1u_{2n}\notin E(G_{2n}),$ we have $d_{G_{2n}}(u_1)=1$ or $d_{G_{2n}}(u_{2n})=1.$ Without loss of generality, we assume that $d_{G_{2n}}(u_1)=1$ and $u_1u_s\in E(G_{2n}),$ where $2\leq s\leq 2n-2.$ Next we consider two cases according to the different values of $s$.
\begin{case}
$s=2.$
\end{case}
\begin{claim}\label{claim::6}
If there exists some $j\in [2n-1]$ such that $u_1u_{2n}\in E(G_j),$ then $\mathcal{G}$ admits a rainbow Hamilton cycle.
\end{claim}
\begin{proof}
Since $G_{2n}\cong B^1_n$ and $u_1u_2\in E(G_{2n}),$ we have $u_ju_{j+1}\in E(G_{2n}).$ Note that $u_1u_{2n}\in E(G_j).$ Then $C=u_1u_2\cdots u_{2n-1}u_{2n}u_1$ is a rainbow Hamilton cycle of $\mathcal{G}.$
\end{proof}
Based on Claim \ref{claim::6}, we only consider the case $u_1u_{2n}\notin E(G_i)$ for each $i\in [2n].$ This means that $d_{G_i}(u_1)=1$ or $d_{G_i}(u_{2n})=1.$
\begin{subcase}
    $d_{G_i}(u_1)=1$ for each $i\in [2n-1].$
\end{subcase}
Recall that $G_{2n}\neq G_q.$ Suppose that $u_1u_k\in E(G_q)$ and $u_k\neq u_2.$ Let $G'_i$ be the graph induced on $V(G_i)\setminus \{u_1\}$ in $G_i.$ Then we have $G'_1=G'_2=\cdots=G'_{2n}\cong K_{n.n-1}.$ Hence we can find a rainbow $(u_2,u_k)$-path $P'$ with $2n-1$ vertices in $\{G_1, G_2, \ldots , G_{q-1}, G_{q+1}, \ldots G_{2n-1}\}$ such that $u_1\notin V(P').$ By choosing edges $u_1u_2$ in $G_{2n}$ and $u_1u_k$ in $G_q$, we can obtain a rainbow Hamilton cycle in $\mathcal{G}.$
\begin{subcase}
    $d_{G_j}(u_1)\neq 1$ for some $j\in[2n-1].$
\end{subcase}
Note that $G_j\neq G_{2n}.$ Without loss of generality, we assume that $j=q.$ Then $d_{G_q}(u_1)=n-1$ and $d_{G_q}(u_{2n})=1.$ Now we show that $C=u_1u_2u_3u_{2n}u_{2n-1}\cdots u_4u_1$ is a rainbow Hamilton cycle of $\mathcal{G}.$ Recall that $d_{G_i}(u_1)=1$ or $d_{G_i}(u_{2n})=1$ for each $i\in [2n].$ If $2\leq q\leq 2n-2,$ then we have $u_qu_{q+1}\in E(G_3).$ Note that $u_1u_4\in E(G_q)$ and $u_3u_{2n}\in E(G_{2n}).$ Then $C$ is a rainbow Hamilton cycle of $\mathcal{G}.$ Suppose that $q=1$ or $q=2n-1.$ Based on the proof above, we only consider the case $d_{G_i}(u_1)=1$ for $2\leq i\leq 2n-2.$ We choose $u_3u_{2n}\in E(G_3),$ $u_1u_4\in E(G_q)$ and $u_qu_{q+1}\in E(G_{2n}).$ Hence $C$ is a rainbow Hamilton cycle of $\mathcal{G}.$

\begin{case}
   $4\leq s\leq 2n-2.$
\end{case}
Recall that $d_{G_{2n}}(u_1)=1$ and $G_{i}\cong B^1_n$ for each $i\in [2n].$ Then we have $u_{s-1}u_s\in E(G_{2n}).$ If $u_1u_{2n}\in E(G_{s-1}),$ then $C=u_1u_2\cdots u_{2n-1}u_{2n}u_1$ is a rainbow Hamilton cycle of $\mathcal{G}.$ Consequently, we assume that $u_1u_{2n}\notin E(G_{s-1}),$ which implies that $d_{G_{s-1}}(u_1)=1$ or $d_{G_{s-1}}(u_{2n})=1.$ Next we show that the cycle $C=u_1u_2\cdots u_{s-1}u_{2n}u_{2n-1}\cdots u_su_1$ is a rainbow Hamilton cycle of $\mathcal{G}.$ If $u_{s-1}u_{2n}\in E(G_{s-1}),$ then we choose $u_1u_s\in E(G_{2n}),$ and hence $C$ is a rainbow Hamilton cycle of $\mathcal{G}.$ If $u_{s-1}u_{2n}\notin E(G_{s-1}),$ then $d_{G_{s-1}}(u_{2n})=1,$ which implies that $u_1u_s\in E(G_{s-1}).$ One can choose $u_{s-1}u_{2n}\in E(G_{2n}).$ Then $C$ is a rainbow Hamilton cycle of $\mathcal{G}.$
\setcounter{case}{0}
\end{proof}
Now we are in a position to present the proof of Theorem \ref{thm1.6}.

\medskip
\noindent  \textbf{Proof of Theorem \ref{thm1.6}.}
Note that $Q^0_n$ is a proper subgraph of $B^1_n$ and $B^1_n$ is connected. Then we have $\rho(B^1_n)>\rho(Q^0_n).$ For $n=2,$ by Lemma \ref{lem2.5}, we have $\rho(G_i)\leq \sqrt{|E(G_i)|}.$ Then $|E(G_i)|\geq \rho^2(B^1_2)>\rho^2(Q^0_2)=2,$ which implies that $|E(G_i)|\geq 3.$ Hence $G_i\cong B^1_2$ or $G_i\cong K_{2,2}.$ Then $B^1_2$ is a subgraph of $G_i.$ If $|E(G_i)|=3$ for every $i\in[4],$ then $G_i\cong B^1_2.$ Suppose that $\mathcal{G}$ has no rainbow Hamilton cycle. By Lemma \ref{lem5.2}, $G_1=G_2=G_3=G_4\cong B^1_2.$ If $|E(G_j)|=4$ for some $j\in[4],$ then $G_j\cong K_{2,2}.$ Without loss of generality, assume that $j=1.$ Note that $B^1_2$ is a subgraph of $G_i$ for each $i\in [4].$ Let $G'_i\cong B^1_2$ be a subgraph of $G_i$ with $2\leq i\leq 4.$ Since $G_1\cong K_{2,2},$ there exist two subgraphs $G'_1$ and $G''_1$ of $G_1$ such that $G'_1\cong B^1_2,$ $G''_1\cong B^1_2$ and $G'_1\neq G''_1.$ Then $G'_1\neq G'_2$ or $G''_1\neq G'_2.$ By Lemma \ref{lem5.2}, $\{G'_1, G'_2, G'_3, G'_4\}$ or $\{G''_1, G'_2, G'_3, G'_4\}$ has a rainbow Hamilton cycle, which implies that so does $\mathcal{G}.$

For $n=3,$ by Lemma \ref{lem2.5}, we have $|E(G_i)|\geq 7$ for each $i\in[6].$ We claim that $B^1_3$ is a subgraph of $G_i.$ Denote by $H$ the graph obtained from $K_{3,3}$ by deleting two edges and $H\ncong B^1_3.$ If $|E(G_i)|= 7,$ then $G_i\cong B^1_3$ or $H.$ Note that $H$ is connected and $S(H)\cong B^1_3$. By Lemma \ref{lem2.2} and $H\ncong B^1_3$, we have $\rho(B^1_3)=\rho(S(H))>\rho(H).$ It follows from $\rho(G_i)\geq \rho(B^1_3)$ that $G_i\ncong H,$ which implies that $G_i\cong B^1_3.$ If $|E(G_i)|\geq 8,$ then $B^1_3$ is a proper subgraph of $G_i.$ Hence $B^1_3$ is a subgraph of $G_i.$ Using the same argument as the case of $n=2,$ we can also obtain that $\mathcal{G}$ admits a rainbow Hamilton cycle unless $G_1=G_2=\ldots = G_6\cong B^1_3.$

For $n\geq 4,$ we assume that $\mathcal{G}=\{G_1, G_2, \ldots,$ $ G_{2n}\}$ has no rainbow Hamilton cycle. For convenience, we denote $S_i=S(G_i)$ for $1\leq i\leq 2n.$ According to Lemma \ref{lem2.4}, the bi-shifted family $\{S_1,$ $S_2,$ $\ldots,$ $S_{2n}\}$ has no rainbow Hamilton cycle. By Lemma \ref{lem2.1} and $\rho(G_i)\geq \rho(B^1_n)$, we have
\begin{eqnarray}\label{eq::5}
    \rho(S_i)\geq \rho(G_i)\geq \rho(B^1_n).
\end{eqnarray}
\begin{claim}\label{claim::7}
For each $i\in [2n],$ $S_i$ is connected.
\end{claim}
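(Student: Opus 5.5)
We argue by contradiction, using only \eqref{eq::5} and elementary spectral facts. Suppose some $S_i$ is disconnected. Since $S_i$ is bi-shifted, every component that is not an isolated vertex has a simple structure. By Observation \ref{obs::1}, if vertex $1\in X$ has degree $0$ in $S_i$, then every $x\in X$ has degree $0$, so $S_i$ is empty; this is impossible because $\rho(S_i)>0$. Hence $1\in X$ and, symmetrically, $n+1\in Y$ (the smallest vertex of $Y$) are non-isolated. Observation \ref{obs::1} also shows that every non-isolated $x\in X$ is adjacent to $n+1$, and every non-isolated $y\in Y$ is adjacent to $1$. Moreover $1\sim n+1$. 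Therefore all non-isolated vertices lie in a single component, namely the one containing $1$ and $n+1$. A disconnected $S_i$ thus has the form $S_i\cong H\cup tK_1$ with $t\geq 1$, where $H$ is a connected bipartite graph whose part sizes $(a,b)$ satisfy $a\leq n$, $b\leq n$ and $a+b\leq 2n-1$.

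It follows that $S_i$ is a subgraph of $K_{n,n-1}\cup K_1=Q^0_n$ or of $K_{n-1,n}\cup K_1$, which is isomorphic to $Q^0_n$. Indeed, if an isolated vertex lies in $X$, all edges of $S_i$ avoid it. Consequently $\rho(S_i)\leq \rho(Q^0_n)=\sqrt{n(n-1)}$.

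We then compare with $B^1_n$. As already noted at the start of the proof of Theorem \ref{thm1.6}, $Q^0_n$ is a proper subgraph of the connected graph $B^1_n$. By Perron--Frobenius, $\rho(B^1_n)>\rho(Q^0_n)$. Combining this with \eqref{eq::5} gives
$$\rho(Q^0_n)\geq \rho(S_i)\geq \rho(B^1_n)>\rho(Q^0_n),$$
a contradiction. Hence every $S_i$ is connected.

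The only delicate step is the first one: deducing from bi-shiftedness that all non-isolated vertices lie in one component. This needs Observation \ref{obs::1} applied with $x_1=1$ or $y_1=n+1$ (the minimal labels, after fixing $X=[n]$ and $Y=[2n]\setminus[n]$ as in the earlier proofs). Once that is in place, the argument reduces to containment in $Q^0_n$ and the strict inequality $\rho(B^1_n)>\rho(Q^0_n)$.
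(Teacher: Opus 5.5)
Your proof is correct. Its skeleton is the same as the paper's: bound $\rho(S_i)$ above by $\rho(Q^0_n)$, then use $\rho(Q^0_n)<\rho(B^1_n)$ to contradict (\ref{eq::5}). The difference is in how you get the first bound.

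The paper simply asserts $\rho(S_i)\leq\rho(K_{n,n-1})$. This holds for \emph{every} disconnected bipartite graph whose parts $X,Y$ both have size $n$, whether shifted or not. The component achieving the spectral radius has parts of sizes $a,b\leq n$ with $a+b\leq 2n-1$. Its spectral radius is therefore at most $\rho(K_{a,b})=\sqrt{ab}\leq\sqrt{n(n-1)}$.

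You instead use bi-shiftedness, via Observation \ref{obs::1} applied at the minimal labels of $X$ and $Y$. This shows that all edges lie in one component, which forces an isolated vertex. You then get a genuine subgraph containment in a copy of $Q^0_n$, so only monotonicity of $\rho$ under subgraphs is needed.

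Your route costs an extra structural step and relies on a hypothesis the claim does not actually need. In exchange, it spells out the justification the paper leaves implicit, and it records that a disconnected bi-shifted graph has the form $H\cup tK_1$. The paper's route is shorter and works without any shifting.
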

\begin{proof}
Suppose to the contrary that $S_i$ is not connected. Then
\begin{eqnarray*}
    \rho(S_i)\leq \rho(K_{n,n-1})=\rho(Q^0_n)<\rho(B^1_n),
\end{eqnarray*}
which contradicts (\ref{eq::5}) .
\end{proof}
Next we show that $S_i\cong B^1_n$ for every $i\in[2n].$ Without loss of generality, suppose $X=[n]$ and $Y=[2n]\setminus[n].$ Define $e_j=\{j, 2n-j+2\},$ where $2\leq j\leq n.$
\begin{claim}\label{claim::8}
$\{e_3, e_4, \cdots, e_{n-1}\}\subseteq E(S_i)$ for each $i\in[2n].$
\end{claim}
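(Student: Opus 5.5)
We argue by contradiction, in the same way as Claims \ref{claim::2} and \ref{claim::4}. Suppose that for some $3\leq p\leq n-1$ and some $q\in[2n]$ we have $e_p=\{p,2n-p+2\}\notin E(S_q)$. Since $S_q$ is bi-shifted, Observation \ref{obs::1} shows that every edge $\{i,j\}\in E(S_q)$ with $i\in X=[n]$ and $j\in Y$ satisfies $i<p$ or $j<2n-p+2$. Otherwise $i\geq p$ and $j\geq 2n-p+2$, and then $e_p$ would be an edge.

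Hence $S_q$ is a subgraph of the graph in which the first $p-1$ vertices of $X$ are joined to all of $Y$, the first $n-p+1$ vertices of $Y$ are joined to all of $X$, and the remaining $n-p+1$ vertices of $X$ are not adjacent to the remaining $p-1$ vertices of $Y$. This graph is exactly $K_{p-1,n-p+1}\sqcup\widehat{K_{n-p+1,p-1}}=B^{p-1}_n$. Setting $k=p-1$ gives $2\leq k\leq n-2$, so Lemma \ref{lem2.8}(ii) yields
\begin{eqnarray*}
\rho(S_q)\leq \rho(B^{p-1}_n)<\rho(B^1_n),
\end{eqnarray*}
which contradicts (\ref{eq::5}). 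This proves the claim.

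The only delicate step is checking the index bookkeeping. The labelling $e_j=\{j,2n-j+2\}$ pairs the $j$-th vertex of $X$ with the $(n-j+2)$-th vertex of $Y$. For the missing edge $e_p$, the vertices of $Y$ below $2n-p+2$ are $n+1,\ldots,2n-p+1$, which number $n-p+1$. So the containing graph has parameters $(k,n-k)$ with $k=p-1$, and it is indeed $B^{p-1}_n$ and not a shifted variant such as $Q^k_n$. The range $3\leq p\leq n-1$ is exactly what keeps $k$ inside $[2,n-2]$, where Lemma \ref{lem2.8}(ii) applies. The endpoint edges $e_2$ and $e_n$ are excluded because they would give $k=1$ or $k=n-1$, where no strict inequality is available.
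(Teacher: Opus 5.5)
Your proof is correct and is essentially the paper's argument: assume some $e_p$ with $3\le p\le n-1$ is missing from $S_q$, use Observation~\ref{obs::1} to place $S_q$ inside $B^{p-1}_n$, and apply Lemma~\ref{lem2.8}(ii) to contradict (\ref{eq::5}). Your index check, which identifies the containing graph as $B^{p-1}_n$ with $2\le p-1\le n-2$, is accurate and spells out what the paper states in one line.
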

\begin{proof}
If Claim \ref{claim::8} does not hold, then there exist some $3\leq p \leq n-1$ and some $q\in [2n]$ such that $e_p=\{p, 2n-p+2\} \notin E(S_q).$ Note that $S_q$ is bi-shifted. By Observation \ref{obs::1}, for every edge $\{i, j\}\in E(S_q),$  one can obtain that $1\leq i<p$ or $n+1\leq j <2n+2-p.$ Then $S_q$ is a subgraph of $B^{p-1}_n,$ which implies that $\rho(S_q)\leq \rho(B^{p-1}_n).$ By Lemma \ref{lem2.8},
\begin{eqnarray*}
    \rho(S_q)\leq \rho(B^{p-1}_n)<\rho(B^1_n),
\end{eqnarray*}
which contradicts (\ref{eq::5}).
\end{proof}
By Claim \ref{claim::7}, we have $\delta(S_i)\geq 1.$ If $d_{S_i}(2n)=1$ for each $i\in[2n],$ then $S_i$ is a subgraph of $B^1_n.$ It follows from (\ref{eq::5}) that $\rho(S_i)=\rho(B^1_n).$ Since $B^1_n$ is connected, we have $S_i\cong B^1_n$ for each $i\in[2n].$ Similarly, if $d_{S_i}(n)=1$ for each $i\in[2n],$ then we have $S_i\cong B^1_n.$ Next we assume that there exist $p, q\in [2n]$ such that $d_{s_p}(n)>1$ and $d_{s_q}(2n)>1.$ Recall that $S_p$ and $S_q$ are bi-shifted. By Observation \ref{obs::1}, we can obtain that $e_n=\{n, n+2\}\in E(S_p)$ and $e_2=\{2, 2n\}\in E(S_q).$ We divide the following proof into two cases.
\begin{case}\label{ca1}
    $p\neq q.$
\end{case}
Without loss of generality, we assume that $p=n$ and $q=2.$ Then $e_n\in E(S_n)$ and $e_2\in E(S_2).$  By $\delta(S_i)\geq 1$ and Observation \ref{obs::1}, we have $\{1, n+1\}\in E(S_1),$ $\{1, 2n\}\in E(S_{n+1})$ and $\{n, n+1\}\in E(S_{2n}).$ By Claim \ref{claim::8}, we choose $e_j\in E(S_j)$ for $3\leq j\leq n-1.$ Note that $e_j=\{j, 2n-j+2\}\in E(S_{n+j})$ for $3\leq j\leq n-1.$ Then we also obtain that $\{j, 2n-j+1\}\in E(S_{n+j})$ for $3\leq j\leq n-1.$ If $\{2, 2n-1\}\in E(S_{n+2}),$ then $\{S_1, S_2, \ldots, S_{2n}\}$ has a rainbow Hamilton cycle, a contradiction. If $\{2, 2n-1\}\notin E(S_{n+2}),$ then by Observation \ref{obs::1}, we have $d_{S_{n+2}}(2n-1)=d_{S_{n+2}}(2n)=1.$ Hence $S_{n+2}$ is a proper subgraph of $B^1_n.$ Since $B^1_n$ is connected, we have $\rho(S_{n+2})<\rho(B^1_n),$ which contradicts (\ref{eq::5}).
\begin{case}
    $p=q.$
\end{case}
Without loss of generality, we assume that $p=q=1.$ We first prove that $d_{S_i}(n)=1$ and $d_{S_i}(2n)=1$ for each $2\leq i\leq 2n.$ In fact, if there exists some $2\leq j\leq 2n$ such that $d_{S_j}(n)>1$ or $d_{S_j}(2n)>1,$ then we obtain that $d_{S_1}(n)>1$ and $d_{S_j}(2n)>1$, or $d_{S_1}(2n)>1$ and $d_{S_j}(n)>1.$ By case \ref{ca1}, $\{S_1, S_2, \ldots, S_{2n}\}$ has a rainbow Hamilton cycle, a contradiction. Hence $S_i$ is a proper subgraph of $B^1_n.$ Since $B^1_n$ is connected, we have $\rho(S_i)<\rho(B^1_n),$ which contradicts (\ref{eq::5}).

Therefore, $S_i\cong B^1_n$ for each $i\in [2n].$ Combining (\ref{eq::5}), we have $\rho(S_i)=\rho(G_i)=\rho(B^1_n).$ By Corollary \ref{cor5.1}, $G_i\cong B^1_n.$ If there exist $p,q\in [2n]$ such that $G_p\neq G_q,$ then by Lemma \ref{lem5.2}, $\mathcal{G}$ has a rainbow Hamilton cycle, a contradiction. Hence $G_1=G_2=\cdots=G_{2n}$ and $G_1\cong B^1_n.$
\hspace*{\fill}$\Box$

\vspace{5mm}
\noindent
{\bf Declaration of competing interest}
\vspace{3mm}

The authors declare that they have no known competing financial interests or personal relationships that could have appeared to influence the work reported in this paper.

\vspace{5mm}
\noindent
{\bf Data availability}
\vspace{3mm}

No data was used for the research described in this paper.



\end{document}